\newcommand{\dtext}{\textnormal d}
\newcommand{\onto}{\xrightarrow[]{{}_{\!\!\textnormal{onto\,\,}\!\!}}}
\newcommand{\R}{\mathbb{R}}						
\newcommand{\C}{\mathbb{C}}						
\renewcommand{\S}{\mathbb{S}}					
\newcommand{\D}{\mathbb{D}}						
\newcommand{\X}{\mathbb{X}}						
\newcommand{\Y}{\mathbb{Y}}
\newcommand{\eps}{\varepsilon}					
\newcommand{\dd}								
	{\mathop{}\!\mathrm{d}}						
\newcommand{\ddn}[1]							
	{\mathop{}\!\mathrm{d^{#1}}}
\newcommand{\abs}[1]							
	{\left| #1 \right|}
\newcommand{\smallabs}[1]						
	{\lvert #1 \rvert}	
\newcommand{\norm}[1]							
	{\left\lVert #1 \right\rVert}	
\newcommand{\smallnorm}[1]						
	{\lVert #1 \rVert}						
\newcommand{\ip}[2]								
	{\left< #1 , #2 \right>}
\DeclareMathOperator{\id}{id}					
\DeclareMathOperator{\capac}{Cap}			
\DeclareMathOperator{\vmo}{VMO}
\DeclareMathOperator{\sym}{Sym}
\let\Re\relax									
\let\Im\relax
\DeclareMathOperator{\Re}{Re}
\DeclareMathOperator{\Im}{Im}					
\newcommand{\loc}{\mathrm{loc}}
\DeclareMathOperator{\osc}{osc}
\newtheorem{thm}{Theorem}[section]{\bf}{\it}
\newtheorem{lemma}[thm]{Lemma}
\newtheorem{prop}[thm]{Proposition}
\newtheorem{cor}[thm]{Corollary}
\theoremstyle{definition}
\newtheorem{defn}[thm]{Definition}
\newtheorem{example}[thm]{Example}
\theoremstyle{remark}
\newtheorem{rem}[thm]{Remark}
\numberwithin{equation}{section}
\begin{document}

\title[Analytic characterization  of  monotone Hopf-harmonics]{Analytic characterization  of\\ monotone Hopf-harmonics}

\author[I. Kangasniemi]{Ilmari Kangasniemi}
\address{Department of Mathematics, Syracuse University, Syracuse,
NY 13244, USA }
\email{kikangas@syr.edu}

\author[A. Koski]{Aleksis Koski}
\address{Department of Mathematics and Statistics, P.O. Box 68 (Pietari Kalmin katu 5), FI-00014 University of Helsinki, Finland}
\email{aleksis.koski@helsinki.fi}

\author[J. Onninen]{Jani Onninen}
\address{Department of Mathematics, Syracuse University, Syracuse,
NY 13244, USA and  Department of Mathematics and Statistics, P.O.Box 35 (MaD) FI-40014 University of Jyv\"askyl\"a, Finland
}
\email{jkonnine@syr.edu}
\thanks{A. Koski was supported by the Academy of Finland Grant number 307023.
J. Onninen was supported by the NSF grant  DMS-1700274.}

\subjclass[2020]{Primary 31C45; Secondary 35J25, 58E20, 74B20, 46E35}


\keywords{Hopf-Laplace equation, holomorphic quadratic differentials, inner-variational equations,  monotone mappings, orientation-preserving Sobolev mappings,  the principle of non-interpenetration of matter. }

\begin{abstract} 
We study solutions of the inner-variational equation associated with the Dirichlet energy in the plane, given homeomorphic Sobolev boundary data.
We prove that such a solution is  monotone if and only if its Jacobian determinant  does not change sign. These solutions, called {\it monotone Hopf-harmonics}, are a natural alternative to harmonic homeomorphisms.  Examining the topological behavior of a solution (not a priori monotone) on the trajectories of Hopf quadratic  differentials plays a sizable role in our arguments.
\end{abstract}

\maketitle

\section{Introduction}
A fundamental problem in the theory of Nonlinear Elasticity (NE)~\cite{Anb, Bac, Cib} and Geometric Function Theory (GFT)~\cite{AIMb, HKb, IMb, Reb, Rib} is to determine what analytic data on a given mapping $h \colon \X \to \Y$ provides topological information on the deformation that $h$ represents. An exemplary result of this type is the theorem of Reshetnyak~\cite{Re} which states that the continuous representative of a quasiregular mapping is either constant or both open and discrete. His remarkable theorem hence  gives topological conclusions from analytic assumptions.  

Along the same lines, we study under what conditions an inner variatiational  minimizer of the Dirichlet energy is monotone,
given homeomorphic boundary data. An \emph{inner variation} of a map $h$ is a change of independent variable $h_\varepsilon(x) = h(x + \varepsilon \eta)$, where $\eta \in  C^\infty_0 (\X, \mathbb C)$. On the other hand, a continuous map $h \colon \X \to \C$ is \emph{monotone} if $h^{-1}\{y\}$ is connected for every $\{y\} \in \C$. The definition of monotone maps is due to Morrey~\cite{Mor}, and is purely topological in nature. 

The inner variational minimizers of Dirichlet energy satisfy the \emph{Hopf-Laplace equation}, named in recognition of Hopf's work~\cite{Ho}. The Hopf-Laplace equation is a second-order partial differential equation for maps $h \colon \X \to \C$ on a planar domain $\X \subset \C$, given by
\begin{equation}\label{eq:hopf-harmonic}
	\frac{\partial}{ \partial \bar z} \left(h_z \, \overline{h_{\bar z}}\right)=0 \, ,  \qquad  h\in  W^{1,2}_{\loc}({\X}, \mathbb C).
\end{equation}
The Hopf-Laplace equation is also known as the \textit{energy-momentum} or \textit{equilibrium} equation, etc.~\cite{Cob, SSe, Ta}.  In continuum mechanics the inner variation is often called a {\it domain variation}~\cite{GS, Heb, HPb, KSb}. 

A complex-valued harmonic mapping $h=u+iv$ always solves the Hopf-Laplace equation. Conversely, homeomorphic solutions to the Hopf-Laplace equation~\eqref{eq:hopf-harmonic} are harmonic~\cite{IKOhopf}. Arbitrary solutions, however, may behave surprisingly wildly. Indeed, Iwaniec, Verchota and Vogel~\cite{IVV} constructed a nonconstant piecewise orthogonal mapping $ h \colon  \overline{\X} \to \mathbb C$ vanishing on $\partial \X$  whose Hopf product $h_z \overline{h_{\bar z}}$ is identically zero. Such wild solutions are often undesirable in NE and GFT; hence, applications of the Hopf-Laplace equation generally consider only continuous and monotone solutions of \eqref{eq:hopf-harmonic}, which are  called \emph{monotone Hopf-harmonics}. These solutions only allow for \emph{weak interpenetration of matter}; roughly speaking, squeezing of a portion of the material to a point may occur, but not folding or tearing.

In NE and GFT, an axiomatic assumption for the deformations $h \colon \X \to \mathbb C$ studied is that 
\begin{equation}\label{eq:jacob}
	J_h(x)=\det Dh(x) \ge 0 \qquad  \textnormal{ almost everywhere (a.e.)}
\end{equation}
Particularly in elasticity theory, this constraint is of utmost importance, since it implies that no volume of material is turned ``inside out''; an undesirable property which we refer to as \emph{strong interpenetration of matter}. Our main result tells that, if $h$ is a solution of the Hopf-Laplace equation with homeomorphic boundary values in the Sobolev sense, then the monotonicity of $h$ can be characterized purely by~\eqref{eq:jacob}.

\begin{thm}\label{thm:main}
	Let $\X, \Y \subset \C$ be (simply connected) Jordan domains with $\Y$ Lipschitz, and let $f \colon \overline{\X} \onto \overline{\Y}$ be an orientation-preserving homeomorphism in $W^{1,2} (\X, \C)$. Suppose that a mapping $h \colon \overline{\X} \to \C$ satisfies the Hopf-Laplace equation~\eqref{eq:hopf-harmonic} and that $h\in f+W^{1,2}_0(\X, \C)$. Then the following conditions are equivalent:
	\begin{enumerate}
		\item\label{enum:main_thm_jacobian} $J_h(x) \ge 0$ a.e. in $\X$;
		\item\label{enum:main_thm_monotone} $h$ is continuous up to the boundary of $\X$, $h$ maps $\overline{\X}$ onto $\overline{\Y}$, and the resulting mapping $h \colon \overline{\X} \onto \overline{\Y}$ is monotone.
	\end{enumerate}
\end{thm}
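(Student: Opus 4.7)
The two implications rely on very different ingredients: the plan is to dispatch $(2)\Rightarrow(1)$ first by a degree-and-area argument, and then to attack the harder direction $(1)\Rightarrow(2)$ through a structural analysis of the Hopf quadratic differential $\varphi = h_z\,\overline{h_{\bar z}}$.

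For $(2)\Rightarrow(1)$, assume $h\colon\overline{\X}\onto\overline{\Y}$ is continuous and monotone, extending the orientation-preserving homeomorphism $f$. Since $h-f\in W^{1,2}_0(\X,\C)$ and $f$ is an orientation-preserving homeomorphism between Jordan domains, the Sobolev degree formula gives $\int_\X J_h\,dx = \vol(\Y)$. On the other hand, for a continuous monotone $W^{1,2}$ surjection, the fiber $h^{-1}\{y\}$ is a continuum, and the coarea formula forces it to be a single point for a.e.\ $y\in\Y$. Thus the multiplicity function equals $1$ a.e., and the area formula yields $\int_\X |J_h|\,dx = \vol(\Y)$ as well. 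Comparing the two identities forces $J_h = |J_h|\ge 0$ a.e.

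For $(1)\Rightarrow(2)$, the crucial fact is that $\varphi$ is holomorphic on $\X$ by~\eqref{eq:hopf-harmonic}. If $\varphi\equiv 0$, then $|h_z||h_{\bar z}|=0$ combined with $J_h\ge 0$ forces $h_{\bar z}\equiv 0$, so $h$ is holomorphic; the conclusion then follows by classical Schwarz-type arguments from the orientation-preserving boundary data. Assume $\varphi\not\equiv 0$. Its zero set $Z(\varphi)$ is then discrete, and in any simply connected subdomain of $\X\setminus Z(\varphi)$ I would introduce the natural coordinate $\zeta$ with $d\zeta^2 = \varphi\,dz^2$. A short Wirtinger computation reduces the Hopf--Laplace equation to the pointwise identity
\[
 h_\zeta\,\overline{h_{\bar\zeta}} = 1 \qquad \text{a.e.,}
\]
which, writing $\zeta=\xi+i\eta$, says $h_\xi\perp h_\eta$ and $|h_\xi|^2-|h_\eta|^2 = 4$. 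Under $J_h\ge 0$ one further obtains $|h_\zeta|\ge 1\ge|h_{\bar\zeta}|$ together with $\arg h_\zeta = \arg h_{\bar\zeta}$, so that $h$ moves strictly monotonically (with unit speed) along each horizontal trajectory of $\varphi$ and monotonically, but possibly with vanishing speed, along each vertical one.

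The next stage is to transfer this trajectory-wise rigidity into global structure. Using the Lipschitzness of $\partial\Y$, the Sobolev matching $h-f\in W^{1,2}_0(\X,\C)$, and an oscillation estimate along trajectories approaching $\partial\X$, I would produce a continuous extension of $h$ to $\overline\X$ coinciding with $f$ on $\partial\X$; a degree argument then gives $h(\overline{\X})=\overline{\Y}$. The decisive and hardest step is the connectedness of each fiber $h^{-1}\{y\}$: arguing by contradiction, two components could be joined by a Jordan arc along which $h\equiv y$, built from a chain of horizontal trajectories passing through bifurcation points in $Z(\varphi)$; a local analysis of admissible configurations of the Hopf foliation at a zero, combined with the Jordan curve theorem applied to $\partial\Y=f(\partial\X)$, yields the desired contradiction. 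The main obstacle is precisely this global topological step: propagating the local trajectory-wise behavior through the singular points of $\varphi$ with only $W^{1,2}$ regularity at hand, and handling possible accumulation of zeros of $\varphi$ at $\partial\X$.
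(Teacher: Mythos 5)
Your plan for $(2)\Rightarrow(1)$ takes a genuinely different route from the paper, which instead quotes the approximation theorem of Iwaniec--Onninen (a monotone $W^{1,2}$ map with homeomorphic boundary data is a strong $W^{1,2}$-limit of homeomorphisms) to conclude $J_h$ cannot change sign. Your area/degree comparison $\int_\X J_h = \int_\X \abs{J_h}$ is a reasonable alternative, but the chain you sketch has an unaddressed technical point: the area formula $\int_\X \abs{J_h}\,dx = \int N(y,h,\X)\,dy$ needs the Lusin~(N) condition, which is not automatic for continuous $W^{1,2}$ surjections in the plane. Also, you invoke ``the coarea formula'' where you mean the area formula. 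These are likely fillable, but the paper's approximation route sidesteps both issues cleanly.

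For $(1)\Rightarrow(2)$, the local trajectory analysis in the natural coordinate is in the right spirit (and matches the discussion in Section~\ref{subsect:hopf_harm_prelims}, modulo a numerical slip: $\abs{h_\xi}\geq 2$, not unit speed, once $\varphi$ is normalized to~$1$). However, the decisive step --- connectedness of the fibers --- is where your plan does not hold together. You propose to argue that two components of $h^{-1}\{y\}$ ``could be joined by a Jordan arc along which $h\equiv y$, built from a chain of horizontal trajectories.'' This cannot work for two reasons. First, $h$ is \emph{strictly monotone} (never constant) along horizontal trajectories by your own computation; it is along \emph{vertical} trajectories, and only where $J_h\equiv 0$, that $h$ is constant (Lemma~\ref{lem:Hopf_harm_vert_traj_constant}). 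Second, and more fundamentally, if you produce an arc in $h^{-1}\{y\}$ joining the two components they were never disconnected; the contradiction structure is circular. The paper's actual argument is quite different: it uses the degree theory developed for Sobolev boundary data (Lemmas~\ref{lem:degree_in_preimage}--\ref{lem:degree_equality lemma}) to reduce fiber connectedness to the single statement that no nonempty component of a ball preimage $h^{-1}B$ can have $J_h\equiv 0$; and it then proves this last fact (Lemma~\ref{lem:zero_Jacobian_trajectory_lemma}) by following a \emph{vertical} trajectory of $\varphi$ through such a component, observing $h$ is constant on it, using $\varphi\in L^1(\D)$ and Strebel's theorem to conclude the trajectory limits on two distinct boundary points $z_a\neq z_b\in\S^1$, and finally deriving $f(z_a)=f(z_b)$ via the capacity lemma (Lemma~\ref{lem:capacity_to_topology_lemma}), contradicting that $f$ is a boundary homeomorphism. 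This use of $L^1$-integrability of the Hopf differential and the global trajectory-endpoint theorem is the key idea you are missing. You also reverse the logical order relative to the paper: continuity up to $\partial\X$ is proved \emph{after} fiber connectedness and in fact \emph{uses} it (Proposition~\ref{prop:general_boundary_extension} and Lemma~\ref{lem:weak_monotonicity}), whereas your sketch tries to establish boundary continuity first via an unspecified oscillation estimate; Example~\ref{ex:2} shows that without fiber connectedness, boundary continuity genuinely fails. Finally, you do not mention the interior Lipschitz regularity of Iwaniec--Kovalev--Onninen, which is essential just to know $h$ is continuous in $\D$ and satisfies Lusin~(N) before any of the topological machinery can be started.
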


First, we point out that under the assumptions of Theorem~\ref{thm:main} there always exists  a monotone Hopf-harmonic $h \colon \overline{\X} \onto \overline{\Y}$ which coincides with the given $f$ on the boundary of $\X$, see~\cite{Iwaniec-Onninen_Hopf-Harmonics}.  Second, the existence of the  assumed boundary homeomorphism $f \colon \overline{\X} \onto \overline{\Y}$  can be equivalently formulated purely in terms of the boundary map $f \colon \partial \X \onto \partial \Y$, see Section~\ref{sec:traceSobolevmaps}. On the other hand, without the restriction of homeomorphic boundary behavior, the solutions to the Hopf-Laplace equation with nonnegative Jacobian need not be monotone. A simple example of this is the power map $h \colon \overline{\mathbb D} \onto \overline{\mathbb D}$, $h(z)=z^2$, where $J_h \geq 0$ and $h_z \overline{h_{\bar z}} \equiv 0$ but $h$ is not monotone. Furthermore, the assumption that $h$ is a solution of the Hopf-Laplace equation is also essential, as illustrated by the following two examples, the details of which are given in Section \ref{sect:counterexamples}.

\begin{example}\label{ex:1}
	There is a continuous $h \colon \D \to \C$ with $h  \in \id + W^{1,2}_0(\D, \C)$ and $J_h \geq 0$ a.e.\ such that $h(\D) \setminus \overline{\D} \not= \emptyset$ and $h$ fails to be continuous up to the boundary.
\end{example}

\begin{example}\label{ex:2}
	There is a continuous $h \colon \overline{\D} \to \overline{\D}$ with $h \in \id + W^{1,2}_0(\D, \C)$ and $J_h \geq 0$ a.e.\ such that $h^{-1}(y)$ is not connected for a point $y \in \D$.
\end{example}

\subsection{Inner variational problems} A major motivation for the study of inner-variational equations is in energy minimization problems for classes of homeomorphisms or mappings with nonnegative Jacobian, a type of problem common in GFT and NE.  In particular, monotone Hopf-harmonics  become an important resource in both theories in cases where the harmonic extension fails to be injective. This inadequacy in fact occurs for every nonconvex  target domain $\Y$. Indeed, for any such $\Y$ there exists a homeomorphism $f$ from the boundary of the unit disk onto $\partial \Y$ whose harmonic extension fails to be a homeomorphism. Moreover, $f$  takes points in the unit disk $\D$ beyond $\overline{\Y}$ and strong interpenetration of matter hence occurs~\cite{AN, Ch}; see Figure \ref{fig:banaanitarget} for an illustration of this.

\begin{figure}[htbp]
	\centering
	\includegraphics[width=0.45\textwidth]
	{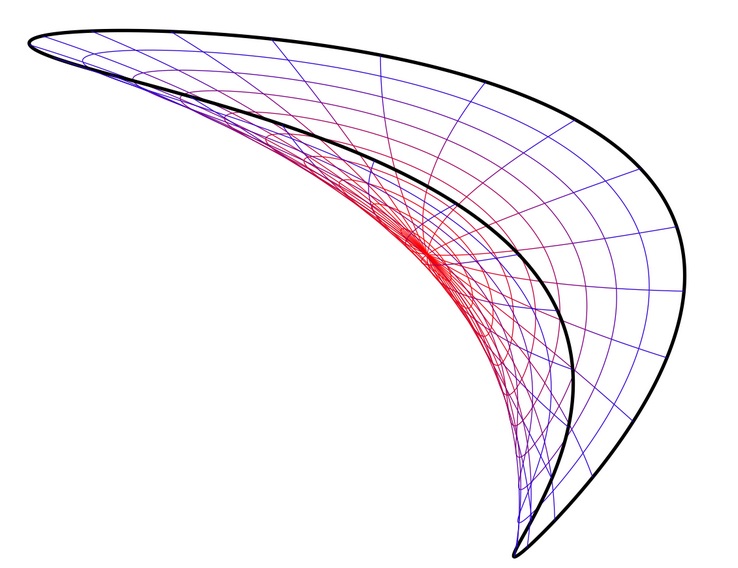}
	\caption{The images of circles and rays of the unit disk $\D$ under the harmonic map $x+iy \mapsto (1-x)^2-y^2+1.6(x+y)+i(x^2-(2+y)^2-x-2y)$ which has homeomorphic boundary data. The folding occurs outside $\overline{ \Y}$.}
	\label{fig:banaanitarget}
\end{figure}

 A  natural approach to avoid such intolerable behavior and obtain a mapping which resembles harmonic homeomorphisms is to minimize the  Dirichlet energy subject to only homeomorphisms. Let $\X, \Y \subset \C$ be Jordan domains with $\Y$ Lipschitz, and $f \colon \overline{ \X }\onto \overline{\Y}$ an orientation-preserving homeomorphism in $W^{1,2} (\X, \mathbb C)$.  Consider  the infimum of the Dirichlet integral
\begin{equation}\label{eq:dirichlet}
	\mathsf E_{_\X}[h] = \int_\mathbb X \abs{Dh(x)}^2 \, \dtext x,
\end{equation}
among the  class of admissible homeomorphism $h$, given by
\[
\begin{split}
	\mathcal H_f^{2} (\overline{\X}, \overline{\Y}) = \{ &h \colon \overline{\X} \onto \overline{\Y} \colon h \textnormal{ a homeomorphism in } W^{1,2}(\X, \C) \textnormal{ s.t. } \\ & h=f \textnormal{ on } \partial \X  \} \,.
\end{split}
\]

The class $\mathcal H_f^{2}(\overline{\X}, \overline{\Y})$ is not closed with respect to this minimization problem. This raises the question of how to properly enlarge the class of  Sobolev  homeomorphisms. In general enlarging the class of admissible deformations can change the nature of the minimization problem. It may result in the infimum of the energy functional changing (the \emph{Lavrentiev gap}) and affect whether or not the infimum is attained.

\subsubsection{The class $\mathcal M_f^{2} (\overline{\X}, \overline{\Y}) $}

The smallest extension of $\mathcal H_f^{2}(\overline{\X}, \overline{\Y})$ that is closed under the minimization problem is given by the class
\[
\begin{split}
	\mathcal M_f^{2} (\overline{\X}, \overline{\Y}) = \{ &h \colon \overline{\D} \onto \overline{\Y} \colon h \textnormal{ a monotone mapping in } W^{1,2}(\X, \C) \textnormal{ s.t. } \\ & h=f \textnormal{ on } \partial \X  \} \, .
\end{split}
\]
of monotone Sobolev maps. Indeed, an infimizing sequence of mappings $h_j \in \mathcal M_f^{2} (\overline{\X}, \overline{\Y})$ converges uniformly and in $W^{1,2}(\X, \C)$ to a map $h \in \mathcal M_f^{2} (\overline{\X}, \overline{\Y})$; see \cite[Remark 3.1]{Iwaniec-Onninen_Hopf-Harmonics}. Moreover, by a Sobolev variant of Youngs' approximation result~\cite{Youngs}, a monotone $h \in \mathcal M_f^{2} (\overline{\X}, \overline{\Y})$ can be approximated with homeomorphisms $h_j \in \mathcal H_f^{2}(\overline{\X}, \overline{\Y})$ strongly in $W^{1,2} (\X, \C)$; see \cite[Theorem 1.3]{IOmono}. Hence, a minimizer of \eqref{eq:dirichlet} exists in $\mathcal M_f^{2} (\overline{\X}, \overline{\Y})$, and there is no {Lavrentiev gap} between the classes $\mathcal M_f^{2} (\overline{\X}, \overline{\Y})$ and $\mathcal H_f^{2}(\overline{\X}, \overline{\Y})$; that is,
\begin{equation}\label{eq:monomin}
	\min_{ h \in \mathcal M_f^{2}(\overline{\X} , \overline{\mathbb Y}) } \mathsf E_{\X}[h]   = \inf_{ h \in \mathcal H_f^{2}(\overline{\X} , \overline{\mathbb Y}) } \mathsf E_{\X}[h] \,  .
\end{equation}

Since one can perform inner variations in the class $\mathcal M_f^{2} (\overline{\X}, \overline{\Y})$, the minimizer $h$ in~\eqref{eq:monomin}  solves the Hopf-Laplace equation \eqref{eq:hopf-harmonic}. Conversely, it was shown in \cite[Proposition 3.4]{Iwaniec-Onninen_Hopf-Harmonics} that a solution of the Hopf-Laplace equation in $\mathcal M_f^{2} (\overline{\X}, \overline{\Y})$ is always an energy minimizer. Thus, the minimizers of the Dirichlet energy in $\mathcal M_f^{2} (\overline{\X}, \overline{\Y})$ are exactly the monotone Hopf-harmonics in $\mathcal M_f^{2} (\overline{\X}, \overline{\Y})$.   
Furthermore, any weak interpenetration of matter under a monotone Hopf-harmonic energy minimizer occurs precisely where the minimizer fails to be harmonic, i.e.\ where it does not  satisfy the Euler-Lagrange equation; see~\cite{IOinv}.

\subsubsection{The class $\mathcal A^2_f (\overline{\X}, \C)$} In mathematical models of NE  one typically allows  the class of competing deformations to be as large as possible. Only physically inappropriate  mappings are excluded, most notably disallowing any strong interpenetration of matter.   This leads us  to  another possible enlargement  of the class $\mathcal H_f^{2}(\overline{\X}, \overline{\Y})$,   given by
\[
\begin{split}
	\mathcal A^2_f (\overline{\X}, \C)= \{ & h \colon \overline{\X} \to \C \colon h\in W^{1,2} (\X, \C), \, J(x,h) \ge 0 \textnormal{ a.e. }  \textnormal{ and } \\
	& h \in  f + W_0^{1,2} (\X, \C)\} \, .
\end{split}
\]
 The class $\mathcal A^2_f (\overline{\X}, \C)$ contains $\mathcal M_f^{2} (\overline{\X}, \overline{\Y})$, but also far more irregular maps. For instance, a mapping in $ \mathcal A^2_f (\overline{\X}, \C) $ need not be continuous in $\X$, and as stated in Examples~\ref{ex:1} and~\ref{ex:2}, such mappings may be non-monotone or may take points in $\X$ beyond $\overline{\Y}$.

The class $\mathcal A^2_f (\overline{\X}, \C)$ has a minimizer $h_\circ$ for the Dirichlet energy \eqref{eq:dirichlet}. This is because $\mathcal A^2_f (\overline{\X} , \C)$ is closed under weak limits in  $W^{1,2} (\X, \C)$,  see \cite[Corollary 8.4.1]{IMb}. Moreover, since $\mathcal A^2_f (\overline{\X} , \C)$ is also closed under inner variations, we again have that the minimizers of \eqref{eq:dirichlet} in $\mathcal A^2_f (\overline{\X}, \C)$ are solutions of the Hopf-Laplace equation in $\mathcal A^2_f (\overline{\X}, \C)$.

This leads to the question of whether or not there is a Lavrentiev gap between $\mathcal A^2_f (\overline{\X}, \C)$ and   the classes $\mathcal M_f^{2} (\overline{\X}, \overline{\Y})$ and $\mathcal H_f^{2}(\overline{\X}, \overline{\Y})$. Indeed, a minimizer in $\mathcal A^2_f (\overline{\X}, \C)$ could a priori not be an element of $\mathcal M_f^{2} (\overline{\X}, \overline{\Y})$. Our Theorem \ref{thm:main} resolves this question, by showing that solutions of the Hopf-Laplace equation in $\mathcal A^2_f (\overline{\X}, \C)$ are in fact elements of $\mathcal M_f^{2} (\overline{\X}, \overline{\Y})$. By combining this with the result \cite[Proposition 3.4]{Iwaniec-Onninen_Hopf-Harmonics} that Hopf-harmonic elements of $\mathcal M_f^{2} (\overline{\X}, \overline{\Y})$ are energy minimizers, the following application is immediate.

\begin{thm}
	Let $\X$ and $\Y$ be Jordan domains with $\Y$ Lipschitz, and let $f \colon \overline{\X} \onto \overline{\Y} $ be an orientation-preserving homeomorphism in  $W^{1,2} (\X, \C)$. Then a mapping $h_\circ \in \mathcal A^2_f (\overline{\X}, \C)$ solves the Hopf-Laplace equation \eqref{eq:hopf-harmonic} if and only if 
	\begin{equation}
		\mathsf E_{\X}[h_\circ]  = \min_{ h \in \mathcal A^2_f (\overline{\X}, \C) }\mathsf E_{\X}[h]  \,.
	\end{equation}
	Moreover, we have
	\begin{equation}
		\min_{ h \in \mathcal A^2_f (\overline{\X}, \C) }\mathsf E_{\X}[h] =\min_{ h \in \mathcal M_f^{2}(\overline{\X} , \overline{\mathbb Y}) } \mathsf E_{\X}[h]   = \inf_{ h \in \mathcal H_f^{2}(\overline{\X} , \overline{\mathbb Y}) } \mathsf E_{\X}[h] \,  .
	\end{equation}
\end{thm}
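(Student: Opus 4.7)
The plan is to observe that this theorem is essentially a corollary assembled from Theorem~\ref{thm:main} together with the results already cited in the preceding discussion. All the heavy analytic lifting sits in Theorem~\ref{thm:main}, whose role is precisely to eliminate the distinction between $\mathcal{A}^2_f(\overline{\X}, \C)$ and $\mathcal{M}^2_f(\overline{\X}, \overline{\Y})$ at the level of Hopf-harmonics. I would separate the argument into the equivalence of the two conditions for a fixed $h_\circ$ and the chain of energy equalities.

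For the forward direction, suppose $h_\circ \in \mathcal{A}^2_f(\overline{\X}, \C)$ solves the Hopf-Laplace equation~\eqref{eq:hopf-harmonic}. By definition of $\mathcal{A}^2_f(\overline{\X}, \C)$, one has $J_{h_\circ} \geq 0$ a.e.\ in $\X$, so the hypotheses of Theorem~\ref{thm:main} are met. Condition~\eqref{enum:main_thm_monotone} of that theorem then yields that $h_\circ$ is continuous up to $\partial\X$, maps $\overline{\X}$ onto $\overline{\Y}$, and is monotone; consequently $h_\circ \in \mathcal{M}_f^{2}(\overline{\X}, \overline{\Y})$. Invoking \cite[Proposition~3.4]{Iwaniec-Onninen_Hopf-Harmonics}, $h_\circ$ is a minimizer of $\mathsf E_\X$ over $\mathcal{M}_f^{2}(\overline{\X}, \overline{\Y})$. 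Combined with the chain of equalities established below, this gives that $h_\circ$ minimizes $\mathsf E_\X$ over the larger class $\mathcal{A}^2_f(\overline{\X}, \C)$ as well. For the reverse direction, the text already observes that $\mathcal{A}^2_f(\overline{\X}, \C)$ is closed under inner variations, so any minimizer of $\mathsf E_\X$ in this class automatically satisfies the inner-variational equation, which is precisely~\eqref{eq:hopf-harmonic}.

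It remains to prove the chain of identities. Observe first that $\mathcal{H}_f^{2}(\overline{\X}, \overline{\Y}) \subset \mathcal{M}_f^{2}(\overline{\X}, \overline{\Y}) \subset \mathcal{A}^2_f(\overline{\X}, \C)$, the second inclusion holding because a monotone Sobolev map with the given boundary data automatically has $J_h \geq 0$ a.e. This gives the trivial inequalities
\begin{equation*}
\inf_{h \in \mathcal{A}^2_f(\overline{\X}, \C)} \mathsf E_\X[h] \leq \inf_{h \in \mathcal{M}_f^{2}(\overline{\X}, \overline{\Y})} \mathsf E_\X[h] \leq \inf_{h \in \mathcal{H}_f^{2}(\overline{\X}, \overline{\Y})} \mathsf E_\X[h].
\end{equation*}
The rightmost two quantities are already equal by~\eqref{eq:monomin}. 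For the opposite inequality, since $\mathcal{A}^2_f(\overline{\X}, \C)$ is closed under weak $W^{1,2}$ limits (by \cite[Corollary~8.4.1]{IMb}) and the Dirichlet energy is weakly lower semicontinuous, a minimizer $h_* \in \mathcal{A}^2_f(\overline{\X}, \C)$ exists. By the inner-variational closure of $\mathcal{A}^2_f(\overline{\X}, \C)$, this $h_*$ satisfies~\eqref{eq:hopf-harmonic}, and Theorem~\ref{thm:main} then places $h_*$ in $\mathcal{M}_f^{2}(\overline{\X}, \overline{\Y})$. Hence
\begin{equation*}
\min_{h \in \mathcal{A}^2_f(\overline{\X}, \C)} \mathsf E_\X[h] = \mathsf E_\X[h_*] \geq \min_{h \in \mathcal{M}_f^{2}(\overline{\X}, \overline{\Y})} \mathsf E_\X[h],
\end{equation*}
closing the circle of inequalities and completing the proof.

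The main obstacle is not located in this theorem at all; the entire content is the reduction $\mathcal{A}^2_f \leadsto \mathcal{M}_f^{2}$ for Hopf-harmonics, which is exactly the burden borne by Theorem~\ref{thm:main}. Granted that result and \cite[Proposition~3.4]{Iwaniec-Onninen_Hopf-Harmonics}, the above argument is essentially a bookkeeping exercise about inclusions of admissible classes, existence of weak minimizers, and the inner-variational characterization of critical points.
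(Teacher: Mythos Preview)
Your proposal is correct and follows exactly the approach the paper indicates: the theorem is stated as an ``immediate'' application of Theorem~\ref{thm:main} together with \cite[Proposition~3.4]{Iwaniec-Onninen_Hopf-Harmonics}, and you have simply written out the bookkeeping (inclusions of classes, existence of a weak minimizer, inner-variational closure) that the paper leaves implicit. There is nothing to add.
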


We remark that the minimizer $h_\circ$ is unique at least when $\Y$ is somewhere convex; see \cite[Theorem 1.8]{Iwaniec-Onninen_Hopf-Harmonics}. Here, a simply connected Jordan domain $\Y\subset \mathbb C\,$ is said to be {\it somewhere convex} if there exists a disk $B^2(y_\circ , \varepsilon)$ with $y_\circ \in \partial \mathbb Y$ and $\varepsilon > 0$ such that $B^2(y_\circ , \varepsilon) \cap \overline{\mathbb Y}$ is convex. Notably, all $\mathscr C^2$-regular domains are somewhere convex. 

The monotone Hopf-harmonic energy-minimizer $h_\circ \colon \overline{\X}\onto \overline{\Y}$ is a harmonic diffeomorphism from $h_\circ^{-1} (\Y)$ onto $\Y$, see~\cite[Section 3.4]{Iwaniec-Onninen_Hopf-Harmonics}.  In particular, the set $\X \setminus h_\circ^{-1} (\Y) $ is squeezed into $\partial \Y$, and no continuum which is compactly contained in $\X$ can be squeezed into a point in $\mathbb Y$. The set $\X \setminus h_\circ^{-1} (\Y)$ may have a positive area. This, for instance, happens for the monotone Hopf-harmonic solution when one chooses the same Jordan domains $\X, \Y$ and boundary map $f$ as in Figure~\ref{fig:banaanitarget}. Note that we always have $J_{h_\circ} \equiv 0$ a.e. in  $\X \setminus h_\circ^{-1} (\Y)$.

Since the landmark paper of Ball~\cite{Bac}, the question of  almost everywhere invertibility of deformations   has been widely studied, for instance,  in the context of Neohookean type energy functionals in NE, see~\cite{Bainv, BP, Cib, Fonseca-Gangbo-book, GMS, MO, MST, Sv, We}. A functional $\mathsf E$ is of \emph{Neohookean type} if 
\[\mathsf E(A) \to \infty \qquad \textnormal{when } \det A \to 0^{+} \, . \]
Such functionals enforce that  all deformations $h$ with finite energy are  \emph{strictly orientation-preserving} in the sense that $J_h(x) > 0$ a.e. The deformations  are  also regular up to the boundary.  Therefore, in the corresponding minimization problems, one can assume that the admissible deformations coincide with a given boundary homeomorphism $f$ in the classical sense. In the case of the Dirichlet energy~\eqref{eq:dirichlet}, however, such restrictions to the admissible deformations in $\mathcal A^2_f (\overline{\X}, \C)$ are not possible. Indeed, even the monotone minimizers of the Dirichlet energy need not be strictly orientation-preserving. Furthermore the condition  $h=f$ on $\partial \X$ is  not preserved under the weak $W^{1,2}$-convergence.  This motivates  the Sobolev trace boundary values assumption in Theorem~\ref{thm:main};  the {\it weak formulation of the Dirichlet problem}.

\subsection{Main ideas of the proof} 
We note that Theorem \ref{thm:main} easily reduces to the case where $\X$ is the unit disk $\D$. Indeed, the Hopf-Laplace equation is preserved in any conformal change of variables, and Carath\'eodory's theorem implies that the Riemann map $g \colon \X \to \D$ for any Jordan domain $\X$ extends homeomorphically to the boundary $\partial \X$, preserving any boundary values. Hence, we hereafter consider only the case $\X = \D$.

The implication \eqref{enum:main_thm_monotone} $\implies$ \eqref{enum:main_thm_jacobian} in Theorem \ref{thm:main} is the easy part. Indeed, since a monotone mapping $h \colon \overline{\D} \onto \overline{\Y}$ in $W^{1,2}(\D, \C)$ can be approximated with homeomorphisms in $W^{1,2}(\D, \C)$ due to \cite[Theorem 1.3]{IOmono}, the Jacobian $J_h$ cannot change sign. Here, we also used the fact that the Jacobian of any planar Sobolev homeomorphism has a constant sign~\cite[Theorem 5.22]{HKb}. The non-negativity of $J_h$ then follows from the fact that the homeomorphism $f \colon \overline{\D} \onto \overline{\Y}$ in the boundary condition is orientation-preserving.

The primary content of Theorem \ref{thm:main} is hence the implication \eqref{enum:main_thm_jacobian} $\implies$ \eqref{enum:main_thm_monotone}. It is known that solutions to the Hopf-Laplace equation~\eqref{eq:hopf-harmonic} with nonnegative Jacobian are locally Lipschitz continuous in $\X$; see~\cite{Iwaniec-Kovalev-Onninen_Duke}. Note though that these solutions need not be differentiable everywhere~\cite{CIKO}. Therefore, the main challenge is in proving that $h$ is continuous up to the boundary, and that $h^{-1}\{y\}$ is connected for every $y \in \overline{\Y}$. We in fact prove the boundary regularity of $h$  last, by leveraging the property that the sets $h^{-1}\{y\}$ are connected; the Hopf-Laplace equation is notably not required for this step. 

Hence, the core of the proof lies in showing that $h^{-1}\{y\}$ is connected for every $y \in \overline{\Y}$, where the pre-image under the homeomorphic boundary trace $h\vert\partial \D = f\vert\partial \D$ is included for $y \in \partial \Y$. First, we use degree theory and topological arguments to reduce the question into showing that for all $y \in \C$ and $r > 0$, there exists no nonempty component of $h^{-1}(B^2(y, r))$ where $J_h \equiv 0$. This topological part requires care, as we have to deal with boundary points where we have only a Sobolev boundary value. Note that the aforementioned condition is in particular not satisfied by the maps in Examples \ref{ex:1} and \ref{ex:2}. 

The remaining part is then to show that solutions of the Hopf-Laplace equation with homeomorphic Sobolev boundary values in fact satisfy the above condition. For this, we use the properties of the Hopf differential $h_z \, \overline{h_{\bar z}} \, dz \otimes dz$, combined with results on the trajectory structure of $L^1$-integrable holomorphic quadratic differentials.

\section{The Hopf-Laplace equation}

In this section, we recall some basic properties of solutions to the Hopf-Laplace equation, and their close relation with holomorphic quadratic differentials. Our main reference for the theory of holomorphic quadratic differentials is the book of Strebel \cite{Strebel_quadratic_differentials}.

\subsection{Holomorphic quadratic differentials}

Recall that a \emph{holomorphic quadratic differential} on the unit disk $\D$ is a field of symmetric complex 2-tensors on $\D$ of the form $G = g dz \otimes dz$, where the function $g \colon \D \to \C$ is holomorphic. 

A holomorphic quadratic differential $G = g dz \otimes dz$ has a \emph{critical point} at $z \in \D$ if its coefficient function $g$ has a zero at $z$. Any other points $z \in \D$ are called \emph{regular points} of $g(z) dz \otimes dz$. Note that we assume that $g$ does not have any poles; under a less restrictive definition allowing for meromorphic $g$, poles of $g$ would also generally be considered critical points of $G$. 

A smooth curve $\gamma \colon (a,b) \to \D$ is called a \emph{vertical arc} of a holomorphic quadratic differential $G$ if $G(\dot{\gamma}, \dot{\gamma}) = g \dot{\gamma}^2 < 0$ everywhere on $(a, b)$. Conversely, the curve $\gamma$ is called a \emph{horizontal arc} of $G$ if $G(\dot{\gamma}, \dot{\gamma}) > 0$ everywhere on $(a, b)$. Maximal vertical arcs are called \emph{vertical trajectories}, and maximal horizontal arcs are similarly called \emph{horizontal trajectories}.

Most notably, if $z$ is a regular point of a holomorphic quadratic differential $G$, then there exist a horizontal trajectory and a vertical trajectory of $G$ passing through $z$, and these trajectories are unique up to reparametrization; see \cite[Theorem 5.5]{Strebel_quadratic_differentials}. Two different horizontal trajectories will therefore never meet at a regular point, and the same holds for two different vertical trajectories. On the other hand, if $z$ is an isolated critical point of $G$, then $g$ has a zero at $z$ of order $k > 0$. In this case, there are $k+2$ unique horizontal trajectories and $k+2$ unique vertical trajectories which exit $z$; see e.g.\ the discussion in \cite[Section 7.1]{Strebel_quadratic_differentials}.

A vertical or horizontal trajectory which exits a critical point of $G$ at one of its endpoints is called \emph{critical}. If $G$ is not identically zero, then its critical points are isolated, and it therefore has at most countably many of them. Combined with the above description of the trajectory structure at critical points, if $G$ is not identically zero, then the union of all critical trajectories of $G$ has measure zero.

We then recall the following key properties of non-critical trajectories from \cite{Strebel_quadratic_differentials} when the quadratic differential is $L^1$-integrable.

\begin{lemma}\label{lem:noncrit_traj_boundary_lemma}
	Let $G = g dz \otimes dz$ be a holomorphic quadratic differential on the unit disk $\D$, and suppose that $g \in L^1(\D, \C)$. Let $\gamma \colon (a, b) \to \D$ be a non-critical vertical/horizontal trajectory, where $a, b \in \R \cup \{\pm \infty\}$. Then there exist $y_1, y_2 \in \partial \D$ such that $\lim_{t \to a} \gamma(t) = y_1$, $\lim_{t \to b} \gamma(t) = y_2$, and $y_1 \neq y_2$.
\end{lemma}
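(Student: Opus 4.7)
The plan is to work with the natural $\phi$-metric $ds = |g|^{1/2}|dz|$ and the distinguished local charts $\zeta = \int \sqrt{g}\,dw$ around regular points of $G$, in which horizontal trajectories straighten to horizontal line segments and Euclidean area in the $\zeta$-plane records $\int |g|\,dA$. Replacing $G$ by $-G$ swaps vertical and horizontal trajectories, so I assume $\gamma$ is horizontal, parametrized by its natural parameter so that $g(\gamma(t))\dot\gamma(t)^2 \equiv 1$. Let $\Lambda_a, \Lambda_b \subset \overline{\D}$ denote the limit sets of $\gamma$ at its two ends; both are nonempty, compact, and connected.

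The first task is to show $\Lambda_b \subset \partial\D$ (and symmetrically $\Lambda_a \subset \partial\D$). A hypothetical $z_0 \in \Lambda_b \cap \D$ cannot be a critical point of $G$: at a zero of $g$ of order $k$, the only horizontal trajectories whose closures contain $z_0$ are the $k+2$ critical trajectories emanating from $z_0$ (see \cite[Ch.~III]{Strebel_quadratic_differentials}), whereas $\gamma$ is non-critical by hypothesis. If instead $z_0$ is regular, then in a distinguished chart on a neighborhood $U$ of $z_0$ the accumulation would force $\gamma$ to pass through any fixed small $\phi$-rectangle around $\zeta(z_0)$ along infinitely many distinct horizontal levels accumulating at height zero; the recurrent trajectory behavior this describes on a simply connected $U$ with finite $\phi$-area $\int_U |g|\,dA < \infty$ is excluded by the classification of recurrent trajectories of $L^1$-integrable holomorphic quadratic differentials, essentially by a pushoff-and-foliation argument along the lines of \cite[Ch.~III]{Strebel_quadratic_differentials}. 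Hence $\Lambda_b \subset \partial\D$. The same sweep-out argument applied to a short crosscut of $\D$ separating two hypothetical distinct accumulation points of $\Lambda_b$ (chosen to avoid the discrete critical set of $G$ and with endpoints on $\partial\D\setminus\Lambda_b$), which $\gamma$ must then cross infinitely often, shows that $\Lambda_b$ is a singleton. Thus $\lim_{t \to a}\gamma(t) = y_1$ and $\lim_{t \to b}\gamma(t) = y_2$ exist in $\partial\D$.

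The main obstacle is the strict inequality $y_1 \neq y_2$. Suppose for contradiction $y_1 = y_2 = y$; then $\overline{\gamma((a,b))} \cup \{y\}$ is a Jordan curve in $\overline{\D}$ bounding a Jordan subdomain $\Omega \subset \D$. A consistent holomorphic branch of $\sqrt{g}\,dz$ along $\gamma$ can be chosen with $\sqrt{g(\gamma(t))}\dot\gamma(t) \equiv 1$, so that $\int_\gamma \sqrt{g}\,dz = b - a > 0$. I would complete the contradiction by a contour-integral argument on $\Omega$: after passing to the branched double cover of $\Omega$ over the odd-order zeros of $g$ inside $\Omega$, the form $\sqrt{g}\,dz$ lifts to a single-valued holomorphic $1$-form whose integral around the lifted boundary vanishes by Cauchy's theorem. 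Since the single boundary point $y$ contributes nothing to the boundary integral, this forces $\int_\gamma \sqrt{g}\,dz = 0$, contradicting $b - a > 0$. The most delicate technical points here are the case $b - a = \infty$ and the possibility that zeros of $g$ accumulate at $y \in \partial\Omega$; I expect both can be addressed by exhausting $\Omega$ from the inside by Jordan subdomains bounded by nearby horizontal trajectories, along which the corresponding identity persists by continuity, and then passing to the limit.
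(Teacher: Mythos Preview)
The paper's own proof consists solely of two citations to \cite{Strebel_quadratic_differentials} (Theorems~19.6 and~19.4~a)), so there is no argument to compare against; you are attempting to prove what the paper merely quotes.

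Your sketch of the first part (the ends of $\gamma$ converge to single points of $\partial\D$) follows standard lines, though it still leans on Strebel's trajectory classification rather than being self-contained.

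The contour-integral approach to $y_1\neq y_2$ has a genuine gap. The identity you want to contradict is $\int_{\gamma}\sqrt{g}\,dz=b-a>0$ versus a Cauchy-type vanishing, but $b-a$ is precisely the $\phi$-length of $\gamma$, and nothing you have established forces it to be finite; if $b-a=\infty$ there is no well-defined boundary integral and hence no contradiction. Your proposed remedy---exhaust $\Omega$ by Jordan subdomains bounded by nearby horizontal trajectories and pass to the limit---does not close the gap as stated: by the first half of the lemma those nearby trajectories themselves run out to $\partial\D$, so every stage of the exhaustion carries the same boundary singularity at $\partial\D$, and there is no finite identity along them to take a limit of. Even when $b-a<\infty$, closing the contour at $y\in\partial\D$ still requires a length--area estimate of the form $\int_{\alpha_{r_n}}\abs{g}^{1/2}\abs{dz}\to 0$ along suitable small circular arcs $\alpha_{r_n}$ (which does follow from $g\in L^1(\D)$, but must be argued), and the branched-double-cover bookkeeping---one versus two boundary components over $\partial\Omega$, depending on the parity of the number of odd-order zeros of $g$ in $\Omega$---needs to be spelled out before one can assert that the lifted boundary integral both vanishes and equals a nonzero multiple of $\int_\gamma\sqrt{g}\,dz$.
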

\begin{proof}
	The ends of the trajectory converge to well defined boundary points due to \cite[Theorem 19.6]{Strebel_quadratic_differentials}. These boundary points cannot be the same due to \cite[Theorem 19.4 a)]{Strebel_quadratic_differentials}; note that although this result is only given for horizontal trajectories (which \cite{Strebel_quadratic_differentials} refers to as just 'trajectories'), it also applies to vertical trajectories, since replacing $G$ with $-G$ swaps its horizontal and vertical trajectories with each other.
\end{proof}

\subsection{The Hopf-Laplace equation}\label{subsect:hopf_harm_prelims}

Suppose that $h \in W^{1,2}_\loc(\D, \C)$ satisfies the Hopf-Laplace equation \eqref{eq:hopf-harmonic} in a weak sense. We denote $\varphi = \partial_z h  \overline{\partial_{\overline{z}} h}$, in which case the equation reads as  $\partial_{\overline{z}} \varphi = 0$; this particular $\varphi$ is also called the \emph{Hopf product} of $h$. 

Since $\varphi \in L^1_\loc(\D, \C)$, the Hopf-Laplace equation therefore implies that the Hopf product $\varphi$ is weakly holomorphic. Since $\Delta = 4 \partial_z \partial_{\overline{z}}$, the map $\varphi$ is also weakly harmonic, and hence smooth by Weyl's lemma. Hence, the Hopf-Laplace equation is equivalent with requiring that $\varphi$ is a holomorphic map.

The Hopf product $\varphi$ of the solution $h$ therefore defines a holomorphic quadratic differential $H = \varphi dz \otimes dz$ on $\D$. A notable property of $H$ is that its vertical and horizontal arcs travel exactly in the directions of minimal and maximal stretch of $Dh$. Indeed, we have $Dh(z)e^{i\theta} = (\partial_z h) e^{i\theta} + (\partial_{\overline{z}} h) e^{-i\theta}$, and therefore 
\begin{align*}
	\smallabs{Dh(z)e^{i\theta}}^2 
	&= \abs{\partial_z h}^2 + \abs{\partial_{\overline{z}} h}^2 
	+ 2\Re [(\partial_z h) (\overline{\partial_{\overline{z}} h}) e^{2i\theta}]\\
	&= \abs{\partial_z h}^2 + \abs{\partial_{\overline{z}} h}^2 
	+ 2\Re [H(z)(e^{i\theta}, e^{i\theta})].
\end{align*}
Since $\smallabs{H(z)(e^{i\theta}, e^{i\theta})}$ is independent of $\theta$, the above quantity is therefore respectively minimized/maximized when the value of $H(z)(e^{i\theta}, e^{i\theta})$ lies on the negative/positive real axis.

\subsection{Continuity inside $\D$}
We then recall an important interior regularity result, which acts as essentially the starting point for the proof of our main result. Namely, suppose that $h \in W^{1,2}(\D, \C)$ is a solution to the Hopf-Laplace equation with non-negative Jacobian. Then it follows that $h$ is locally Lipschitz continuous inside $\D$. This was shown by Iwaniec, Kovalev and Onninen in \cite{Iwaniec-Kovalev-Onninen_Duke}.

\begin{thm}[{\cite[Theorem 1.3]{Iwaniec-Kovalev-Onninen_Duke}}]\label{thm:hopf_harm_lipschitz}
	Let $\Omega \subset \C$ be open, and let $h \in W^{1,2}(\Omega, \C)$ with $J_h \geq 0$ almost everywhere. Suppose that the Hopf product $\varphi = \partial_z h \overline{\partial_{\overline{z}} h}$ is bounded and H\"older continuous. Then $h$ is locally Lipschitz continuous.
\end{thm}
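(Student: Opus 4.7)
The plan is to establish a local $L^\infty$-bound for $|Dh|$ on $\Omega$. Since $|h_z|\,|h_{\bar z}| = |\varphi|$ is bounded by hypothesis and $J_h \geq 0$ forces $|h_z| \geq |h_{\bar z}|$, it suffices to bound $|h_z|$ locally. My approach is to pass to the natural parameter of the holomorphic quadratic differential $H = \varphi\,dz\otimes dz$, where a remarkable pointwise dichotomy emerges that reduces the problem to the subharmonicity of $|h_z|^2/|\varphi|$.

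I would first work on a simply connected subdomain $U \subset \Omega$ avoiding the (isolated) zeros of $\varphi$. On $U$, choose a holomorphic square root $\psi$ with $\psi^2 = \varphi$ and introduce the natural parameter $\zeta = \int\psi\,dz$, a conformal coordinate. In this coordinate the pulled-back map $\tilde h$ satisfies $\tilde h_\zeta\,\overline{\tilde h_{\bar\zeta}} \equiv 1$. Setting $u = \tilde h_\zeta$ we thus have $\tilde h_{\bar\zeta} = 1/\overline{u}$, and applying the compatibility condition $\partial_{\bar\zeta}\tilde h_\zeta = \partial_\zeta\tilde h_{\bar\zeta}$ to this identity leads to $\bar{u}^{\,2}\,\partial_{\bar\zeta}u + \overline{\partial_{\bar\zeta}u} = 0$. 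Taking moduli yields the key relation
\begin{equation*}
(|u|^2 - 1)\,|\partial_{\bar\zeta}u| = 0.
\end{equation*}
Combined with the a priori bound $|u| \geq 1$ (which follows from $J_h \geq 0$ in the normalized coordinates), this produces the following dichotomy: at every point of $U$, either $u$ is holomorphic there, or $|u| = 1$.

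Using this dichotomy, I would show that $|u|^2$ is subharmonic on $U$ in the distributional sense. On the open \emph{free set} where $|u|^2 > 1$, $u$ is holomorphic and so $|u|^2$ is classically subharmonic. On the \emph{contact set} where $|u|^2 = 1$ it is locally constant. Since $|u|^2 \geq 1$ globally, the sub-mean-value inequality $|u|^2(z_0) \leq |B(z_0,r)|^{-1}\int_{B(z_0,r)}|u|^2$ is trivial at contact points and standard at free points. The $L^2_{\loc}$-bound on $u$ in $\zeta$-coordinates, inherited from $h\in W^{1,2}$ via the conformal change of variable, then upgrades this sub-mean-value property into a local $L^\infty$-bound for $|u|^2 = |h_z|^2/|\varphi|$ away from the zeros of $\varphi$.

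Finally, I would handle the isolated zeros of $\varphi$: near such a zero $z_0$, the quantity $|h_z|^2/|\varphi|$ is subharmonic on the punctured ball $B(z_0,r)\setminus\{z_0\}$ by conformal invariance, and a removable-singularity argument extends subharmonicity (hence the sub-mean-value bound) across $z_0$, using the local integrability provided by the H\"older regularity of $\varphi$. The bound on $|h_z|^2/|\varphi|$, together with the bound on $|\varphi|$, then gives $|h_z|\in L^\infty_{\loc}$ and hence $|Dh|\in L^\infty_{\loc}$, so $h$ is locally Lipschitz. The main obstacle I foresee is the delicate analysis at zeros of $\varphi$, where the natural parameter degenerates and the removable-singularity step requires estimates tying the H\"older continuity of $\varphi$ to the integrability of $|h_z|^2/|\varphi|$; establishing subharmonicity distributionally at the interface between the free and contact sets is a more routine but still nontrivial technical point.
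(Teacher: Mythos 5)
The paper itself does not prove this theorem; it cites it as Theorem 1.3 of Iwaniec, Kovalev, and Onninen (Duke Math.\ J.\ 2013), so what follows compares your proposal to what that external proof would need to establish. Your outline contains the right heuristic (a dichotomy derived from the compatibility condition, leading to a sub-mean-value estimate in the natural parameter), but as written there are three genuine gaps. First, the theorem you are trying to prove does not assume that $\varphi$ is holomorphic, only that it is bounded and H\"older continuous. Your entire framework --- isolated zeros, holomorphic square root $\psi$ with $\psi^2 = \varphi$, natural parameter $\zeta = \int \psi\, dz$ --- is only available when $\varphi$ is holomorphic. That is true in the paper's intended application (solutions of the Hopf-Laplace equation), but the statement you are proving is more general, and the cited proof in Iwaniec--Kovalev--Onninen is carried out at that greater generality.

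Second, and more fundamentally, the pointwise dichotomy $(|u|^2-1)\,|\partial_{\bar\zeta} u| = 0$ cannot be derived with the regularity at hand. You only know $u = \tilde h_\zeta \in L^2_{\loc}$, so $\partial_{\bar\zeta} u$ is merely a distribution; the product $\bar u^{\,2}\,\partial_{\bar\zeta} u$ with $\bar u^{\,2}\in L^1$ is not defined, and neither is taking absolute values of the resulting distributional identity. Likewise, the ``free set'' $\{|u|^2>1\}$ is only a measurable set, not a priori open, so the assertion that $u$ is holomorphic there presupposes a continuous representative of $u$ --- which is exactly the regularity one is trying to establish. Making the dichotomy rigorous requires a regularization argument or a carefully designed integral identity (e.g.\ testing the equation against suitable truncations), and this is precisely where the technical weight of the Iwaniec--Kovalev--Onninen proof lies.

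Third, the removable-singularity step at zeros of $\varphi$ is incorrect as stated. At a zero $z_0$ of $\varphi$ with $J_h \geq 0$, one typically has $h_{\bar z}(z_0)=0$ but $h_z(z_0)\neq 0$, so $|u|^2 = |h_z|^2/|\varphi|$ genuinely blows up as $z\to z_0$. A subharmonic function on a punctured disk can certainly tend to $+\infty$ at the puncture, so subharmonicity of $|u|^2$ does not give a bound through $z_0$, and the singularity is not removable. What you actually need to bound is $|h_z|^2 = |u|^2\,|\varphi|$, and the compensating factor $|\varphi|$ vanishes at $z_0$ at just the right rate --- but establishing this cancellation requires working with a different quantity than $|u|^2$ and is far from the routine step your outline suggests. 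In short: the structure of the argument is recognizable and in the right spirit for the holomorphic-$\varphi$ case, but each of the three steps (generality of $\varphi$, rigor of the dichotomy, behavior at zeros) conceals a missing argument, and the middle one in particular is the main difficulty that the original proof must --- and does --- confront head-on.
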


Most notably, we obtain that a solution of the Hopf-Laplace equation $h$ with $J_h \geq 0$ almost everywhere is continuous inside $\D$, although we do not  yet have continuity up to the boundary. We also obtain that $h \colon \D \to \C$ satisfies the Lusin (N) -condition; that is, that for every set of measure zero $E \subset \D$, also the image set $h(E)$ has measure zero.

\subsection{Domains with zero Jacobian}

Suppose that $h$ is a solution of the Hopf-Laplace equation in a domain $\Omega \subset \C$, and that $J_h$ is zero almost everywhere in $\Omega$. Let $H$ be the corresponding holomorphic quadratic differential. Since the vertical trajectories of $H$ travel in the direction of minimal stretch of $Dh$, and since $J_h \equiv 0$, it is to be expected that the derivative of $h$ along a vertical trajectory is zero. Therefore, $h$ should be constant along vertical trajectories if its Jacobian is zero.

Indeed, a precise version of the above argument has been given by Iwaniec and Onninen in \cite[Lemma 2.6]{Iwaniec-Onninen_Hopf-Harmonics}.  The exact result they show is as follows; note that their original statement includes an assumption that $h$ is locally Lipschitz, but this is unnecessary due to Theorem \ref{thm:hopf_harm_lipschitz}.

\begin{lemma}\label{lem:Hopf_harm_vert_traj_constant}
	Let $\Omega \subset \C$ be open, and suppose that $f \in W^{1,2}_\loc(\Omega, \C)$ satisfies the Hopf-Laplace equation $\partial_{\overline{z}} \varphi = 0$, where $\varphi = (\partial_z f) ( \overline{\partial_{\overline{z}} f})$. If $J_f(x) = 0$ a.e.\ in $\Omega$, then $f$ is constant on every vertical arc of the Hopf differential $H = \varphi dz \otimes dz$.
\end{lemma}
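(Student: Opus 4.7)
The plan is to reduce to natural parameter coordinates for $H$, in which vertical arcs become vertical line segments and the desired statement becomes transparent. First I note that along any vertical arc the condition $\varphi \dot\gamma^2 < 0$ forces $\varphi \neq 0$ on the image of $\gamma$, so the arc lies in the open regular set $\Omega' = \Omega \setminus \{\varphi = 0\}$. By Theorem \ref{thm:hopf_harm_lipschitz}, $f$ is locally Lipschitz in $\Omega$ and in particular ACL, which is exactly the regularity needed to turn an a.e.\ identity for derivatives into genuine constancy along lines.

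Around any point $z_0 \in \Omega'$ I would pick a simply connected neighborhood $U$ on which $\varphi$ admits a holomorphic square root, and introduce the natural parameter $\zeta = \int_{z_0}^{z} \sqrt{\varphi(w)}\, dw$. This is a biholomorphism of $U$ onto an open set $V \subset \C$, under which vertical arcs of $H$ correspond to vertical line segments in $V$. Setting $\tilde f(\zeta) = f(z(\zeta))$, a direct chain-rule computation gives that the Hopf product transforms to $\tilde f_\zeta \overline{\tilde f_{\bar\zeta}} \equiv 1$ on $V$, while the Jacobian identity $J_f = 0$ (a.e.) transfers, by the conformality of $\zeta \mapsto z$, to $J_{\tilde f} = 0$ a.e., i.e.\ $|\tilde f_\zeta| = |\tilde f_{\bar\zeta}|$ a.e. Together these force $|\tilde f_\zeta| = |\tilde f_{\bar\zeta}| = 1$ and $\tilde f_\zeta = \tilde f_{\bar\zeta}$ a.e.\ on $V$. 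Writing $\zeta = u + iv$, this is equivalent to $\partial_v \tilde f = 0$ a.e.

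Because $\tilde f$ is locally Lipschitz on $V$ (being the composition of a locally Lipschitz map with a biholomorphism), it is ACL; hence on almost every vertical line in $V$ the restriction of $\tilde f$ is absolutely continuous with zero derivative and therefore constant. Continuity of $\tilde f$ then upgrades constancy on a dense set of vertical lines to constancy on every vertical line of $V$: on any axis-aligned rectangle $R \subset V$, the function $u \mapsto \tilde f(u,v)$ must agree for each fixed $v$ with the continuous function $u \mapsto \tilde f(u,v_0)$ obtained from a generic line $v = v_0$ on which constancy is known, by letting $v_0$ vary through a dense sequence. Finally, any compact subarc of a vertical arc $\gamma \colon (a,b) \to \Omega'$ can be covered by finitely many such natural parameter charts, and concatenating the local statements shows that $f \circ \gamma$ is constant on $[a',b'] \subset (a,b)$ and hence on $(a,b)$.

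I expect the main subtlety to be precisely the last step before the covering argument: passing from the almost-everywhere identity $\partial_v \tilde f = 0$ to the statement that $\tilde f$ is constant on \emph{every} vertical line, not just almost every one. This is where the local Lipschitz conclusion of Theorem \ref{thm:hopf_harm_lipschitz} is indispensable, since without continuity plus ACL one cannot recover pointwise information on the prescribed measure-zero trajectory $\gamma$ from an a.e.\ identity on $\Omega$.
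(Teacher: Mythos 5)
The paper itself offers no proof of this lemma; it is quoted from \cite[Lemma 2.6]{Iwaniec-Onninen_Hopf-Harmonics}, with the remark that the Lipschitz hypothesis in that reference can be dropped by invoking Theorem~\ref{thm:hopf_harm_lipschitz}. So there is no ``paper proof'' to compare against, and I assess your argument on its own terms: it is correct, and it is the standard natural-parameter argument for statements of this kind. The computations are right: with $\zeta_z = \sqrt{\varphi}$ one gets $\tilde f_\zeta = f_z z_\zeta$, $\tilde f_{\bar\zeta} = f_{\bar z}\,\overline{z_\zeta}$, hence $\tilde f_\zeta \overline{\tilde f_{\bar\zeta}} = \varphi\,(z_\zeta)^2 = 1$ a.e., and $J_{\tilde f} = J_f\, |z_\zeta|^2 = 0$ a.e.; combined these force $|\tilde f_\zeta| = |\tilde f_{\bar\zeta}| = 1$ and $\tilde f_\zeta = \tilde f_{\bar\zeta}$ a.e., i.e.\ $\partial_v \tilde f = 0$ a.e. The passage from the a.e.\ identity to constancy on every vertical line, via Lipschitz/ACL regularity plus Fubini plus continuity, is exactly the right mechanism, and you are right that Theorem~\ref{thm:hopf_harm_lipschitz} (which applies because $J_f = 0$ a.e.\ implies $J_f \geq 0$ a.e.\ and $\varphi$ is holomorphic, hence locally bounded and H\"older) is what makes this step work. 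The covering/concatenation step at the end is standard.

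One small slip worth fixing in a cleaned-up write-up: in the ``continuity upgrade'' paragraph you have the roles of $u$ and $v$ scrambled. You write of ``a generic line $v = v_0$ on which constancy is known'' and of ``letting $v_0$ vary through a dense sequence,'' but the lines on which constancy has been established are the \emph{vertical} lines $\{u = u_0\}$ for a.e.\ $u_0$, not the horizontal lines $\{v = v_0\}$. The intended (and correct) argument is: for a dense full-measure set of $u_0$, $v \mapsto \tilde f(u_0,v)$ is constant; for any other $u$, approximate by $u_n \to u$ in this set and use local Lipschitz continuity of $\tilde f$ to pass the constancy to $v \mapsto \tilde f(u,v)$. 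The substance is fine; only the bookkeeping of which coordinate indexes the family of lines needs correction.
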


\section{Sobolev boundary conditions}

In this section, we recall some preliminaries related to trace maps and the extension of boundary homeomorphisms on the unit disk. We finish with a lemma which essentially extracts the required topological information provided by our homeomorphic Sobolev boundary condition.

\subsection{Trace maps and Sobolev homeomorphisms}\label{sec:traceSobolevmaps}

Suppose that $f \in W^{1,2}(\D)$. There exists a bounded \emph{trace operator} $T \colon W^{1,2}(\D) \to L^n(\S^1)$, $\S^1=\partial \D$, such that for every continuous Sobolev map $g \in W^{1,2}(\D) \cap C(\overline{\D})$, we have $Tg = g\vert\S^{1}$ a.e.\ on $\S^{1}$. We can hence define a \emph{trace map} $f\vert\S^{1} := Tf \in L^2(\S^{1})$ of $f$. The definition immediately extends to $f \in W^{1,2}(\D, \C)$ by $T(f_1 + if_2) = T(f_1) + i T(f_2)$.

We then recall that the \emph{Sobolev space with vanishing boundary values} $W^{1,2}_0(\D)$ is defined as the closure of  the space of smooth compactly supported functions $C^\infty_0(\D)$ in $W^{1,2}(\D)$. A similar definition also applies for $W^{1,2}_0(\D, \C)$. The trace operator provides an alternate characterization of $W^{1,2}_0(\D)$: namely, $f \in W^{1,2}_0(\D)$ if and only if $f \in W^{1,2}(\D)$ and $f\vert \S^{1} \equiv 0$. Consequently, two functions $f, g \in W^{1,2}(\D)$ have the same trace if and only if $f - g \in W^{1,2}_0(\D)$.

The image of the trace operator in $L^2(\S^{1}, \C)$ is precisely the \emph{fractional Sobolev space} $W^{1/2,2}(\S^{1}, \C)$. This space consists of exactly the maps $f \in L^2(\S^{1}, \C)$ which satisfy the \emph{Douglas condition}~\cite{Do}
\[
	\int_{\S^{1} \times \S^{1}} \abs{\frac{f(z) - f(z')}{z-z'}}^2 \dd z \dd z' < \infty.
\]

The Douglas condition has a key role in the theory of Sobolev homeomorphic extensions. Indeed, suppose that $\Y \subset \C$ is a Lipschitz Jordan domain, and $f \colon \S^1 \to \partial \Y$ is a homeomorphism which satisfies the Douglas condition. Then $f$ can be extended to a homeomorphism $f \colon \overline{\D} \to \overline{\Y}$ with $f \in W^{1,2}(\D, \C)$ , see \ \cite[pp. 2--3]{Koski-Onninen_Sobolev-homeo}. We hence obtain the following corollary.

\begin{cor}\label{cor:homeomorphic_extension}
	Let $g \in W^{1,2}(\D, \C)$ be such that the trace $g\vert\S^1$ is a homeomorphism onto the boundary of a Lipschitz domain $\Y$. Then there exists a homeomorphism $f \colon \overline{\D} \to \overline{\Y}$ such that $g-f \in W^{1,2}_0(\D, \C)$.
\end{cor}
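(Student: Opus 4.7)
The plan is to combine the characterization of trace spaces by the Douglas condition with the Sobolev homeomorphic extension result from \cite{Koski-Onninen_Sobolev-homeo} already recalled in the excerpt. Since $g \in W^{1,2}(\D, \C)$, the trace $g\vert\S^1$ lies in the fractional Sobolev space $W^{1/2,2}(\S^1, \C)$, and hence satisfies the Douglas condition
\[
	\int_{\S^{1} \times \S^{1}} \abs{\frac{g(z) - g(z')}{z-z'}}^2 \dd z \dd z' < \infty.
\]
By hypothesis this same trace $g\vert\S^1$ is a homeomorphism from $\S^1$ onto $\partial \Y$, where $\Y$ is a Lipschitz Jordan domain.

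The next step is to invoke the Sobolev homeomorphic extension theorem from \cite{Koski-Onninen_Sobolev-homeo} (as cited just before the corollary). Applied to the boundary map $g\vert\S^1$, it produces a homeomorphism $f \colon \overline{\D} \to \overline{\Y}$ with $f \in W^{1,2}(\D, \C)$ such that $f\vert\S^1 = g\vert\S^1$ as maps on $\S^1$.

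Finally, since both $f$ and $g$ belong to $W^{1,2}(\D, \C)$ and their traces agree on $\S^1$, the trace-based characterization of $W^{1,2}_0$ recalled in Section~\ref{sec:traceSobolevmaps} yields $g - f \in W^{1,2}_0(\D, \C)$, which completes the proof. There is essentially no obstacle here; the corollary is a bookkeeping consequence of the previously cited results, and the only thing to verify is that the extension theorem of \cite{Koski-Onninen_Sobolev-homeo} indeed produces an extension that matches the prescribed boundary homeomorphism exactly (not merely up to reparametrization), which is clear from its statement.
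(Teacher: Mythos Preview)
Your proof is correct and follows exactly the approach indicated in the paper: the corollary is stated there without a separate proof, as an immediate consequence of the Douglas-condition characterization of the trace space together with the Sobolev homeomorphic extension theorem of \cite{Koski-Onninen_Sobolev-homeo}, and you have simply spelled out these steps.
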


\begin{rem}\label{rem:Sobolev_extension}
	When $\Y$ is a Lipschitz Jordan domain, any homeomorphism $f \colon \overline{\D} \to \overline{\Y}$ with $f \in W^{1,2}(\D, \C)$ can be extended to a homeomorphism $\tilde{f} \colon \C \to \C$ with  $\tilde{f} \in W^{1,2}_\loc(\C, \C)$ by a standard reflection argument. 
\end{rem}

\subsection{Preliminaries on conformal capacity}

Suppose that $U \subset \C$ is an open domain, and $K \subset U$ is compact. The \emph{conformal capacity} $\capac(K, U)$ of the condenser $(K, U)$ is the infimum
\begin{equation}\label{eq:capacity}
	\inf_u \int_U \abs{\nabla u(z)}^2 \dd z,
\end{equation}
where the infimum is taken over all $u \in C^\infty_0(U)$ with $u\vert K \geq 1$. The conformal capacity is notably preserved in conformal transformations.

\begin{rem}\label{rem:admissible_functions}
We note that the same value of $\capac(K, U)$ is obtained if we instead take the infimum over $u \in W^{1,2}_0(U) \cap C(U)$ with $u\vert K \geq 1$. We will hence call any such $u$ \emph{admissible} for $(K, U)$. The proof of this fact is a standard approximation argument, and is explained e.g.\ in \cite[pp.\ 27--28]{Heinonen-Kilpelainen-Martio_book}.
\end{rem}

We then recall several standard results on conformal capacity. The first is a symmetrization theorem for capacities; see e.g.\ \cite[Section 7.16]{Vuorinen_book}. Namely, suppose that $K \subset \C$ is compact, and that $L$ is a ray originating from the origin. The \emph{symmetrization} $\sym_L(K)$ of $K$ is defined as follows: if $K \cap \S^{1}(r) = \emptyset$, then $\sym_L(K) \cap \S^{1}(r) = \emptyset$, and if $K \cap \S^{1}(r) \neq \emptyset$, then $\sym_L(K) \cap \S^{1}(r)$ is the closed circular arc around $L$ with the same $1$-dimensional Hausdorff measure as $K \cap \S^{1}(r)$. The symmetrization theorem gives a lower bound for $\capac(K, U)$ using a symmetrized condenser.

\begin{thm}\label{thm:capacity_symmetrization}
	Let $U \subset \C$ be open and let $K \subset U$ be compact. Let $L$ be a ray originating from the origin. Then
	\[
		\capac(K, U) \geq \capac(\sym_L(K), \C \setminus \sym_{-L}(\C \setminus U)).
	\]
\end{thm}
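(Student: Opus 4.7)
The plan is to prove the inequality by the classical technique of circular symmetrization of admissible test functions, combined with a Pólya--Szegő-type inequality for circular rearrangements. The strategy follows the standard treatment in \cite[Section 7]{Vuorinen_book}.

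First, by Remark \ref{rem:admissible_functions}, it suffices to take an arbitrary $u \in W^{1,2}_0(U) \cap C(U)$ with $u|_K \geq 1$ and $0 \leq u \leq 1$ (we may truncate without increasing the Dirichlet integral), extended by zero so that $u \in W^{1,2}(\C)$ with $u \equiv 0$ on $\C \setminus U$. I would then define the \emph{circular symmetrization} $u^*$ relative to $L$ level-set by level-set: for each $r > 0$ and $t \in [0,1]$, the set $\{\theta \in [0, 2\pi) : u^*(re^{i\theta}) > t\}$ is the open arc of $\S^{1}(r)$ centered on $L \cap \S^{1}(r)$ whose $\mathcal{H}^1$-measure equals that of $\{\theta : u(re^{i\theta}) > t\}$. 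By construction $u^*$ is equimeasurable with $u$ on each circle, so $u^* \geq 1$ on $\sym_L(K)$, and $u^* \equiv 0$ on $\sym_{-L}(\C \setminus U)$ (which is the complementary arc on each circle, symmetric about $-L$).

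The key analytic step is the Pólya--Szegő inequality for circular symmetrization,
\[
    \int_{\C} \abs{\nabla u^*}^2 \dd z \leq \int_{\C} \abs{\nabla u}^2 \dd z.
\]
Writing $\abs{\nabla u}^2 = (\partial_r u)^2 + r^{-2}(\partial_\theta u)^2$ in polar coordinates and integrating over $\S^{1}(r)$ for fixed $r$, the angular term is controlled by the classical symmetric decreasing rearrangement inequality on the circle, while the radial term requires a coarea-formula argument together with the isoperimetric inequality for arcs on $\S^{1}(r)$. I expect this to be the main technical obstacle; however, the result is well-documented and can be invoked from standard references on quasiconformal symmetrization, e.g.\ \cite[Section 7.16]{Vuorinen_book}.

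To conclude, after a standard mollification and truncation argument to bring $u^*$ into the admissible class for the condenser $(\sym_L(K), \C \setminus \sym_{-L}(\C \setminus U))$ (valid because the arcs about $L$ and $-L$ on each circle are disjoint, since $K \cap (\C \setminus U) = \emptyset$ forces the total angular measure of the two arcs to be at most $2\pi$), we obtain
\[
    \capac(\sym_L(K), \C \setminus \sym_{-L}(\C \setminus U)) \leq \int_{\C} \abs{\nabla u^*}^2 \dd z \leq \int_U \abs{\nabla u}^2 \dd z.
\]
Taking the infimum over all admissible $u$ yields the claimed inequality.
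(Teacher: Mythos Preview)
The paper does not prove this statement at all; it is quoted as a known symmetrization theorem and attributed directly to \cite[Section 7.16]{Vuorinen_book}. Your proposal supplies the standard proof outline from that reference---circular symmetrization of an admissible function plus the P\'olya--Szeg\H{o} inequality---so there is nothing to compare beyond noting that you have reconstructed the argument the paper merely cites.
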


The second fact we require is that the capacity of line segments approaching the boundary of $\D$ tends to infinity. The following formulation of this result follows from the basic properties of the capacity of the Gr\"otszch ring (see e.g.\ \cite[Section 7.18, (7.23)]{Vuorinen_book}) by using a conformal change of variables.

\begin{lemma}\label{lem:line_segment_capacity}
	Let $a \in (0,1)$, and let $\eps \in (0, 1-a)$. Then
	\[
		\lim_{\eps \to 0} \capac\bigl([a, 1-\eps], \D\bigr) = \infty.
	\]
\end{lemma}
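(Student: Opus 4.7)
The plan is to reduce the assertion to the classical asymptotic behavior of the Gr\"otzsch ring capacity by a conformal change of variables. First, I recall that the conformal capacity \eqref{eq:capacity} is preserved under biholomorphic self-maps: given a conformal diffeomorphism $\phi \colon U \to V$ and a compact $K \subset U$, the pushforward $u \circ \phi^{-1}$ of any admissible function for $(K, U)$ is admissible for $(\phi(K), V)$ with the same Dirichlet energy, since the planar Dirichlet integral is conformally invariant.

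The natural candidate for $\phi$ is the real M\"obius self-map of $\D$ given by $\phi_a(z) = (z-a)/(1-az)$, which fixes $\pm 1$, sends $a$ to $0$, and maps the segment $[a, 1-\eps]$ onto the segment $[0, s(\eps)]$, where
\[
	s(\eps) = \frac{1-a-\eps}{1-a+a\eps}.
\]
A direct computation shows that $s(\eps) \to 1^-$ as $\eps \to 0^+$. Conformal invariance then gives the identity
\[
	\capac\bigl([a, 1-\eps], \D\bigr) = \capac\bigl([0, s(\eps)], \D\bigr).
\]

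Finally, the condenser $([0, s], \D)$ is the standard Gr\"otzsch ring condenser, whose capacity diverges to $+\infty$ as $s \to 1^-$; equivalently, its conformal modulus decreases to zero. This is the classical estimate recalled in \cite[Section 7.18, (7.23)]{Vuorinen_book}, and combining it with the previous identity finishes the proof. I do not anticipate any substantial obstacle, since all three ingredients---conformal invariance of planar capacity, the explicit form of $\phi_a$, and the Gr\"otzsch asymptotic---are standard tools; at most one might want to spell out the easy monotonicity argument showing $s(\eps) \to 1^-$ and to verify that the definition of capacity used in \cite{Vuorinen_book} agrees with~\eqref{eq:capacity} up to a harmless positive constant, which does not affect divergence to $+\infty$.
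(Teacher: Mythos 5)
Your proposal is correct and follows essentially the same route the paper indicates: the paper's proof is just a citation of the Gr\"otzsch ring asymptotics together with the phrase ``by using a conformal change of variables,'' and you have simply filled in the details (the explicit M\"obius map $\phi_a$, the verification that $s(\eps)\to 1^-$, and conformal invariance of the Dirichlet integral). No gaps.
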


Finally, we recall that the conformal capacity is monotone: if $K \subset K' \subset U' \subset U$, then $\capac(K, U) \leq \capac(K', U')$. This is merely since any admissible function for $(K', U')$ is also admissible for $(K, U)$. By combining this monotonicity property with Theorem \ref{thm:capacity_symmetrization} and Lemma \ref{lem:line_segment_capacity}, the following important corollary immediately follows.

\begin{cor}\label{cor:capacity_goes_to_infinity}
	Let $K \subset \D$ be closed in $\D$. Suppose that there exists $r_0 \in [0, 1)$ such that for every $r \in (r_0, 1)$, we have $K \cap \S^{1}(r) \neq \emptyset$. Then
	\[
		\lim_{r \to \infty} \capac(K \cap \overline{\D(r)}, \D) = \infty.
	\]
\end{cor}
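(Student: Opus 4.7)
The plan is to chain the three preceding ingredients: the monotonicity of $\capac$, the symmetrization Theorem \ref{thm:capacity_symmetrization}, and the line-segment asymptotic Lemma \ref{lem:line_segment_capacity}. The underlying picture is that the hypothesis forces $K$ to meet every sufficiently large circle, so after a spherical symmetrization against a ray $L$, the set $K \cap \overline{\D(r)}$ contains a full radial segment reaching toward the boundary of $\D$, and the capacity of that segment already blows up.

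Concretely, I would fix any $a \in (r_0, 1)$ and let $L$ denote the positive real axis. For $r \in [a, 1)$, write $K_r := K \cap \overline{\D(r)}$. By hypothesis, $K \cap \S^{1}(s) \neq \emptyset$ for every $s \in [a, r]$, and since $\S^{1}(s) \subset \overline{\D(r)}$ for such $s$ we also have $K_r \cap \S^{1}(s) \neq \emptyset$. The definition of $\sym_L$ then forces $\sym_L(K_r) \cap \S^{1}(s)$ to be a (possibly degenerate) closed arc centered at $L \cap \S^{1}(s)$; in either case it contains the single point $L \cap \S^{1}(s) = \{s\}$. Hence the segment $[a, r]$ sits inside $\sym_L(K_r)$. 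For the outer set, every circle $\S^{1}(s)$ with $s > 1$ lies entirely in $\C \setminus \D$ and has $1$-dimensional Hausdorff measure $2\pi s$, so $\sym_{-L}(\C \setminus \D) \cap \S^{1}(s) = \S^{1}(s)$ for each such $s$. Therefore $\sym_{-L}(\C \setminus \D) = \C \setminus \D$, and $\C \setminus \sym_{-L}(\C \setminus \D) = \D$.

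Applying Theorem \ref{thm:capacity_symmetrization} and then monotonicity of $\capac$ yields
\[
	\capac(K_r, \D) \;\geq\; \capac\bigl(\sym_L(K_r),\, \D\bigr) \;\geq\; \capac\bigl([a, r],\, \D\bigr),
\]
and the right-hand side tends to $\infty$ as $r \to 1^{-}$ by Lemma \ref{lem:line_segment_capacity} (applied with $\eps = 1-r$). There is no substantial obstacle in this proof; the statement is essentially advertised as immediate. The only point that merits any care is unpacking the definition of $\sym_L$ in the degenerate case when $K_r \cap \S^{1}(s)$ is a single point and so the symmetrized arc has zero $1$-dimensional Hausdorff measure, to confirm that it still contains the point $L \cap \S^{1}(s)$ needed to assemble the radial segment.
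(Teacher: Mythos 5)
Your argument is correct and is exactly the chain the paper intends: apply the symmetrization theorem with $L$ a fixed ray so that $\sym_L(K\cap\overline{\D(r)})$ contains the segment $[a,r]$, note that $\sym_{-L}(\C\setminus\D)=\C\setminus\D$, then invoke monotonicity of capacity and the Grötzsch-ring asymptotic of Lemma \ref{lem:line_segment_capacity}. The paper merely declares the corollary immediate from these three facts, and your writeup supplies precisely that deduction (correctly reading the typo $\lim_{r\to\infty}$ as $\lim_{r\to 1^-}$).
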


\subsection{Topological lemmas on Sobolev boundary conditions}

Our key application of conformal capacity is a topological consequence of Sobolev boundary conditions. We use the following lemma numerous times throughout the text as essentially our replacement to boundary continuity in topological arguments.

\begin{lemma}\label{lem:capacity_to_topology_lemma}
	Suppose that $h \in C(\D, \C) \cap W^{1,2}(\D, \C)$, and that $h - f \in W^{1,2}_0(\D, \C)$ for some continuous $f \in C(\overline{\D}, \C)$. Then for every connected set $C \subset \D$ with $\overline{C} \cap \S^1 \neq \emptyset$, we have
	\[
		f(\overline{C} \cap \S^1) \cap \overline{h(C)} \neq \emptyset.
	\]
\end{lemma}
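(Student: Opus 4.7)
The plan is to argue by contradiction using the capacity blow-up of Corollary~\ref{cor:capacity_goes_to_infinity}. Suppose $f(\overline{C} \cap \S^1) \cap \overline{h(C)} = \emptyset$, and write $B = \overline{C} \cap \S^1$ and $A = \overline{h(C)}$. Then $B$ is a nonempty compact subset of $\S^1$, so $f(B)$ is compact, and by the contradiction hypothesis $\delta := \dist(f(B), A) > 0$. I would introduce the Lipschitz cutoff $\phi \colon \C \to [0,1]$ defined by
\[
	\phi(w) = \max\bigl(0,\, 1 - (3/\delta)\dist(w, f(B))\bigr),
\]
so $\phi \equiv 1$ on $f(B)$ and $\phi \equiv 0$ on $A$, and the candidate Sobolev function
\[
	u := \phi \circ f - \phi \circ h.
\]

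The key technical point is to check $u \in W^{1,2}_0(\D) \cap C(\D)$. Continuity on $\D$ is clear since $h \in C(\D, \C)$, $f \in C(\overline{\D}, \C)$ and $\phi$ is Lipschitz. The chain rule for Lipschitz compositions places $\phi \circ h, \phi \circ f \in W^{1,2}(\D)$; the trace operator commutes with composition by a scalar Lipschitz function, and since $h$ and $f$ share a trace, $\phi \circ h$ and $\phi \circ f$ also share a trace, giving $u \in W^{1,2}_0(\D)$. Next, on $C$ we have $h(C) \subset A$, so $\phi \circ h \equiv 0$ and $u = \phi \circ f$ on $C$. Using the uniform continuity of $f$ on $\overline{\D}$, choose $\eta > 0$ such that $\phi \circ f > 1/2$ on $\{z \in \overline{\D} : \dist(z, B) < \eta\}$, and then deduce from $\overline{C} \cap \S^1 = B$ via a short compactness argument the existence of $r^* \in (0,1)$ such that every $z \in C$ with $|z| > r^*$ satisfies $\dist(z, B) < \eta$. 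Consequently $u > 1/2$ on $C \setminus \overline{\D(r^*)}$, and by continuity $2u \geq 1$ on the set $K := \overline{C \setminus \overline{\D(r^*)}} \cap \D$, which is closed in $\D$ (where the overline denotes closure in $\C$).

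To finish, pick any $z_0 \in C$. Connectedness of $C$ together with $\sup_{z \in C}|z| = 1$ (forced by $\overline{C} \cap \S^1 \neq \emptyset$) makes $|C|$ an interval reaching $1$, hence $|C| \supset [|z_0|, 1)$; for each $r \in (\max(r^*, |z_0|), 1)$ the circle $\S^1(r)$ then meets $C \setminus \overline{\D(r^*)} \subset K$. Corollary~\ref{cor:capacity_goes_to_infinity} therefore yields $\capac(K \cap \overline{\D(r)}, \D) \to \infty$ as $r \to 1$. On the other hand, $2u$ is admissible for each condenser $(K \cap \overline{\D(r)}, \D)$ by Remark~\ref{rem:admissible_functions}, hence $\capac(K \cap \overline{\D(r)}, \D) \leq 4 \int_\D |\nabla u|^2 \dd z$, a fixed finite quantity; contradiction. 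The main obstacle is the very first step: recognising that the vectorial Sobolev boundary condition $h - f \in W^{1,2}_0(\D, \C)$ must be converted into a scalar admissible function by post-composing with a Lipschitz cutoff that separates $\overline{h(C)}$ from $f(\overline{C} \cap \S^1)$; once this construction is in place, the uniform continuity of $f$ on $\overline{\D}$ and the capacity blow-up do the rest.
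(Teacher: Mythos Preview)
Your proof is correct and follows the same contradiction-via-capacity strategy as the paper: build a continuous function in $W^{1,2}_0(\D)$ that is bounded below on a tail of $C$ reaching $\S^1$, then invoke Corollary~\ref{cor:capacity_goes_to_infinity}. The only substantive difference is in how the scalar test function is manufactured. The paper sets
\[
g = \frac{2\sqrt{2}}{d_0}\max\bigl(|\Re(h-f)|,\,|\Im(h-f)|\bigr),
\]
which lies in $W^{1,2}_0(\D)$ directly by the lattice stability of that space (citing \cite[Lemma 1.23]{Heinonen-Kilpelainen-Martio_book}), and observes that $|h-f|\ge d_0$ on $(\overline{C}\setminus\D(r_0))\cap\D$ because $f(\overline{C}\setminus\D(r_0))$ is a compact set disjoint from $\overline{h(C)}$. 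You instead post-compose $f$ and $h$ separately with a Lipschitz cutoff $\phi$ supported near $f(B)$ and subtract; this is equally valid but requires the (true, standard) fact that Lipschitz post-composition preserves equality of traces. The paper's construction is marginally more elementary in that it sidesteps that lemma; yours makes the separate roles of $h$ (sending $C$ into $A$) and $f$ (sending a boundary collar of $C$ near $f(B)$) more visible. Neither route offers a real advantage over the other.
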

\begin{proof}
	Suppose towards contradiction that the intersection is empty. It follows that $\overline{C} \cap \S^1 \cap f^{-1}\overline{h(C)} = \emptyset$; that is, the set $\overline{C} \cap f^{-1} \overline{h(C)}$ does not meet $\S^1$. Since $f^{-1} \overline{h(C)}$ and $\overline{C}$ are compact, it follows that $\overline{C} \cap f^{-1} \overline{h(C)}$ is a compact subset of $\D$. Consequently, there exists $r_0 \in (0,1)$ such that $\overline{C} \cap f^{-1} \overline{h(C)} \subset \D(r_0)$.
	
	In particular, we now have that $\overline{C} \setminus \D(r_0)$ does not meet $f^{-1} \overline{h(C)}$. It follows that $f(\overline{C} \setminus \D(r_0)) \cap \overline{h(C)} = \emptyset$. Since $f(\overline{C} \setminus \D(r_0))$ is compact and $\overline{h(C)}$ is closed, these two sets must therefore have positive distance $d_0 := d(f(\overline{C} \setminus \D(r_0)), \overline{h(C)}) > 0$ from each other. Note that since $h$ is continuous in $\D$, we have $h((\overline{C} \setminus \D(r_0)) \cap \D) \subset h(\overline{C} \cap \D) \subset \overline{h(C)}$. Hence, we have that $\abs{f - h} \geq d_0$ on $(\overline{C} \setminus \D(r_0)) \cap \D$.
	
	We then define
	\[
		g = \frac{2\sqrt{2}}{d_0} \max(\abs{\Re(h-f)}, \abs{\Im(h-f)}).
	\]
	The map $g$ is continuous, $g \geq 1$ on $(\overline{C} \setminus \D(r_0)) \cap \D$, and $g \in W^{1,2}_0(\D)$ by e.g.\ \cite[Lemma 1.23]{Heinonen-Kilpelainen-Martio_book}. Hence,
	\[
		\capac(\overline{C} \cap (\overline{\D(r)} \setminus \D(r_0)), \D) \leq \norm{\nabla g}_{L^n}^n < \infty
	\]
	when $r \in (r_0, 1)$; see Remark \ref{rem:admissible_functions}.
	
	However, since $C$ is a continuum in $\D$ with $\overline{C} \cap \S^1 \neq \emptyset$, if we have $C \cap \S^1(r) \neq \emptyset$ for some $r \in (0, 1)$, we must then have $C \cap \S^1(r') \neq \emptyset$ for all $r' \in (r, 1)$. Indeed, otherwise the two components of $\D \setminus \S^1(r')$ would yield a separation of $C$. It follows that $(\overline{C} \setminus \D(r_0))$ meets $\S^1(r)$ for all sufficiently large $r \in (0, 1)$. Hence, by Corollary \ref{cor:capacity_goes_to_infinity}, we have
	\[
		\lim_{r \to 1} \capac(\overline{C} \cap (\overline{\D(r)} \setminus \D(r_0)), \D) = \infty.
	\]
	This is a contradiction, and the claim is hence proven.
\end{proof}

We point out a corollary of Lemma \ref{lem:capacity_to_topology_lemma}, which is of significant importance when we consider degree theory.

\begin{cor}\label{cor:capacity_to_topology_preimage}
	Suppose that $h \in C(\D, \C) \cap W^{1,2}(\D, \C)$, and that $h - f \in W^{1,2}_0(\D, \C)$ for some continuous $f \in C(\overline{\D}, \C)$. Let $A \subset \C$ be such that $\overline{A} \cap f(\S^1) = \emptyset$, and let $C$ be a connected component of $h^{-1} A$. Then $\overline{C} \subset \D$.
\end{cor}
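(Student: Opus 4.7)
The plan is to argue by contradiction and directly apply Lemma \ref{lem:capacity_to_topology_lemma} to the connected component $C$. First I would suppose that $\overline{C} \not\subset \D$. Since $C \subset \D$ is bounded, we have $\overline{C} \subset \overline{\D}$, and so $\overline{C} \not\subset \D$ forces $\overline{C} \cap \S^1 \neq \emptyset$. The pair $(h, f)$ satisfies the hypotheses of Lemma \ref{lem:capacity_to_topology_lemma}, and $C \subset \D$ is a connected set with $\overline{C} \cap \S^1 \neq \emptyset$, so the lemma yields
\[
f(\overline{C} \cap \S^1) \cap \overline{h(C)} \neq \emptyset.
\]

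To close the argument, I would then observe two easy inclusions. Since $C$ is a subset of $h^{-1}A$, we have $h(C) \subset A$, hence $\overline{h(C)} \subset \overline{A}$; and trivially $f(\overline{C} \cap \S^1) \subset f(\S^1)$. Combining these,
\[
f(\overline{C} \cap \S^1) \cap \overline{h(C)} \subset f(\S^1) \cap \overline{A} = \emptyset,
\]
where the last equality is the standing hypothesis $\overline{A} \cap f(\S^1) = \emptyset$. This contradicts the previous display, so $\overline{C} \cap \S^1 = \emptyset$ and $\overline{C} \subset \D$, as required.

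Since the corollary is essentially a repackaging of Lemma \ref{lem:capacity_to_topology_lemma}, I do not anticipate any genuine obstacle; all the analytic content (the conformal-capacity argument that pushes the continuum towards $\S^1$) has already been carried out in the proof of the lemma. The only point to be careful about is that no regularity or openness assumption on $A$ is needed: the two inclusions used above rely solely on $h(C) \subset A$ and on set-theoretic monotonicity of closures, and $C$ is connected by the very definition of a connected component.
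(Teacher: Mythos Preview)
Your proof is correct and follows essentially the same approach as the paper: a contradiction argument invoking Lemma~\ref{lem:capacity_to_topology_lemma}, followed by the inclusions $\overline{h(C)} \subset \overline{A}$ and $f(\overline{C}\cap\S^1) \subset f(\S^1)$ to contradict the hypothesis $\overline{A}\cap f(\S^1)=\emptyset$.
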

\begin{proof}
	Suppose towards contradiction that $\overline{C} \cap \S^1 \neq \emptyset$. Then Lemma \ref{lem:capacity_to_topology_lemma} applies, and yields $f(\overline{C} \cap \S^1) \cap \overline{h(C)} \neq \emptyset$. However, since $C$ is contained in the pre-image of $A$, we have $f(\overline{C} \cap \S^1) \cap \overline{h(C)} \subset f(\S^1) \cap \overline{A} = \emptyset$; a contradiction, which proves the claim.
\end{proof}

\section{$K$-oscillation property}
In this section we recall that weakly monotone mappings with finite Dirichlet integral are continuous. The notion of weak monotonicity is due to Manfredi~\cite{Ma}, and is a  powerful tool when dealing with continuity properties of Sobolev functions. Roughly speaking, if $U \subset \C$ is open and $u \colon  U \to \C$ is a Sobolev mapping, then $u$ is weakly monotone if both coordinate functions $\Re u$ and $\Im u$ satisfy the maximum and minimum principles in the Sobolev sense in all balls $B \subset U$. For a more precise definition, see~\cite[p. 532]{AIMb} or~\cite[p. 395]{Ma}. 

We consider a slightly more general notion compared to the usual weak monotonicity given in~\cite{Ma}.
\begin{defn} \label{diamdefin}
Let $U \subset \C$ be open and $K\ge 1$.  A Sobolev mapping $H\colon U\to\C$ is said to have the $K$-\emph{oscillation property} if for every $z \in U$ and almost every $r \in (0, d(z, \C \setminus U))$ we have
\[
	\underset{B^2(z,r)}{\osc} H  \leq K\underset{\partial B^2(z,r)}{\osc} H \, . 
\]
\end{defn}
Here $\underset{K}{\osc} \, H := \sup \{\abs{H(x)-H(x')} \colon x,x'\in  K \}$.

\begin{rem}
	We note that continuous monotone Sobolev maps satisfy the $1$-oscillation property. Indeed, the $1$-oscillation property is satisfied by homeomorphisms and is preserved in uniform limits, which implies the property for monotone maps by e.g.\ \cite[Remark 3.1]{Iwaniec-Onninen_Hopf-Harmonics}.
	
	The converse does not hold, as unlike in the case of a continuous monotone map, weakly monotone maps and maps with the $K$-oscillation property may in fact cause some types of folding. For a standard example, let $Q =(0,2) \times (0,2)$ and
	\[
		g \colon \overline{Q} \onto \overline{Q} \qquad g(x,y) = 
		\begin{cases} (2x,y) \quad & \textnormal{if } 0\le x \le  1 \\ 
			(-2x+4, y) \quad & \textnormal{if } 1< x \le  2 \, . 
		\end{cases} 
	\]
	Then $g$ is continuous. Moreover, $g$ has the $1$-oscillation property in $Q$, and $g$ is also weakly monotone in the sense of \cite{Ma}. However, $g$ is not monotone.
\end{rem}
The proof of the following continuity result is standard; see e.g.\ \cite[Theorem 20.1.6]{AIMb} for the weakly monotone version.
\begin{lemma}\label{lem:oscwm} Let  $H\in W^{1,2} (B_R, \C)$ satisfy the $K$-oscillation property with $K\ge 1$ in the ball $B_R=B^2(z,R)$. Then for $r\in (0,R/2)$ we have
\[\left[\underset{B^2(z,r)}{\osc} H \right]^2 \le \frac{C}{\log \left( \frac{R}{r}\right)} \int_{B_R} \abs{DH}^2\]
for some constant $C \ge 1$ depending only on $K$. In particular, $H$ is continuous.
\end{lemma}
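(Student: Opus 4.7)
The plan is to run the classical Courant--Lebesgue argument, using the $K$-oscillation property to convert a circular oscillation bound into a logarithmic energy estimate.

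First, I would fix $z$ and note two a.e.\ facts about $\rho \in (0, R)$: by the ACL characterization of $W^{1,2}$ in polar coordinates, $\theta \mapsto H(z + \rho e^{i\theta})$ is absolutely continuous for a.e.\ $\rho$; and the hypothesis provides $\osc_{B^2(z,\rho)} H \le K \osc_{\partial B^2(z,\rho)} H$ for a.e.\ $\rho$. On the intersection of these two full-measure sets, the fundamental theorem of calculus on the circle gives
\[
	\underset{\partial B^2(z,\rho)}{\osc} H \;\le\; \int_0^{2\pi} \abs{\partial_\theta H(z + \rho e^{i\theta})} \,\dd\theta,
\]
and an application of Cauchy--Schwarz on $[0,2\pi]$ yields
\[
	\Bigl(\underset{\partial B^2(z,\rho)}{\osc} H\Bigr)^{\!2} \;\le\; 2\pi \int_0^{2\pi} \abs{\partial_\theta H(z + \rho e^{i\theta})}^2 \,\dd\theta.
\]

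Next, since $B^2(z,r) \subset B^2(z,\rho)$ for $\rho > r$, the oscillation is monotone in the set and so, combining with the $K$-oscillation inequality,
\[
	\Bigl(\underset{B^2(z,r)}{\osc} H\Bigr)^{\!2} \;\le\; K^2 \Bigl(\underset{\partial B^2(z,\rho)}{\osc} H\Bigr)^{\!2} \;\le\; 2\pi K^2 \int_0^{2\pi} \abs{\partial_\theta H(z + \rho e^{i\theta})}^2 \,\dd\theta.
\]
Dividing by $\rho$ and integrating over $\rho \in (r, R)$, the left-hand side produces the desired factor $\log(R/r)$, while on the right-hand side the pointwise bound $\abs{\partial_\theta H(z + \rho e^{i\theta})}^2 \le \rho^2 \abs{DH(z + \rho e^{i\theta})}^2$ gives $\rho^{-1}\abs{\partial_\theta H}^2 \le \rho \abs{DH}^2$. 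Since $\rho\,\dd\rho\,\dd\theta$ is precisely the area element in polar coordinates, the resulting double integral equals $\int_{B^2(z,R)\setminus B^2(z,r)} \abs{DH}^2 \le \int_{B_R} \abs{DH}^2$. This yields
\[
	\Bigl(\underset{B^2(z,r)}{\osc} H\Bigr)^{\!2} \log\!\Bigl(\frac{R}{r}\Bigr) \;\le\; 2\pi K^2 \int_{B_R} \abs{DH}^2,
\]
so the claim holds with $C = 2\pi K^2$; the restriction $r < R/2$ simply ensures the $\log$ factor is bounded away from $0$. Continuity at $z$ then follows by letting $r \to 0$ for a fixed $R < d(z, \C \setminus B_R)$, since the right-hand side tends to zero while the oscillation is monotone in $r$.

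This is a routine argument and I do not anticipate a serious obstacle; the only bookkeeping point is verifying that the $K$-oscillation inequality and the ACL property both hold on full-measure sets of radii, so that the combined inequality is available on a full-measure set of $\rho$ over which we integrate. Everything else is fundamental theorem of calculus, Cauchy--Schwarz, and polar coordinates.
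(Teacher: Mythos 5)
Your proof is correct and follows essentially the same Courant--Lebesgue argument as the paper's: absolute continuity on almost all circles, Cauchy--Schwarz to pass from the circular oscillation to the circular Dirichlet integral, then dividing by the radius and integrating to produce the logarithmic factor and recover the full Dirichlet energy by Fubini in polar coordinates. The only cosmetic difference is that you parametrize by the angle $\theta$ whereas the paper integrates with respect to arc length, but these are equivalent under the substitution $\dd s = \rho\,\dd\theta$.
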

\begin{proof}
Since $H\in W^{1,2} (B_R, \C)$, the coordinate functions $\Re H$ and $\Im H$ are absolutely continuous on almost all of the
circles $\partial B_t=S_t$, $0<t<R$. It follows that, for almost every $t\in (0,R)$,
\begin{equation}\label{eq:osc}
 \underset {S_t}{\osc} H  \le C \int_{S_t} \abs{DH} \le C \, t^\frac{1}{2} \left( \int_{S_t} \abs{DH}^2 \right)^\frac{1}{2} \, .
 \end{equation}
For almost every $t \in (r,R)$ we have
\[ \underset{B_r}{\osc} H \le  \underset{B_t}{\osc} H \le K \,   \underset{S_t}{\osc} H \, . \]
Combining this with~\eqref{eq:osc} we obtain
\[ \left(  \underset{B_r}{\osc} H \right)^2 \int_r^R \frac{\dtext t}{t} \le C \int_{B_R \setminus B_r} \abs{DH}^2 \, , \]
as desired.
\end{proof}

\section{Degree theory}

Suppose that $U \subset \R^n$ is open, and that $h \in C(\overline{U}, \R^n)$. Then the classical \emph{Brouwer degree} $\deg(h, y, U)$ is well defined for every $y \notin h(\partial U)$. For further details on classical degree theory, we refer to e.g.\ \cite{Fonseca-Gangbo-book}.

In our case, however, we need to be a bit more careful with the use of degree theory. This is because we are mostly dealing with mappings which are not a priori assumed to be continuous up to the boundary. In our main result, Theorem \ref{thm:main}, it is only assumed that $h$ is continuous in $\D$ and has a continuous Sobolev trace on the boundary. We note that there does exist literature on degree theory in settings more general than ours, such as the degree theory of Nirenberg and Brezis \cite{Nirenberg-Brezis_BMO-Degree-boundary} for $\vmo$-maps. However, we have found no account which includes all the results we require, and we found it easier to derive the desired results in our specific setting using the classical degree theory.

\subsection{Sobolev boundary conditions and topology}

We now consider a $h \in C(\D, \C) \cap W^{1,2}(\D, \C)$ such that $h$ satisfies the Lusin (N) -condition, and $h - f \in W^{1,2}_0(\D, \C)$ for some continuous $f \in C(\overline{\D}, \C)$. It hence follows that the trace $h\vert\S^1$ equals the restriction $f \vert \S^1$, and is hence continuous. However, $h$ is not necessarily continuous up to the boundary.

By using Corollary \ref{cor:capacity_to_topology_preimage}, we are able to define a topological degree for $h$ with Sobolev boundary values in the following situation.

\begin{lemma}\label{lem:degree_in_preimage}
	Suppose that $h \in C(\D, \C) \cap W^{1,2}(\D, \C)$ satisfies the Lusin (N) -condition, and that $h - f \in W^{1,2}_0(\D, \C)$ for some continuous $f \in C(\overline{D}, \C)$. Let $V \subset \C$ be a bounded, connected open set such that $d_0 := d(f(\S^1), \overline{V}) > 0$ and $m_2(\partial V) = 0$. Let $U$ be a connected component of $h^{-1} V$.
	
	Then $\overline{U} \subset \D$, and for every $y \in V$, $\deg(h, y, U)$ is well defined. Moreover, for every such $y$ we have
	\[
		\deg(h, y, U) = \frac{1}{m_2(V)} \int_U J_h(z) \dd z.
	\]
	In particular, the function $y \mapsto \deg(h, y, U)$ is constant on $V$.
\end{lemma}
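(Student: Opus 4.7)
The plan is to use Corollary \ref{cor:capacity_to_topology_preimage} to confine $\overline{U}$ in $\D$, then verify that $h(\partial U) \subset \partial V$ so the classical Brouwer degree is defined and locally constant on $V$, and finally read off its value from the standard degree/area formula for continuous Sobolev maps with Lusin (N).

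Since $d_0 > 0$ gives $\overline{V} \cap f(\S^1) = \emptyset$, Corollary \ref{cor:capacity_to_topology_preimage} applied with $A = V$ immediately yields $\overline{U} \subset \D$, so $h$ is continuous on the compact set $\overline{U}$. Next I would show $h(\partial U) \subset \partial V$: if some $z \in \partial U$ had $h(z) \in V$, then continuity of $h$ on $\D$ would produce an open neighborhood $W$ of $z$ in $\D$ with $h(W) \subset V$, making $W \cup U$ a connected subset of $h^{-1}(V)$; since $U$ is a connected component, this would force $W \subset U$ and contradict $z \in \partial U$. Because $V$ is connected and disjoint from $h(\partial U) \subset \partial V$, the degree $\deg(h, y, U)$ is well defined on $V$ and takes a single integer value $d$ there. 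The same inclusion shows $\deg(h, y, U) = 0$ for $y \in \C \setminus \overline{V}$, since then $h^{-1}(y) \cap \overline{U}$ is empty.

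It remains to identify $d$, for which I would invoke the standard degree formula for continuous Sobolev maps with Lusin (N): for any bounded Borel $\varphi \colon \C \to \R$ vanishing on $h(\partial U)$,
\[
	\int_U \varphi(h(z)) J_h(z) \dd z = \int_{\C} \varphi(y) \deg(h, y, U) \dd y.
\]
The passage from continuous compactly supported $\varphi$ to this Borel class is by dominated convergence, legitimised by the observation that $h(\partial U)$ has Lebesgue measure zero (Lusin (N) applied to the measure-zero set $\partial U$) and that $\deg(h, \cdot, U)$ is a bounded function compactly supported in $h(\overline{U})$. Taking $\varphi = \chi_V$ is permitted because $\chi_V$ vanishes on $\partial V \supset h(\partial U)$; the left-hand side reduces to $\int_U J_h \dd z$ since $h(U) \subset V$, and the right-hand side to $d \cdot m_2(V)$ using the analysis above together with the hypothesis $m_2(\partial V) = 0$. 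The main delicate point is this last approximation step, and the measure-zero boundary hypothesis on $V$ is placed in the statement precisely to facilitate it.
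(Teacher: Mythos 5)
Your proposal is correct and follows essentially the same approach as the paper: Corollary \ref{cor:capacity_to_topology_preimage} gives $\overline{U} \subset \D$, topological arguments give $h(\partial U) \cap V = \emptyset$ (hence well-definedness and constancy of the degree on $V$), and the Jacobian formula follows from the degree/area identity for continuous Sobolev maps with Lusin (N), applied to $\chi_V$. The paper simply cites Fonseca--Gangbo for that identity rather than spelling out the dominated-convergence approximation you include, and its argument for $h(\partial U) \subset \partial V$ is phrased slightly differently (using that $U$ is both closed in $h^{-1}V$ and open in $\C$), but these are cosmetic differences.
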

\begin{proof}
	We note that since $\D$ is locally connected and $h^{-1} V$ is open, the components of $h^{-1} V$ are also open. Consequently, $U$ is open and connected. Since $f(\S^1) \cap \overline{V} = \emptyset$, it follows from Corollary \ref{cor:capacity_to_topology_preimage} that $\overline{U} \subset \D$.
	
	We note that we must necessarily have $h(\partial U) \subset \partial V$. Indeed, since $U$ is a connected component of $h^{-1} V$, $U$ is also closed in $h^{-1} V$, and since $U$ is also open in $\C$, no element of $\partial{U}$ can be in $h^{-1} V$. Therefore, $h(\partial U) \subset \C \setminus V$. On the other hand, since $\partial U \subset \D$ and $h$ is continuous in $\D$, we have $h(\partial U) \subset \overline{h(U)} \subset \overline{V}$.
	
	Now, since $h$ is continuous on $\overline{U}$ and $V \cap h(\partial U) = \emptyset$, the degree $\deg(h, y, U)$ is well defined for any $y \in V$. Since $V$ is connected and doesn't meet $h(\partial U)$, it also follows that $y \mapsto \deg(h, y, U)$ is constant in $V$; see e.g.\ \cite[Theorem 2.3 (3)]{Fonseca-Gangbo-book}. 
	
	For the Jacobian formula, note that if $y \notin \overline{V}$, we must therefore have $\deg(h, y, U) = 0$ by e.g.\ \cite[Theorem 2.1]{Fonseca-Gangbo-book} Since $\overline{U} \subset \D$, $h(\partial U) \subset \partial V$, and since $h$ satisfies the Lusin (N) -property, we therefore obtain using e.g.\ \cite[Proposition 5.25 and Remark 5.26 (ii)]{Fonseca-Gangbo-book} that
	\[
	\int_U J_f(z) \dd z = \int_\C \deg(h, z', U) \dd z' = m_2(V) \deg(h, y, U)
	\]
	for every $y \in V$, which implies the given Jacobian formula.
\end{proof}

This lemma allows us to define the degree of $h$ in a pre-image.

\begin{defn}\label{def:degree_in_preimage}
	Let $h$, $f$, and $V$ be as in Lemma \ref{lem:degree_in_preimage}. Let $U_1, U_2, \dots$ be the connected components of $h^{-1} V$. Note that since $U_i$ are open and disjoint, there are at most countably many of them. We define
	\[
		\deg(h, y, h^{-1} V) = \sum_{i} \deg(h, y, U_i)
	\]
	for any $y \in V$. This sum converges to a finite value, since the Jacobian formula of Lemma \ref{lem:degree_in_preimage} implies that
	\[
		\sum_{i} \abs{\deg(h, y, U_i)}
		\leq \frac{1}{m_2(V)} \sum_i \int_{U_i} \abs{J_h(z)} \dd z
		\leq \frac{1}{m_2(V)} \int_{\D} \abs{Dh(z)}^2 \dd z
		< \infty.
	\]
\end{defn}

The remaining tool we require is that $\deg(h, y, h^{-1} V) = \deg(f, y, f^{-1} V)$ when $h$ and $f$ have the same trace. We restrict to the case where $V$ is a ball and $f$ is a continuous Sobolev map in the entire plane, as that is enough for us.

\begin{lemma}\label{lem:degree_equality lemma}
	Suppose that $h \in C(\D, \C) \cap W^{1,2}(\D, \C)$ satisfies the Lusin (N) -condition, and that $h - f \in W^{1,2}_0(\D, \C)$ for some $f \in C(\C, \C) \cap W^{1,2}_\loc(\C, \C)$ which also satisfies the Lusin (N) condition. Let $y \in \C$ and $r > 0$ be such that $d(f(\S^1), \overline{B^2(y, r)}) > 0$. Then
	\[
		\deg(h, y, h^{-1} B^2(y, r)) = \deg(f, y, \D \cap f^{-1} B^2(y, r)) \, . 
	\]
\end{lemma}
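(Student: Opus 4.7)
The plan is to combine the Jacobian formula from Lemma~\ref{lem:degree_in_preimage} with a Stokes-type integration by parts at the Sobolev level. The restriction of $f$ to $\D$ trivially satisfies the hypotheses of Lemma~\ref{lem:degree_in_preimage} with itself as the boundary data, since $f - f = 0 \in W^{1,2}_0(\D, \C)$. Applying that lemma to both $h$ and $f$, with $V = B^2(y, r)$, reduces the desired equality of degrees to the integral identity
\[
\int_{h^{-1} V} J_h \,\dtext z = \int_{\D \cap f^{-1} V} J_f \,\dtext z.
\]

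The central step is to show that the quantity $\int_\D \phi(g)\, J_g \,\dtext z$ depends only on the trace $g|_{\S^1}$ whenever $g \in W^{1,2}(\D, \C)$ and $\phi \in C_c^\infty(\C)$. Setting $Q(u, v) = \int_{-\infty}^u \phi(s, v) \,\dtext s$ gives a smooth, bounded, Lipschitz function on $\C$ with $\phi\, \dtext u \wedge \dtext v = \dtext(Q\, \dtext v)$. Pulling back by $g$ yields the distributional identity $\phi(g)\, J_g\, \dtext x \wedge \dtext y = \dtext(Q(g)\, \dtext g^2)$, so Stokes' theorem formally gives
\[
\int_\D \phi(g)\, J_g \,\dtext z = \int_{\S^1} Q(g)\, \dtext g^2,
\]
where the right-hand side is a duality pairing in $W^{1/2,2}(\S^1) \times W^{-1/2,2}(\S^1)$ between the trace of $Q(g)$ and the tangential derivative of $g^2|_{\S^1}$, depending only on $g|_{\S^1}$. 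Since $h - f \in W^{1,2}_0(\D, \C)$ forces $h|_{\S^1} = f|_{\S^1}$, this yields $\int_\D \phi(h)\, J_h = \int_\D \phi(f)\, J_f$ for every $\phi \in C_c^\infty(\C)$.

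To pass from smooth $\phi$ to the indicator $1_V$, I would take $\phi_k \in C_c^\infty(\C)$ with $\smallabs{\phi_k} \leq 1$ and $\phi_k \to 1_V$ pointwise on $\C \setminus \partial V$, then apply dominated convergence with the $L^1$ majorant $\smallabs{J_h}$. Pointwise convergence $\phi_k(h(z)) \to 1_V(h(z))$ holds on $\D \setminus h^{-1}(\partial V)$; on the exceptional set $h^{-1}(\partial V)$, the Lusin (N) condition for $h$ together with the area formula for Sobolev maps forces $J_h = 0$ almost everywhere, so the integrand vanishes there. An identical argument handles the $f$-side using Lusin (N) for $f$, completing the reduction.

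The main technical obstacle is justifying the Stokes identity at the $W^{1,2}$ level, where the 1-form $Q(g)\, \dtext g^2$ has only $L^2$ coefficients and its boundary trace must be given via fractional Sobolev duality. This is handled by approximating $g$ with smooth maps in $W^{1,2}(\D, \C)$ (via mollification of a $W^{1,2}_\loc(\C)$ extension), applying classical Stokes' theorem to each approximation, and passing to the limit: the left-hand side is continuous in $W^{1,2}$ convergence because $g \mapsto J_g$ is continuous $W^{1,2} \to L^1$ and $\phi$ is bounded, while the right-hand side is continuous by continuity of the trace operator $W^{1,2}(\D) \to W^{1/2,2}(\S^1)$ and the Lipschitz regularity of $Q$.
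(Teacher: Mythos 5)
Your argument is correct, but it takes a genuinely different route from the paper's. The paper reduces to $V = \D$ by an affine change of target, glues $h$ and $f$ outside $\D$ into a single map in $W^{1,2}(\C, \C)$, post-composes with a smooth approximation $\psi_j$ of the radial retraction onto $\overline\D$ (so the resulting maps are bounded and land in $W^{1,2}(\C) = W^{1,2}_0(\C)$, forcing $\int_\C J_{h_j} = \int_\C J_{f_j} = 0$), subtracts, and lets $j \to \infty$, invoking Lusin (N) exactly where you do --- to see that $J_h$ vanishes a.e.\ on $h^{-1}(\partial V)$. You instead work directly on $\D$: you use the null-Lagrangian structure $\phi(g)\,J_g = d\bigl(Q(g)\,dg_2\bigr)$ with $Q(u,v) = \int_{-\infty}^u \phi(s,v)\,ds$ to show that $g \mapsto \int_\D \phi(g)\,J_g$ is determined by the trace $g|_{\S^1}$, and then pass from smooth $\phi$ to the indicator $1_V$. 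The paper's approach exploits the ball geometry (the radial retraction) and so sidesteps fractional trace duality entirely, making the limit passage purely a dominated-convergence argument in $L^1(\C)$; yours is more general (it makes no use of $V$ being a ball and would extend to other bounded open $V$ with $m_2(\partial V) = 0$) and isolates the conceptual point that $\int_\D \phi(g)\,J_g$ is a boundary invariant, at the cost of the $W^{1/2,2} \times W^{-1/2,2}$ pairing. One small remark on your trace step: rather than passing through the boundary integral, it is slightly cleaner to note that $Q(h) - Q(f) \in W^{1,2}_0(\D) \cap L^\infty(\D)$ (Lipschitz composition preserves vanishing trace), write $Q(h)\,dh_2 - Q(f)\,df_2 = \bigl(Q(h) - Q(f)\bigr)\,dh_2 + Q(f)\,d(h_2 - f_2)$, and show each piece has vanishing integral of its exterior derivative by approximating the $W^{1,2}_0$ factor by $C_c^\infty(\D)$ --- this avoids having to verify that $Q$ acts boundedly on $W^{1/2,2}(\S^1)$.
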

\begin{proof}
	Note that we may assume $f \in W^{1,2}(\C, \C)$ by multiplying it with some $\eta \in C^\infty_0(\C)$ with $\eta \vert B^2(0, 2) \equiv 1$. By post-composing with an affine map, we may also assume that $y = 0$ and $r = 1$, and therefore $B^2(y, r) = \D$. We may extend $h$ to $\C$ by setting $h\vert(\C\setminus\D)= f\vert(\C\setminus\D)$; this extension is in $W^{1,2}(\C, \C)$, although it isn't necessarily continuous on $\S^1$.
	
	We then let $\psi_j \in C^\infty(\C, \C)$ be a smooth approximation of the radial retraction to $\D$; that is $\psi_j\vert(\D(1-j^{-1})) = \id$, $(\psi_j \vert (\C \setminus \D))(z) = z/\abs{z}$, and $\abs{D\psi_j} \leq 2$. We define $h_j = \psi_j \circ h$ and $f_j = \psi_j \circ f$. Since $h$ is a smooth 2-Lipschitz map, we have $h_j, f_j \in W^{1,2}(\C, \C)$.
	
	Since $W^{1,2}(\C, \C) = W^{1,2}_0(\C, \C)$, we have
	\[
		\int_\C J_{h_j}(z) \dd z = \int_\C J_{f_j}(z) \dd z = 0.
	\]
	Since $h_j \equiv f_j$ in $\C \setminus \D$, their Jacobians also coincide almost everywhere in $\C \setminus \D$. It follows that
	\[
		\int_\D J_{h_j}(z) \dd z = \int_\D J_{f_j}(z) \dd z.
	\]
	However, we have $J_{h_j} \to J_h \chi_{h^{-1}\D}$, since $J_{\psi_j} = 1$ in $\D(1-j^{-1})$ and $J_{\psi_j} = 0$ in $\C \setminus \D$. Similarly, $J_{f_j} \to J_f \chi_{f^{-1}\D}$, and we also have $\abs{J_{h_j}} \leq 4\abs{Dh}^2$ and $\abs{J_{f_j}} \leq 4\abs{Df}^2$. Hence, by dominated convergence, we get
	\[
		\int_{\D \cap h^{-1} \D} J_{h}(z) \dd z = \int_{\D \cap f^{-1} \D} J_{f}(z) \dd z.
	\]
	Now we obtain the claim by the fact that the degrees of $f$ and $h$ are constant for $y' \in B^2(y, r) = \D$, combined with the Jacobian formula for the degree. In the case of $h$, the versions of these results we use are precisely the ones proven in Lemma \ref{lem:degree_in_preimage}.
\end{proof}

\section{Connectedness of fibers}

We now begin the process of proving Theorem~\ref{thm:main}. As discussed in the introduction, we may assume $\X = \D$ by a conformal change of variables. Moreover, the ``only if''-part of the theorem was also explained to follow from the fact that a monotone $h \in W^{1,2}(\D, \C)$-map is a $W^{1,2}(\D, \C)$-limit of homeomorphisms, and therefore $J_h$ cannot change sign. Hence, only the ``if''-part remains to be proven here:

\begin{thm}\label{thm:hopf_monotonicity_upgraded}
 Let $\Y$ be Lipschitz regular and $f \colon \overline{\D} \onto \overline{\Y} $ an orientation-preserving homeomorphism in  $W^{1,2} (\D, \C)$. Suppose that a mapping $h \colon \D \to \C$ satisfies the Hopf-Laplace equation, $J_h \ge 0$ almost everywhere in $\D$, and $h\in f+W^{1,2}_0(\D, \C)$. Then $h$ extends to a continuous monotone mapping from $\overline{\mathbb D}$ onto $\overline{\Y}$ (up to redefining $h$ in a set of measure zero).
\end{thm}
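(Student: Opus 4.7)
The starting point is Theorem~\ref{thm:hopf_harm_lipschitz}: after redefining $h$ on a set of measure zero, $h$ is locally Lipschitz in $\D$, hence $h \in C(\D, \C)$ and satisfies Lusin's (N) condition. By Remark~\ref{rem:Sobolev_extension} I extend the boundary homeomorphism $f$ to a homeomorphism of $\C$ lying in $W^{1,2}_{\loc}(\C, \C)$, so that the hypotheses of Lemma~\ref{lem:degree_equality lemma} are satisfied. The proof then proceeds in three stages: (i) a structural property of the components of preimages of balls, coming from the Hopf differential; (ii) a degree-theoretic deduction of connectedness of interior fibers and of $h(\D) \subset \overline{\Y}$; and (iii) upgrading interior continuity to continuity up to $\S^1$.

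The heart of the argument is the claim that \emph{for every ball $V = B^2(y, r)$ with $\overline{V} \cap f(\S^1) = \emptyset$, every connected component $U$ of $h^{-1}(V)$ satisfies $\int_U J_h > 0$.} I argue by contradiction, assuming $J_h \equiv 0$ on $U$. By Corollary~\ref{cor:capacity_to_topology_preimage} we have $\overline{U} \subset \D$, and the standard reasoning reproduced in the proof of Lemma~\ref{lem:degree_in_preimage} yields $h(\partial U) \subset \partial V$. Two subcases arise for the Hopf product $\varphi = h_z \overline{h_{\bar z}}$. If $\varphi \equiv 0$ on $U$, combining this with $J_h = |h_z|^2 - |h_{\bar z}|^2 \equiv 0$ forces $h_z = h_{\bar z} = 0$ a.e.\ on $U$, so $h$ is constant on the connected open set $U$; by continuity this constant must lie both in $V$ and in $\partial V$, which is impossible. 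Otherwise $\varphi$, holomorphic on $\D$, has only isolated zeros, so $U$ contains a regular point $z_0$ of the Hopf differential $H = \varphi \, dz \otimes dz$. Lemma~\ref{lem:Hopf_harm_vert_traj_constant} then says that $h$ is constant along the vertical arc of $H$ through $z_0$, and by Lemma~\ref{lem:noncrit_traj_boundary_lemma} the maximal vertical trajectory through $z_0$ has two distinct endpoints on $\S^1$. Since $\overline{U} \subset \D$, the portion of this trajectory lying inside $U$ is a connected arc whose two endpoints lie in $\partial U \subset \D$; by continuity the constant value of $h$ on this arc must then lie in both $V$ and $\partial V$, a contradiction.

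With the claim in hand, Lemma~\ref{lem:degree_in_preimage} gives $\deg(h, y, U) = m_2(V)^{-1} \int_U J_h \geq 1$ for every component $U$. Summing over components and invoking Lemma~\ref{lem:degree_equality lemma}, I obtain
\[
	\#\{\text{components of } h^{-1}(V)\} \leq \sum_i \deg(h, y, U_i) = \deg(f, y, \D \cap f^{-1}(V)).
\]
For $y \in \Y$ and $r$ sufficiently small, the right-hand side equals $1$ (since $f$ is an orientation-preserving homeomorphism), forcing a unique component $U_r$ of $h^{-1}(V)$. Letting $r \downarrow 0$, the sets $\overline{U_r}$ form a nested family of compact connected subsets of $\D$ whose intersection equals $h^{-1}(\{y\})$, which is therefore nonempty, connected, and compactly contained in $\D$. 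For $y \notin \overline{\Y}$ and small $r$, the right-hand side vanishes, so $h^{-1}(V) = \emptyset$, giving $h(\D) \subset \overline{\Y}$.

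To finish, I extend $h$ continuously to $\S^1$ with boundary values $f\vert\S^1$. Suppose for contradiction that $z_n \to \zeta \in \S^1$ and $h(z_n) \to y_\infty \neq f(\zeta)$; by Stage~(ii) we have $y_\infty \in \overline{\Y}$. If $y_\infty \in \Y$, fix a small ball $V = B^2(y_\infty, \varepsilon)$ disjoint from $f(\S^1)$: by Stage~(ii), $h^{-1}(V)$ has only finitely many components, each with closure in $\D$, so by pigeonhole some component $C$ contains a subsequence of $(z_n)$, forcing $\zeta \in \overline{C}$ and contradicting $\overline{C} \subset \D$. If $y_\infty \in \partial\Y$, apply Lemma~\ref{lem:capacity_to_topology_lemma} to the connected sets $C_\rho = B^2(\zeta, \rho) \cap \D$: it yields points $p_\rho \in f(\overline{C_\rho} \cap \S^1) \cap \overline{h(C_\rho)}$ with $p_\rho \to f(\zeta)$, while $y_\infty \in \overline{h(C_\rho)}$ for every small $\rho > 0$ since $z_n \in C_\rho$ eventually. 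A capacitary oscillation estimate adapted to the boundary, in the spirit of Lemma~\ref{lem:oscwm} and using the $1$-oscillation property of $h$ on balls $B \subset \D$ inherited from the connectedness of interior fibers, then shows $\diam(h(C_\rho)) \to 0$, forcing $y_\infty = f(\zeta)$, a contradiction. Continuity up to the boundary established, $h \colon \overline{\D} \to \overline{\Y}$ is continuous and surjective (as $h(\S^1) = \partial\Y$ and $\Y \subset h(\D)$), and the remaining fibers $h^{-1}(\{y\})$ for $y \in \partial\Y$ are connected by a Hausdorff-limit argument over sequences $y_n \to y$ with $y_n \in \Y$. The main obstacle in this plan is Stage~(i): the trajectory analysis that converts $J_h \equiv 0$ on a component of $h^{-1}(V)$ into a value-theoretic contradiction is precisely where the Hopf-Laplace equation is essential; the rest is of topological or capacitary character.
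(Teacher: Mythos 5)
Your Stages~(i) and~(ii) are essentially correct and closely parallel the paper's Lemma~\ref{lem:zero_Jacobian_trajectory_lemma} and Lemma~\ref{lem:degree_lemma_interior}. Your Stage~(i) contradiction is slightly slicker: rather than first showing that the whole non-critical vertical trajectory $\gamma$ lies in $U$ and then invoking Lemma~\ref{lem:capacity_to_topology_lemma} to get $f(z_a) = f(z_b)$, you use $\overline{U} \subset \D$ (which follows from Corollary~\ref{cor:capacity_to_topology_preimage} under your standing restriction $\overline{V} \cap f(\S^1) = \emptyset$) to force $\gamma$ to exit $U$ at a point of $\partial U \subset \D$, and then read off the contradiction $c \in V \cap \partial V$ directly from $h(\partial U) \subset \partial V$. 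This is fine, but note that your version of Stage~(i) is proved only for balls with $\overline{V} \cap f(\S^1) = \emptyset$, whereas the paper's Lemma~\ref{lem:zero_Jacobian_trajectory_lemma} holds for arbitrary balls; this stronger form is used in the paper precisely for the boundary fibers, which brings us to the gap.

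The genuine gap is in Stage~(iii). The oscillation estimate you gesture at (``a capacitary oscillation estimate adapted to the boundary\dots shows $\diam(h(C_\rho)) \to 0$'') is Lemma~\ref{lem:weak_monotonicity} of the paper, and that lemma \emph{requires} as a hypothesis that $h^{-1}\{y\} \cup (f\vert\S^1)^{-1}\{y\}$ be connected for \emph{every} $y \in \overline{\Y}$, including $y \in \partial\Y$; this hypothesis is used in Case~1 of the Claim inside Lemma~\ref{lem:weak_monotonicity} and again in its final paragraph, where $\tau, h(\omega_0) \in \partial\Y$ and the connectedness of the preimage forces it to meet $\partial D_r$. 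You only establish connectedness of $h^{-1}\{y\}$ for $y \in \Y$ in Stage~(ii), and you propose to recover the boundary fibers \emph{afterward} ``by a Hausdorff-limit argument.'' That argument does not close the gap: a Hausdorff subsequential limit $L$ of the connected compacta $h^{-1}\{y_n\}$ ($y_n \in \Y$, $y_n \to y$) is indeed connected and satisfies $L \subset h^{-1}\{y\}$, but nothing forces $L = h^{-1}\{y\}$, so one cannot conclude $h^{-1}\{y\}$ is connected this way. The ``$1$-oscillation property on balls $B \subset \D$ inherited from connectedness of interior fibers'' is also not a free lunch: the paper's remark deduces it from approximation of \emph{continuous monotone surjections $\overline{\D} \to \overline{\Y}$} by homeomorphisms, which you do not yet have at this point. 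What is missing in your plan is the analogue of the paper's Lemma~\ref{lem:preimage_component_boundary} and Corollary~\ref{cor:monotone_for_points_mapped_into_bdry}: one first fixes $y \in \partial\Y$, shows via Lemma~\ref{lem:preimage_lemma_generalized}\eqref{enum:component_boundary} that any component $K$ of $h^{-1}\{y\}$ with $\overline{K}\subset\D$ sits in a component $U$ of $h^{-1}B^2(y,r)$ with $\overline{U}\subset\D$, and then runs a degree argument (using the full-strength no-zero-Jacobian-component result on \emph{this} ball, which your Stage~(i) as stated does not cover) to force $\deg(h,\cdot,U) > 0$ and hence points of $B^2(y,r)\setminus\overline{\Y}$ in $h(U)$, a contradiction with $h(\D)\subset\overline{\Y}$. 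Only after that does the oscillation machinery of Section~\ref{Section_cont} apply, and continuity up to $\S^1$ follows.
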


Our goal in this section is to show that if $\Y$ and $h$ satisfy the assumptions of Theorem \ref{thm:hopf_monotonicity_upgraded}, then $\Y \subset h(\D) \subset \overline{\Y}$, and the pre-image $h^{-1}\{y\} \cup (h\vert\S^1)^{-1}\{y\}$ is connected for every $y \in \overline{\Y}$, where $h\vert\S^1 = f\vert\S^1$ is the continuous representative of the trace map of $h$. Note that including the $(h\vert\S^1)^{-1}\{y\}$-part is important when $y \in \partial\Y$; the set $h^{-1}\{y\}$ can e.g.\ consist of two disjoint paths converging to the same point of $\S^1$, in which case the interior part $h^{-1}\{y\}$ of the pre-image is not connected but $h^{-1}\{y\} \cup (h\vert\S^1)^{-1}\{y\}$ instead is. We also note that we do not yet show continuity of $h$ up to the boundary; hence, we refrain from referring to this property of $h$ as monotonicity.

\subsection{A topological preliminary result}

We begin with a topological lemma, which essentially transfers some properties from the components of $f^{-1}\{y\}$ to the components of $f^{-1}B^2(y, r)$ with small enough $r$.

\begin{lemma}\label{lem:preimage_lemma_generalized}
	Let $f \colon \D \to \C$ be a continuous map. Let $y \in \C$, and let $K \subset \D$ be a component of $f^{-1}\{y\}$ such that $\overline{K} \subset \D$. Then 
	\begin{enumerate}
		\item\label{enum:component_boundary} there exists $r_K > 0$ such that for every $r \in (0, r_K)$, the component $U_r$ of $f^{-1} B^2(y, r)$ containing $K$ satisfies $\overline{U_r} \subset \D$;
		\item\label{enum:component_pair} if $K' \neq K$ is a another component of $f^{-1}\{y\}$ such that $\overline{K'} \subset \D$, then there exists $r_{K, K'} > 0$ such that for every $r \in (0, r_{K, K'})$, the sets $K$ and $K'$ are contained in different components of $f^{-1} B^2(y, r)$.
	\end{enumerate}
\end{lemma}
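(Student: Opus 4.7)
The plan is to argue by a standard Kuratowski-type separation technique, exploiting the fact that in a compact Hausdorff space connected components coincide with quasi-components. Since $\overline{K} \subset \D$, I would first choose an open set $\Omega$ with $\overline{K} \subset \Omega$ and $\overline{\Omega}$ compactly contained in $\D$ (and, for part (2), also containing $\overline{K'}$). The main compact workspace is then $X_0 := f^{-1}\{y\} \cap \overline{\Omega}$, inside which $K$ is a connected component: any connected subset of $X_0$ containing $K$ is also a connected subset of $f^{-1}\{y\}$ containing $K$, so it must equal $K$ by maximality.

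For part (1), since $K \subset \Omega$ and $X_0 \cap \partial\Omega$ is a closed subset of $X_0$ disjoint from the component $K$, the Kuratowski characterization of components provides a clopen (in $X_0$) set $V$ with $K \subset V$ and $V \cap \partial\Omega = \emptyset$, hence $V \subset \Omega$. Since $V$ and $X_0 \setminus V$ are disjoint compacts, I can thicken $V$ to an open $W \subset \C$ with $V \subset W$, $\overline{W} \subset \Omega$, and $\overline{W} \cap X_0 = V$. The key consequence is that $f(\partial W)$ is a compact set avoiding $y$, because any $w \in \partial W$ with $f(w) = y$ would lie in $X_0 \cap \partial W \subset (\overline{W} \setminus W) \cap X_0 = V \setminus W = \emptyset$. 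Setting $r_K \leq d(f(\partial W), y) > 0$, for each $r < r_K$ the set $f^{-1} B^2(y, r)$ is disjoint from $\partial W$, so the component $U_r$ containing $K$ admits the decomposition $U_r = (U_r \cap W) \cup (U_r \setminus \overline{W})$ into disjoint open sets; connectedness and $K \subset U_r \cap W$ force $U_r \subset \overline{W} \subset \Omega$, yielding $\overline{U_r} \subset \overline{\Omega} \subset \D$.

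For part (2), I run the same construction simultaneously for both components. With $\Omega$ now containing $\overline{K} \cup \overline{K'}$, the Kuratowski fact applied inside $X_0$ produces disjoint clopen sets $V, V' \subset X_0$ with $K \subset V$, $K' \subset V'$, both disjoint from $\partial\Omega$: one first chooses clopen $V$ containing $K$ inside the complement of the closed set $K' \cup (X_0 \cap \partial\Omega)$, then chooses $V' \subset X_0 \setminus V$ around $K'$ analogously. Thicken $V, V'$ to disjoint open sets $W, W' \subset \Omega$ in $\C$ with $\overline{W} \cap \overline{W'} = \emptyset$, $\overline{W} \cap X_0 = V$, $\overline{W'} \cap X_0 = V'$. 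Applying the argument of part (1) to each, I obtain $r_{K,K'} > 0$ such that for $r < r_{K,K'}$ the component of $f^{-1} B^2(y, r)$ containing $K$ lies in $\overline{W}$ and the one containing $K'$ lies in $\overline{W'}$; since these sets are disjoint, the two components must be distinct.

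The step I expect to be the main obstacle is recognizing why the naïve approach — picking any neighborhood $\Omega$ of $K$ and hoping that $f(\partial\Omega)$ stays away from $y$ — is not enough. Without the clopen separation, the component $U_r$ of $f^{-1} B^2(y, r)$ containing $K$ can swallow nearby components $K''$ of $f^{-1}\{y\}$ that extend outside $\Omega$, so $U_r$ may leak past $\partial\Omega$ for every $r > 0$ even though $\overline{K} \subset \Omega$. The clopen neighborhood $V$ inside $X_0$ is precisely the tool that isolates $K$ from every other point of $f^{-1}\{y\}$ lying in $\overline{\Omega}$, thereby producing a buffer $\overline{W} \setminus W$ on which $f$ is uniformly bounded away from $y$.
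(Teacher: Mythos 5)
Your proof is correct, but it takes a genuinely different route from the paper's. The paper argues by contradiction: assuming there is a sequence $r_i \to 0$ with $\overline{U_{r_i}} \cap \S^1 \neq \emptyset$, it forms the nested intersection $C = \bigcap_i \overline{U_{r_i}}$ and invokes the fact that a decreasing sequence of compact connected sets has connected intersection to obtain a compact connected $C$ containing $K$, meeting $\S^1$, and with $f(C \cap \D) \subset \{y\}$; a separation of $C$ is then manufactured from the maximality of $K$. Your argument is instead a direct construction: you pass to the compact auxiliary space $X_0 = f^{-1}\{y\} \cap \overline{\Omega}$ (checking $K$ remains a component there), invoke the Kuratowski / \v{S}ura-Bura fact that components and quasi-components agree in compact Hausdorff spaces to extract a clopen $V \subset X_0$ isolating $K$, thicken $V$ to an open $W$ whose topological boundary $\partial W$ is a genuine ``buffer'' on which $f$ omits $y$, and then let the compactness of $f(\partial W)$ furnish the radius $r_K$. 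The trade-off is that the paper's approach is shorter and leans on a single nested-intersection lemma, whereas yours is longer but more constructive — it produces an explicit $r_K$ (any $r_K \leq d(f(\partial W), y)$) and a concrete trapping set $W$ for $U_r$, which also makes the passage from part (1) to part (2) completely mechanical: run the construction twice with disjoint $W, W'$. Both are standard compactness arguments and both are complete; your closing diagnosis of why the naive neighborhood $\Omega$ alone is insufficient (leakage of $U_r$ through other fibers of $f^{-1}\{y\}$ meeting $\overline{\Omega}$) correctly identifies the same obstruction the paper circumvents with its nested-intersection step.
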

\begin{proof}
	We begin by proving \eqref{enum:component_boundary}. Note that $K$ is indeed always contained in a single component of $f^{-1} B^2(y, r)$, since the components would otherwise yield a separation of $K$ due to being open. Suppose then to the contrary that we have a sequence $r_1 > r_2 > \ldots$ such that $r_i \to 0$ and $\overline{U_{r_i}} \cap \S^1 \neq \emptyset$. 
	
	Note that $\overline{U_{r_i}}$ are connected, since closures of connected subsets are always connected. We let $C = \bigcap_i \overline{U_{r_i}}$. Then $C$ contains $K$ since every $\overline{U_{r_i}}$ contains $K$. The set $C$ also meets $\S^1$; indeed, the sets $\S^1 \cap \overline{U_{r_i}}$ form a decreasing sequence of non-empty compact sets, so their intersection is non-empty. Moreover, $C$ is connected, since it is the intersection of a decreasing sequence of compact connected sets; see e.g.\ \cite[Corollary 6.1.19]{Engelking_topology}.
	
	Since $f$ is continuous, we have $f(\overline{U_{r_i}} \cap \D) \subset \overline{B^2(y, r_i)}$ for every $i$, and therefore $f(C \cap \D) \subset \{y\}$. Since $K$ is a connected component of $f^{-1}\{y\}$ it follows that the connected component of $C \cap \D$ containing $K$ must in fact equal $K$. Hence, we find an open neighborhood $U$ of $K$ such that $\overline{U}$ is disjoint from $(C \cap \D) \setminus K$. Since $\overline{K} \subset \D$, we also find an open neighborhood $V$ of $K$ with $\overline{V} \subset \D$. The sets $U \cap V$ and $\C \setminus (\overline{U} \cup \overline{V})$ then yield a separation of $C$, which contradicts the connectedness of $C$. Hence, the proof of \eqref{enum:component_boundary} is complete.
	
	The proof of \eqref{enum:component_pair} is similar. Suppose that we have a sequence $r_1 > r_2 > \ldots$ such that $r_i \to 0$ and a component $U_i$ of $f^{-1} B^2(y, r_i)$ meets both $K$ and $K'$, in which case we in fact have $K \cup K' \subset U_i$. Using the previous part \eqref{enum:component_boundary}, we may assume that $r_i$ are small enough that $\overline{U_i} \subset \D$.
	
	We then define $C = \bigcap_i \overline{U_{i}}$. Since $\overline{U_i}$ are a descending sequence of compact connected sets, we again have that $C$ is a compact connected set. Since $K \cup K' \subset U_{i}$ for every $i$, we have $K \cup K' \subset C$. However, the continuity of $f$ and the fact that $\overline{U_i} \subset \D$ implies that $f(\overline{U_{i}}) \subset \overline{B^2}(y, r_i)$, from which it follows that $f(C) \subset \{y\}$. Therefore, $C$ is contained in a single component of $f^{-1}\{y\}$, which is only possible if $K = K'$, completing the proof of \eqref{enum:component_pair}.
\end{proof}

\subsection{Application of degree theory}

In this subsection, we prove the connectedness of the fibers $h^{-1}\{y\}$ with an extra assumption. Namely, we do not assume that $h$ is a solution of the Hopf-Laplace equation, but instead we assume that there exists no open ball $B \subset \C$ and non-empty component $U$ of $h^{-1}B$ such that $J_h \equiv 0$ a.e.\ in $U$. This extra assumption allows for relatively natural proofs using degree theory. Later, in the next subsection, we then proceed to eliminate this extra assumption using the Hopf-Laplace equation.

\begin{lemma}\label{lem:degree_lemma_interior}
	Let $h \in C(\D, \C) \cap W^{1,2}(\D, \C)$ be such that $J_h \geq 0$ a.e.\ in $\D$ and $h$ satisfies the Lusin (N) condition. Suppose that $h - f \in W^{1,2}_0(\D)$, where $f \colon \overline{\D} \to \overline{\Y}$ is an orientation preserving homeomorphism onto a Lipschitz Jordan domain $\Y$. Then $\Y \subset h(\D)$.
	
	Moreover, suppose also that there exists no non-empty component $U$ of the pre-image $h^{-1} B$ of an open ball $B$ such that $J_f \equiv 0$ a.e.\ in $U$. Then $h^{-1}\{y\} = \emptyset$ for every $y \in \C \setminus \overline{Y}$ and $h^{-1}\{y\}$ is connected for every $y \in \Y$.
\end{lemma}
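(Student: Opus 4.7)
My plan is to run everything through the degree theory of Lemmas \ref{lem:degree_in_preimage} and \ref{lem:degree_equality lemma}, using the sign constraint $J_h \geq 0$ in an essentially combinatorial way. The key observation I would rely on is that the Jacobian formula
\[
	\deg(h, y, U) = \frac{1}{m_2(V)}\int_U J_h(z)\dd z
\]
makes every component degree a \emph{nonnegative integer}, and Lemma \ref{lem:degree_equality lemma} pins down the total degree in $h^{-1}B^2(y, r)$ to $1$ when $y \in \Y$ and $0$ when $y \notin \overline{\Y}$, provided $r$ is small enough that $\overline{B^2(y,r)}$ is separated from $\partial \Y = f(\S^1)$.

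For $\Y \subset h(\D)$, I would fix $y \in \Y$, pick $r>0$ with $\overline{B^2(y,r)} \subset \Y$, and note that the total degree in $h^{-1}B^2(y,r)$ is $1$; hence some component $U$ has nonzero degree, which forces $y \in h(U) \subset h(\D)$. For the second assertion, $h^{-1}\{y\} = \emptyset$ when $y \notin \overline{\Y}$, I would argue by contradiction: if $z_0 \in h^{-1}\{y\}$, pick $r>0$ with $\overline{B^2(y,r)} \cap \overline{\Y} = \emptyset$. Then $f^{-1}B^2(y,r) = \emptyset$ and Lemma \ref{lem:degree_equality lemma} gives total degree $0$; each summand being a nonnegative integer, all component degrees vanish. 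In particular, the nonempty component $U \ni z_0$ satisfies $\int_U J_h = 0$, whence $J_h \equiv 0$ a.e.\ on $U$, contradicting the extra hypothesis.

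For the connectedness of $h^{-1}\{y\}$ with $y \in \Y$, I would suppose for contradiction that $h^{-1}\{y\}$ has two distinct components $K \neq K'$. Corollary \ref{cor:capacity_to_topology_preimage} applied with $A = \{y\} \subset \Y$ (so $\overline{A} \cap f(\S^1) = \emptyset$) yields $\overline{K}, \overline{K'} \subset \D$, which lets me invoke Lemma \ref{lem:preimage_lemma_generalized} to produce an $r>0$, small enough that also $\overline{B^2(y,r)} \subset \Y$, such that the components $U_r \supset K$ and $U'_r \supset K'$ of $h^{-1}B^2(y,r)$ are distinct. The total degree is $1$ while every component contributes a nonnegative integer, so at most one component can carry nonzero degree. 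Hence at least one of $U_r, U'_r$ has degree $0$, giving $J_h \equiv 0$ a.e.\ on a nonempty component, again contradicting the hypothesis.

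The main obstacle is not analytic but bookkeeping: passing from two components of $h^{-1}\{y\}$ to two components of $h^{-1}B^2(y,r)$ for sufficiently small $r$ requires both that the relevant fibers and their small neighborhoods stay well inside $\D$ (this is exactly the role of the Sobolev boundary condition through Corollary \ref{cor:capacity_to_topology_preimage}) and that the distinct components of the fiber persist as distinct components of nearby preimages (which is exactly Lemma \ref{lem:preimage_lemma_generalized}). Once these two ingredients are in place, the rigidity of nonnegative integers summing to $0$ or $1$ does all the remaining work.
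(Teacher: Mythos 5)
Your proposal is essentially the same argument as the paper's proof. The paper likewise applies Lemma \ref{lem:degree_in_preimage} to get per-component degrees that are nonnegative integers by the Jacobian formula, applies Lemma \ref{lem:degree_equality lemma} to pin the total degree to $0$ or $1$ depending on whether $y \notin \overline{\Y}$ or $y \in \Y$, and then uses the rigidity of nonnegative integers summing to $0$ or $1$ to force either a zero-degree nonempty component (contradiction with the extra hypothesis) or uniqueness of the component; the connectedness of $h^{-1}\{y\}$ is then obtained from Corollary \ref{cor:capacity_to_topology_preimage} together with Lemma \ref{lem:preimage_lemma_generalized}, exactly as you describe, with your contrapositive phrasing being logically equivalent to the paper's direct phrasing.
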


\begin{proof}
	Note that we may extend $f$ to a continuous map in $W^{1,2}(\C, \C)$, by first taking the homeomorphic extension $f \in W^{1,2}_\loc(\C, \C)$ of Remark \ref{rem:Sobolev_extension}, and then multiplying with a suitable smooth cutoff function.
	
	Let $y \in \C \setminus \partial \Y$, and let $r \in (0, d(y, \partial \Y))$. We consider first the case $y \notin \overline{\Y}$. Suppose towards contradiction that $h^{-1}\{y\} \neq \emptyset$. Then $h^{-1} B^2(y, r)$ is nonempty. Let $U_1, U_2, \dots$ be the connected components of $h^{-1} B^2(y, r)$, in which case at least one nonempty component exists.
	
	By Lemma \ref{lem:degree_in_preimage}, every component $U_i$ is compactly contained in $\D$, and $\deg(h, U_i, y)$ is well defined. Since $J_h \geq 0$ a.e.\ in $\D$, we have $\deg(h, U_i, y) \geq 0$ for every $i$ by the Jacobian formula of Lemma \ref{lem:degree_in_preimage}. Moreover, since we may extend $f$ to $C(\C, \C) \cap W^{1,2}_\loc(\C, \C)$ by Remark \ref{rem:Sobolev_extension}, and since the extension satisfies the Lusin (N) condition due to e.g.\ \cite[Theorem 4.9]{HKb}, we may apply Lemma \ref{lem:degree_equality lemma} and obtain
	\[
		\sum_i \deg(h, y, U_i) = \deg(f, y, \D \cap f^{-1}B^2(y, r)) = 0,
	\]
	where the last equality is since $f$ is a homeomorphism from $\D$ to $\Y$ and $y \notin \Y$. We conclude that $\deg(h, U_i, y) = 0$ for every $i$. Since $J_h \geq 0$, the Jacobian formula of Lemma \ref{lem:degree_in_preimage} implies that $J_h \equiv 0$ on every $U_i$. By our assumption, this is impossible for a non-empty $U_i$; hence, we have reached a contradiction, proving that $h^{-1}\{y\} = \emptyset$.
	
	It remains to consider the case where $y \in \Y$, which proceeds similarly. We again let $U_1, U_2, \dots$ be the connected components of $h^{-1} B^2(y, r)$, and similarly as before we get $\deg(h, U_i, y) \geq 0$ and 
	\[
		\sum_i \deg(h, y, U_i) = \deg(f, y, \D \cap f^{-1}B^2(y, r)) = \pm 1,
	\]
	since $B^2(y, r) \subset \Y$ and $f$ is a homeomorphism from $\D$ to $\Y$. Since the degrees on the left hand side are non-negative, the value of $\deg(f, y, \D \cap f^{-1}B^2(y, r))$ must then be 1. It follows that there's exactly one component, which we may assume to be $U_1$, with $\deg(h, y, U_1) = 1$,. For any other components $U_i$, we have $\deg(h, y, U_i) = 0$.
	
	Hence, by e.g.\ \cite[Theorem 2.1]{Fonseca-Gangbo-book}, we must have $y \in f(U_1) \subset f(\D)$. This completes the proof that $\Y \subset f(\D)$. Moreover, by our assumption that $J_f \equiv 0$ on no non-empty component of $f^{-1}B^2(y, r)$, we cannot have any components $U_i$ with $\deg(h, y, U_i) = 0$. Hence, $U_1$ is the only component of $f^{-1}B^2(y, r)$. Since this holds for arbitrarily small $r$, and since every component of $h^{-1}\{y\}$ is compact by Corollary \ref{cor:capacity_to_topology_preimage}, it follows from Lemma \ref{lem:preimage_lemma_generalized} part \eqref{enum:component_pair} that $h^{-1}\{y\}$ must be connected.
\end{proof}

Lemma \ref{lem:degree_lemma_interior} hence yields connectedness of $h^{-1}\{y\} \cup (h\vert\S^1)^{-1}\{y\}$ for $y \in \Y$, as for such $y$ the boundary part of the pre-image is empty. Next, we consider pre-images of points $y \in \partial\Y$. We reduce the connectedness of $h^{-1}\{y\} \cup (h\vert\S^1)^{-1}\{y\}$ in this case to the following lemma.

\begin{lemma}\label{lem:preimage_component_boundary}
	Let $h \in C(\D, \C) \cap W^{1,2}(\D, \C)$ be such that $J_h \geq 0$ a.e.\ in $\D$ and $h$ satisfies the Lusin (N) condition. Suppose that $h - f \in W^{1,2}_0(\D)$, where $f \colon \overline{\D} \to \overline{\Y}$ is an orientation preserving homeomorphism onto a Lipschitz Jordan domain $\Y$. Suppose also that there exists no non-empty component $U$ of the pre-image $h^{-1} B$ of an open ball $B$ such that $J_f \equiv 0$ a.e.\ in $U$. If $y \in \partial\Y$, $z_0$ is the unique point for which $(h \vert \S^1)(z_0) = y$, and $K$ is a connected component of $h^{-1}\{y\}$, then $z_0 \in \overline{K}$.
\end{lemma}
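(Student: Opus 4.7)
The plan is a proof by contradiction: assuming $\overline{K} \subset \D$, I will derive via degree theory that $J_h \equiv 0$ a.e.\ on a non-empty component of a pre-image of a ball, violating the extra hypothesis; once $\overline{K} \cap \S^1 \neq \emptyset$ is secured, the capacity--trace lemma will identify the boundary contact as $z_0$. As a preliminary, Lemma~\ref{lem:degree_lemma_interior} applies under the present assumptions and yields $h^{-1}\{y'\} = \emptyset$ for every $y' \in \C \setminus \overline{\Y}$, i.e., $h(\D) \subset \overline{\Y}$. Now assume $\overline{K} \subset \D$; then Lemma~\ref{lem:preimage_lemma_generalized}\eqref{enum:component_boundary} produces $r_0 > 0$ such that for every $r \in (0, r_0)$ the component $U_r$ of $h^{-1} B^2(y, r)$ containing $K$ satisfies $\overline{U_r} \subset \D$.

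Fix such an $r$. Since $\overline{U_r} \subset \D$ and $h$ is continuous on $\overline{U_r}$, the classical Brouwer degree for $h\vert\overline{U_r}$ is available. As in the proof of Lemma~\ref{lem:degree_in_preimage}, the fact that $U_r$ is a component of $h^{-1} B^2(y, r)$ forces $h(\partial U_r) \subset \partial B^2(y, r)$, so $\deg(h, y', U_r)$ is defined and constant throughout $B^2(y, r)$. Because $y \in \partial \Y$ and $\Y$ is a Jordan domain, $B^2(y, r) \setminus \overline{\Y}$ is non-empty; any $y'$ in this set satisfies $y' \notin h(U_r) \subset \overline{\Y}$ by the preceding paragraph, so $\deg(h, y', U_r) = 0$. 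The Jacobian formula of Lemma~\ref{lem:degree_in_preimage}, whose proof depends only on $\overline{U_r} \subset \D$, $h(\partial U_r) \subset \partial B^2(y, r)$, and the Lusin (N) property, then gives
\[
	\int_{U_r} J_h(z) \dd z = m_2(B^2(y, r)) \deg(h, y', U_r) = 0,
\]
and the hypothesis $J_h \geq 0$ upgrades this to $J_h \equiv 0$ a.e.\ on the non-empty open set $U_r$, contradicting the extra hypothesis of the lemma. Hence $\overline{K} \cap \S^1 \neq \emptyset$.

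Finally, Lemma~\ref{lem:capacity_to_topology_lemma} applied to the connected set $C = K \subset \D$ produces a point $z \in \overline{K} \cap \S^1$ with $f(z) \in \overline{h(K)} = \{y\}$, and the uniqueness of $z_0$ as the pre-image of $y$ under $f\vert\S^1$ forces $z = z_0$, yielding $z_0 \in \overline{K}$. The delicate point is the degree step, since Lemma~\ref{lem:degree_in_preimage} is not directly applicable ($y \in f(\S^1)$ makes $d(f(\S^1), \overline{B^2(y, r)}) = 0$); the resolution is that this positive-distance hypothesis was used only to guarantee $\overline{U_r} \subset \D$, a conclusion we instead extract from Lemma~\ref{lem:preimage_lemma_generalized}, after which the remainder of the argument of Lemma~\ref{lem:degree_in_preimage} transfers verbatim.
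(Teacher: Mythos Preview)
Your proof is correct and follows essentially the same approach as the paper: split into the cases $\overline{K}\cap\S^1\neq\emptyset$ (handled by Lemma~\ref{lem:capacity_to_topology_lemma}) and $\overline{K}\subset\D$ (ruled out via Lemma~\ref{lem:preimage_lemma_generalized}\eqref{enum:component_boundary} and a degree argument on $U_r$). The only cosmetic difference is the direction of the contradiction in the degree step---the paper argues that the extra hypothesis forces $\deg(h,y',U_r)>0$ and hence $B^2(y,r)\subset h(U_r)$, conflicting with $h(\D)\subset\overline{\Y}$, whereas you argue that $h(\D)\subset\overline{\Y}$ forces $\deg(h,y',U_r)=0$ and hence $J_h\equiv 0$ on $U_r$; these are contrapositives of one another.
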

\begin{proof}
	The proof divides into two cases. We consider first the case where $\overline{K} \cap \S^1 \neq \emptyset$. Now, Lemma \ref{lem:capacity_to_topology_lemma} applies, and we obtain that $f(\overline{K} \cap \S^1) \cap \overline{h(K)} \neq \emptyset$. Since $K$ is a component of $h^{-1}\{y\}$, we have $\overline{h(K)} = \{y\}$. Hence, $y \in f(\overline{K} \cap \S^1)$, and therefore $z_0 \in \overline{K} \cap \S^1 \subset \overline{K}$.
	
	It remains to consider the case where $\overline{K} \cap \S^1 = \emptyset$. We suppose towards contradiction that $z_0 \notin \overline{K}$. By Lemma \ref{lem:preimage_lemma_generalized} part \eqref{enum:component_boundary}, we find an $r > 0$ such that the connected component $U$ of $h^{-1}B^2(y,r)$ containing $K$ is compactly contained in $\D$.
	
	Now, $h$ is continuous on $\overline{U}$, and since $U$ is a connected component of $h^{-1}B^2(y,r)$, we have $h(\partial U) \cap B^2(y,r) = \emptyset$. It follows that the classical Brouwer degree $\deg(h, y', U)$ is well defined for every $y' \in B^2(y,r)$, including $y$. Moreover, since $B^2(y,r)$ is connected and $h(\partial U)$ is mapped outside it, $\deg(h, y', U)$ is independent of $y' \in B^2(y,r)$ by e.g.\ \cite[Theorem 2.3 (3)]{Fonseca-Gangbo-book}. Moreover, $h(\partial U) \subset \partial B^2(y, r)$ by continuity of $f$, so we again have by \cite[Proposition 5.25 and Remark 5.26 (ii)]{Fonseca-Gangbo-book} the Jacobian formula
	\[
	\int_U J_f(z) \dd z = \int_\C \deg(h, z', U) \dd z' = m_2(B^2(y,r)) \deg(h, y', U)
	\]
	for every $y' \in B^2(y,r)$.
	
	By our assumption that we cannot have $J_f \equiv 0$ in a component of the pre-image of a ball, we must have that $J_h > 0$ in a positive measured subset of $U$. The Jacobian formula hence implies that the constant value of $\deg(h, y', U)$ for $y' \in B^2(y,r)$ must be positive. Notably, every $y' \in B^2(y,r)$ must be in the image $h(U)$ by e.g.\ \cite[Theorem 2.1]{Fonseca-Gangbo-book}. This is a contradiction; $y$ is on the boundary of a Lipschitz Jordan domain $\Y$, so $B^2(y,r)$ must necessarily meet $\C \setminus \overline{\Y}$, yet $h(\D) \subset \overline{\Y}$ by Lemma \ref{lem:degree_lemma_interior}. Hence, our counterassumption has resulted in a contradiction, and the claim therefore holds.
\end{proof}

As previously indicated, the statement of Lemma \ref{lem:preimage_component_boundary} implies that the set  $h^{-1}\{y\}\cup(h\vert\S^1)^{-1}\{y\}$ is connected for every $y \in \partial \Y$. 

\begin{cor}\label{cor:monotone_for_points_mapped_into_bdry}
	Let $h \in C(\D, \C) \cap W^{1,2}(\D, \C)$ be such that $J_h \geq 0$ a.e.\ in $\D$ and $h$ satisfies the Lusin (N) condition. Suppose that $h - f \in W^{1,2}_0(\D)$, where $f \colon \overline{\D} \to \overline{\Y}$ is an orientation preserving homeomorphism onto a Lipschitz Jordan domain $\Y$. Suppose also that there exists no non-empty component $U$ of the pre-image $h^{-1} B$ of an open ball $B$ such that $J_f \equiv 0$ a.e.\ in $U$. Then for every boundary point $y \in \partial \Y$, the set $h^{-1}\{y\} \cup (h\vert\S^1)^{-1}\{y\}$ is connected.
\end{cor}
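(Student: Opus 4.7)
The plan is to deduce the corollary directly from Lemma~\ref{lem:preimage_component_boundary} together with two standard point-set topology facts: that adjoining a limit point to a connected set preserves connectedness, and that a union of connected sets sharing a common point is connected.

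First, I would observe that since $f\colon\overline{\D}\onto\overline{\Y}$ is a homeomorphism, its restriction $f\vert\S^1$ is a homeomorphism onto $\partial\Y$. Thus for any $y \in \partial\Y$ there is a unique point $z_0 \in \S^1$ with $(h\vert\S^1)(z_0) = f(z_0) = y$, so that $(h\vert\S^1)^{-1}\{y\} = \{z_0\}$. In particular, the set whose connectedness we want to establish is simply $h^{-1}\{y\} \cup \{z_0\}$.

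If $h^{-1}\{y\} = \emptyset$, then $h^{-1}\{y\} \cup \{z_0\} = \{z_0\}$ is a singleton and connectedness is trivial. Otherwise, let $\{K_\alpha\}_\alpha$ denote the family of connected components of $h^{-1}\{y\}$. Lemma~\ref{lem:preimage_component_boundary} applies to each $K_\alpha$ (its hypotheses coincide precisely with the hypotheses of the corollary), and yields $z_0 \in \overline{K_\alpha}$ for every $\alpha$. Since the set obtained by adjoining a point of the closure of a connected set is connected, each $K_\alpha \cup \{z_0\}$ is connected.

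Finally, I would write
\[
    h^{-1}\{y\} \cup \{z_0\} = \bigcup_\alpha \bigl(K_\alpha \cup \{z_0\}\bigr),
\]
which exhibits the set as a union of connected subsets all containing the common point $z_0$; such a union is connected, completing the proof. There is no real obstacle here since Lemma~\ref{lem:preimage_component_boundary} has already done the essential analytic and topological work; this corollary only packages its conclusion into the statement about connectedness of $h^{-1}\{y\}\cup(h\vert\S^1)^{-1}\{y\}$.
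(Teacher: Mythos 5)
Your proof is correct and follows essentially the same route as the paper's: both rest entirely on Lemma~\ref{lem:preimage_component_boundary}, which places $z_0$ in the closure of every component of $h^{-1}\{y\}$, and then finish with elementary point-set topology. The only difference is cosmetic — you argue directly by writing $h^{-1}\{y\}\cup\{z_0\}$ as a union of connected sets $K_\alpha \cup \{z_0\}$ sharing the common point $z_0$, whereas the paper assumes a separation $(U,V)$ and derives $V = \emptyset$.
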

\begin{proof}
	Note that $(h\vert\S^1)^{-1}\{y\}$ is a singleton; we denote the single point in it by $z_0$. Suppose then that $(U, V)$ is a separation of $h^{-1}\{y\} \cup (h\vert\S^1)^{-1}\{y\}$. Either $U$ or $V$ must contain $(h\vert\S^1)^{-1}\{y\} = \{z_0\}$; we may assume by symmetry that $z_0 \in U$. 
	
	Let then $K$ be a connected component of $h^{-1}\{y\}$. Since $U$ is the intersection of $h^{-1}\{y\} \cup (h\vert\S^1)^{-1}\{y\}$ with an open subset of $\C$, and since Lemma \ref{lem:preimage_component_boundary} yields that $z_0 \in \overline{K}$, we must have that $U \cap K \neq \emptyset$. Since $K$ is connected, we must therefore have $V \cap K = \emptyset$. As this holds for all components $K$, we conclude that $V = \emptyset$, and therefore $h^{-1}\{y\} \cup (h\vert\S^1)^{-1}\{y\}$ is connected.
\end{proof}

\subsection{Applying the trajectory structure}

With Lemma \ref{lem:degree_lemma_interior} and Corollary \ref{cor:monotone_for_points_mapped_into_bdry} shown, the remaining key step is to eliminate the extra assumption that was required by their proofs. Namely, we wish to show that when $h$ is a solution of the Hopf-Laplace equation with non-negative Jacobian and a homeomorphic trace, then no pre-image $h^{-1}B$ of a ball has a non-empty component $U$ such that $J_h \equiv 0$ a.e.\ in $U$. 

\begin{lemma}\label{lem:zero_Jacobian_trajectory_lemma}
	Let $h \in W^{1,2}(\D, \C)$ be a solution to the Hopf-Laplace equation with $J_h \geq 0$ a.e.\ in $\D$.  Suppose that $h - f \in W^{1,2}_0(\D)$, where $f \colon \overline{\D} \to \overline{\Y}$ is an orientation preserving homeomorphism onto a Lipschitz Jordan domain $\Y$. Then for any $y \in \C$ and $r \in (0, \infty)$, there exists no non-empty connected component $U$ of $h^{-1} B^2(y, r)$ such that $J_h \equiv 0$ a.e.\ in $U$.
\end{lemma}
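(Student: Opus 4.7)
The plan is to assume such a component $U$ exists and reach a contradiction by tracking a single vertical trajectory of the Hopf differential through $U$ and applying the capacity-topology lemma separately to its two halves.

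First I dispose of the degenerate case $\varphi := h_z\overline{h_{\bar z}} \equiv 0$ on $\D$. Pointwise this forces $h_z = 0$ or $h_{\bar z} = 0$, which combined with $J_h = \abs{h_z}^2 - \abs{h_{\bar z}}^2 \geq 0$ yields $h_{\bar z} \equiv 0$ almost everywhere, so $h$ is holomorphic. Then $J_h \equiv 0$ on the open set $U$ gives $h_z \equiv 0$ on $U$, and analytic continuation forces $h$ to be constant on all of $\D$. This contradicts the Sobolev trace $f\vert\S^1$ being a nonconstant homeomorphism onto $\partial\Y$.

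Assuming $\varphi \not\equiv 0$, its zero set is discrete. I pick a regular point $z_0 \in U$ whose maximal vertical trajectory $\gamma:(a,b)\to \D$ is non-critical; such a point exists because $U$ has positive measure while the union of critical trajectories has measure zero. The next step is to show $\gamma \subset U$ and $h \equiv c := h(z_0)$ on $\gamma$. Locally at $z_0$ this follows from Lemma~\ref{lem:Hopf_harm_vert_traj_constant}, and a standard continuation argument extends it: if $\gamma$ exited $U$ at an interior point $\gamma(T)\in \partial U \cap \D$, then continuity plus constancy along the trajectory would give $h(\gamma(T)) = c \in B^2(y,r)$, while the component property of $U$ as a component of $h^{-1}B^2(y,r)$ forces $h(\gamma(T)) \in \partial B^2(y,r)$, contradicting $c$ being an interior point of the ball.

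Now the homeomorphic boundary condition delivers the contradiction. Since $\abs\varphi \leq \abs{Dh}^2/4 \in L^1(\D)$, Lemma~\ref{lem:noncrit_traj_boundary_lemma} gives distinct endpoints $y_1, y_2 \in \S^1$ with $\gamma(t)\to y_1$ and $\gamma(t)\to y_2$ at the two ends. The critical move is to split $\gamma$ at an interior parameter $t_1$ into two halves $\gamma^- := \gamma((a,t_1])$ and $\gamma^+ := \gamma([t_1,b))$; by Strebel's convergence result, each is a connected subset of $\D$ whose closure meets $\S^1$ at only one of the endpoints. Applying Lemma~\ref{lem:capacity_to_topology_lemma} to $\gamma^-$ (valid because $h \in C(\D,\C) \cap W^{1,2}(\D,\C)$ by Theorem~\ref{thm:hopf_harm_lipschitz}, $h - f \in W^{1,2}_0(\D,\C)$, and $f \in C(\overline\D,\C)$) forces $\{f(y_1)\} \cap \{c\} \neq \emptyset$, hence $c = f(y_1)$; the same applied to $\gamma^+$ gives $c = f(y_2)$. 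Injectivity of $f\vert\S^1$ combined with $y_1 \neq y_2$ is the desired contradiction.

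The main subtlety I expect is the choice to apply Lemma~\ref{lem:capacity_to_topology_lemma} to each half-trajectory separately; applying it to the whole $\gamma$ only yields the weaker and consistent conclusion $c \in \{f(y_1), f(y_2)\}$. The other delicate point is the trapping argument showing $\gamma \subset U$, which relies on the openness of $B^2(y,r)$ and the componentwise structure of $h^{-1}B^2(y,r)$ to prevent the trajectory from escaping through $\partial U \cap \D$.
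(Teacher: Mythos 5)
Your proposal is correct and follows essentially the same route as the paper's own proof: dispatch the degenerate case $\varphi \equiv 0$ by forcing $h$ to be holomorphic with nonconstant trace; otherwise trap a non-critical vertical trajectory inside $U$, use Lemma~\ref{lem:Hopf_harm_vert_traj_constant} together with the component/openness argument to show $h$ is constant along the entire trajectory, then invoke Lemma~\ref{lem:noncrit_traj_boundary_lemma} and apply Lemma~\ref{lem:capacity_to_topology_lemma} separately to the two half-trajectories to contradict injectivity of $f\vert\S^1$. The one cosmetic difference is in the degenerate case, where you use the identity theorem on $h_z$ over $U$ to force $h$ constant, while the paper observes directly that a nonconstant holomorphic map has $J_h > 0$ off a discrete set; both are fine.
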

\begin{proof}
	By Theorem \ref{thm:hopf_harm_lipschitz}, $h$ is continuous inside $\D$ and satisfies the Lusin (N) -condition. We let $\varphi = (\partial_z h)(\overline{\partial_{\overline{z}} h})$ be the Hopf product of $h$, and let $H = \varphi dz \otimes dz$ be the associated holomorphic quadratic differential. We suppose to the contrary that $U$ is a connected component of $h^{-1} B^2(y, r)$ such that $J_f \equiv 0$ a.e.\ in $U$. Note again that $U$ is an open subset of $\D$.
	
	We consider first the simple case $H \equiv 0$. We then have $(\partial_z h) ( \overline{\partial_{\overline{z}} h}) = 0$ almost everywhere and therefore $\partial_z h(z) = 0$ or $\partial_{\overline{z}} h(z) = 0$ at a.e.\ $z \in D$. We also note that $\abs{\partial_z h}^2 - \abs{\partial_{\overline{z}} h}^2 = J_h \geq 0$ almost everywhere, which limits us to the possibility $\partial_{\overline{z}} h(z) = 0$ at a.e.\ $z \in D$. It follows that $h$ is weakly holomorphic, and therefore holomorphic by Weyl's lemma. We also know that $h$ is not constant, since it coincides with a homeomorphism on $\partial D$. It follows then that $J_h > 0$ outside of countably many isolated points, which contradicts our counterassumption that $J_h \equiv 0$ a.e.\ in $U$.
	
	Consider then the remaining case where $H$ is not identically zero.  Then, since $H$ only has isolated zeroes, the union of all critical vertical trajectories of $F$ has zero measure, as previously discussed in Section \ref{subsect:hopf_harm_prelims}. Hence, we can find a non-critical vertical trajectory $\gamma \colon (a, b) \to \D$ of $H$ which intersects $U$. Since $J_f \equiv 0$ a.e.\ in $U$, we have by Lemma \ref{lem:Hopf_harm_vert_traj_constant} that $f$ is constant on every segment of $\gamma$ contained in $U$.
	
	We then show that in fact the entire trajectory $\gamma$ is contained in $U$. Indeed, let $t \in (a,b)$ be such that $\gamma(t) \in U$, and suppose to the contrary that $\gamma$ also meets $\D \setminus U$. We may assume that $\gamma$ meets $\D \setminus U$ on $(t,b)$, as we can reverse the direction of $\gamma$ if necessary. Then there exists a smallest $t' > t$ such that $\gamma(t') \notin U$. By continuity of $\gamma$, we must hence have $\gamma(t') \in \partial U \cap \D$. It follows that $h$ is constant on $\gamma[t, t')$. Since $\gamma(t') \subset \D$ and $h$ is continuous in $\D$, we must then in fact have that $h(\gamma(t)) = h(\gamma(t'))$. It follows that $\gamma(t') \in h^{-1} B^2(y, r)$. But this is a contradiction, since $U$ is an open component of $h^{-1} B^2(y, r)$, yet $\gamma(t') \in \partial U$. Hence, $\gamma$ cannot meet $\D \setminus U$.
	
	It then follows that $h$ is constant on the entire vertical trajectory $\gamma$. Let $y_\gamma$ denote this constant value of $h$ on $\gamma$. Since $h \in W^{1,2}(\D, \C)$, we have $\varphi \in L^1(\D, \C)$ by H\"older's inequality. Hence, by Lemma \ref{lem:noncrit_traj_boundary_lemma}, the trajectory $\gamma$ tends to two distinct boundary points $z_a, z_b \in \S^1$ at $a$ and $b$ respectively. 
	
	We then split the image of $\gamma$ into two connected halves $C_a, C_b$ where $\overline{C_a} \cap \S^1 = \{z_a\}$ and $\overline{C_b} \cap \S^1 = \{z_b\}$. Now, Lemma \ref{lem:capacity_to_topology_lemma} yields that $\{f(z_a)\} \cap \{y_\gamma\} = f(\overline{C_a} \cap \S^1) \cap \overline{h(C_a)} \neq \emptyset$, and similarly $\{f(z_b)\} \cap \{y_\gamma\} = f(\overline{C_b} \cap \S^1) \cap \overline{h(C_b)} \neq \emptyset$. It follows that $f(z_a) = f(z_b) = y_\gamma$. But this is a contradiction, since $z_a \neq z_b$ and $f$ is a homeomorphism. Therefore, no set $U$ as above can exist.
\end{proof}

By combining Lemmas \ref{lem:degree_lemma_interior} and \ref{lem:zero_Jacobian_trajectory_lemma} and Corollary \ref{cor:monotone_for_points_mapped_into_bdry}, we immediately obtain the desired result of this section.

\begin{cor}\label{cor:monotone_for_points_mapped_into_Y}
	Let $h \in W^{1,2}(\D, \C)$ be a solution to the Hopf-Laplace equation with $J_h \geq 0$ a.e.\ in $\D$.  Suppose that $h - f \in W^{1,2}_0(\D)$, where $f \colon \overline{\D} \to \overline{\Y}$ is an orientation preserving homeomorphism onto a Lipschitz Jordan domain $\Y$. Then $\Y \subset h(\D) \subset \overline{\Y}$, $h^{-1}\{y\}$ is connected for every $y \in \Y$, and $h^{-1}\{y\} \cup (h\vert\S^1)^{-1}\{y\}$ is connected for every $y \in \partial \Y$.
\end{cor}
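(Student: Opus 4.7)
The plan is to simply assemble the three previously established results. First I would invoke Theorem \ref{thm:hopf_harm_lipschitz} to upgrade the hypotheses: since $h \in W^{1,2}(\D, \C)$ solves the Hopf-Laplace equation with $J_h \geq 0$ and the Hopf product $\varphi = (\partial_z h)\overline{(\partial_{\overline{z}} h)}$ is holomorphic (hence locally bounded and smooth), Theorem \ref{thm:hopf_harm_lipschitz} gives that $h$ is locally Lipschitz in $\D$. In particular, $h \in C(\D, \C) \cap W^{1,2}(\D, \C)$ and $h$ satisfies the Lusin (N)-condition. Hence all the structural hypotheses of Lemma \ref{lem:degree_lemma_interior} and Corollary \ref{cor:monotone_for_points_mapped_into_bdry} are met.

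Next, I would apply Lemma \ref{lem:zero_Jacobian_trajectory_lemma} to verify the extra no-zero-Jacobian-component hypothesis shared by Lemma \ref{lem:degree_lemma_interior} and Corollary \ref{cor:monotone_for_points_mapped_into_bdry}: for every $y \in \C$ and $r > 0$, there is no non-empty connected component $U$ of $h^{-1}B^2(y, r)$ on which $J_h \equiv 0$ almost everywhere. This is precisely the assertion of that lemma, and its hypotheses (solution of the Hopf-Laplace equation, $J_h \geq 0$, Sobolev boundary values given by an orientation-preserving homeomorphism onto a Lipschitz Jordan domain) coincide with ours.

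Finally, with the extra hypothesis verified, Lemma \ref{lem:degree_lemma_interior} directly yields that $\Y \subset h(\D)$, that $h^{-1}\{y\} = \emptyset$ for every $y \in \C \setminus \overline{\Y}$ (which gives $h(\D) \subset \overline{\Y}$), and that $h^{-1}\{y\}$ is connected for every $y \in \Y$. Corollary \ref{cor:monotone_for_points_mapped_into_bdry} then provides the remaining statement that $h^{-1}\{y\} \cup (h\vert\S^1)^{-1}\{y\}$ is connected for every $y \in \partial \Y$. Since the genuine mathematical content sits in the three preceding results, there is no real obstacle at this stage; the only point demanding any care is checking that $h(\D) \subset \overline{\Y}$ follows correctly from the emptiness of $h^{-1}\{y\}$ outside $\overline{\Y}$, which is immediate.
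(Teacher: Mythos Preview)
Your proposal is correct and follows essentially the same approach as the paper, which states that the corollary follows immediately by combining Lemmas \ref{lem:degree_lemma_interior} and \ref{lem:zero_Jacobian_trajectory_lemma} with Corollary \ref{cor:monotone_for_points_mapped_into_bdry}. Your write-up is simply a more explicit unpacking of this, including the (correct) observation that Theorem \ref{thm:hopf_harm_lipschitz} supplies the interior continuity and Lusin (N)-condition needed to feed into Lemma \ref{lem:degree_lemma_interior} and Corollary \ref{cor:monotone_for_points_mapped_into_bdry}.
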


\section{Continuity up to the boundary}\label{Section_cont}

The last remaining part of the proof is to show that our solution is continuous up to the boundary. At this part we no longer require the Hopf-Laplace equation, so we formulate the statement for a more general class of mappings.

\begin{prop}\label{prop:general_boundary_extension}
	Let $\Y$ be a Lipschitz Jordan domain in $\C$, and let $h \colon \D \to \Y$ be a continuous surjection. Suppose that $h \in W^{1,2}(\D, \C)$, and that $h - f \in W^{1,2}_0(\D, \C)$ for some homeomorphic $f \colon \overline{\D} \to \overline{\Y}$. Suppose also that the set $h^{-1}\{y\} \cup (f\vert\partial\D)^{-1} \{y\}$ is connected for every $y \in \overline{\Y}$. Then extending $h$ to the boundary via $h\vert\partial\D = f\vert\partial\D$ yields a continuous map $h \colon \overline{\D} \to \overline{\Y}$.
\end{prop}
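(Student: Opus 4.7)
I will argue by contradiction: suppose $\hat h$ is discontinuous at some $z_0 \in \partial \D$. Since $f\vert\partial\D$ is continuous and agrees with $\hat h$ there, the discontinuity must come from an interior sequence, so there exist $z_n \in \D$ with $z_n \to z_0$ and $h(z_n) \to y_\infty$ for some $y_\infty \neq y_0 := f(z_0)$. Set $\delta = |y_\infty - y_0| > 0$.

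The core strategy is to invoke Lemma~\ref{lem:capacity_to_topology_lemma} with a connected set $C \subset \D$ that contains some $z_n$, has $\overline{C} \cap \partial\D$ nonempty but contained in a small arc around $z_0$ (on which $f$ is close to $y_0$ by continuity), and has $h(C)$ trapped inside a small ball around $y_\infty$. If such a $C$ can be produced, the lemma supplies some $w \in \overline{C} \cap \partial\D$ with $f(w) \in \overline{h(C)}$, forcing $|f(w) - y_0|$ and $|f(w) - y_\infty|$ to both be much smaller than $\delta$, contradicting $|y_0 - y_\infty| = \delta$. The natural candidate is $V_n$, the connected component of $B(z_0, \rho) \cap h^{-1}(B(y_\infty, r))$ containing $z_n$, for $\rho$ and $r$ small relative to $\delta$ and the modulus of continuity of $f$ at $z_0$.

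If $\overline{V_n} \cap \partial\D \neq \emptyset$ for some large $n$ the argument concludes immediately; the subtle case is when every $V_n$ is compactly contained in $\D$. Here the fiber-connectedness hypothesis enters decisively. Let $U_n$ be the component of $h^{-1}(B(y_\infty, r))$ containing $V_n$; for every $y' \in h(U_n)$ the connected fiber $h^{-1}\{y'\}$ must lie entirely inside $U_n$, which yields pairwise disjoint images $h(U_n)$ for distinct $U_n$. Since $z_n \to z_0 \in \partial\D$ prevents the $V_n$ from all coinciding with a single compactly contained set, a subsequential argument makes the $U_n$ pairwise distinct, producing infinitely many pairwise disjoint connected subsets $h(U_n) \subset B(y_\infty, r)$ whose members contain points $h(z_n)$ converging to $y_\infty$.

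The main obstacle will be converting this configuration into an outright contradiction. I anticipate combining Lemma~\ref{lem:capacity_to_topology_lemma} with a capacity/energy argument: each compactly contained $V_n$ forces $h$ to oscillate by at least order $r$ between $h(z_n)$ (close to $y_\infty$) and the values on $\partial V_n \cap h^{-1}(\partial B(y_\infty, r))$. Combined with Corollary~\ref{cor:capacity_goes_to_infinity} and the finiteness of $\int_\D |Dh|^2$, this should rule out the accumulation of infinitely many such disjoint components near a single boundary point $z_0$. Once pointwise boundary continuity is secured at every $z_0 \in \partial \D$, global continuity of $\hat h \colon \overline{\D} \to \overline{\Y}$ follows by a standard compactness argument.
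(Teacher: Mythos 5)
Your contradiction strategy at a boundary discontinuity point is a genuinely different route from the paper, which instead establishes the $K$-oscillation property for the extended map $H$ (via Lemmas~\ref{lem:connectedness_of_monotone_preimages} and~\ref{lem:weak_monotonicity}) and then invokes Lemma~\ref{lem:oscwm}. The first half of your argument is sound: when $\overline{V_n}\cap\partial\D\neq\emptyset$, Lemma~\ref{lem:capacity_to_topology_lemma} produces a boundary point $w\in\overline{V_n}\cap\S^1\subset\overline{B(z_0,\rho)}\cap\S^1$ with $f(w)\in\overline{h(V_n)}\subset\overline{B(y_\infty,r)}$, and choosing $\rho$ and $r$ small compared to $\delta=|y_0-y_\infty|$ indeed gives a contradiction.

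The case where every $\overline{V_n}$ is compactly contained in $\D$ has a genuine gap. First, your claim that the $U_n$ can be made pairwise distinct by a subsequential argument is not justified when $y_\infty\in\partial\Y$ (which is not excluded): there $\overline{B(y_\infty,r)}$ meets $f(\S^1)$, so Corollary~\ref{cor:capacity_to_topology_preimage} no longer forces $\overline{U_n}\subset\D$, and all the $V_n$ could well sit in a single component $U$ whose closure meets $\S^1$ far from $z_0$, a configuration your argument does not address. Second, even in the favorable situation with infinitely many pairwise distinct compactly contained $U_n$ and pairwise disjoint images $h(U_n)$, the envisioned ``capacity/energy argument'' does not obviously close the proof: the sets $U_n$ accumulate at $z_0$ and can shrink in diameter, so the oscillation of $h$ over each one (of order $r$) carries no uniform lower bound on $\int_{U_n}|Dh|^2$, and Corollary~\ref{cor:capacity_goes_to_infinity} is not directly applicable since the $U_n$ need not stretch across a fixed annulus. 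The tool that actually eliminates the multi-component configuration is Sierpi\'nski's theorem on covering a continuum by countably many disjoint nonempty compact sets, which the paper applies inside Lemma~\ref{lem:connectedness_of_monotone_preimages} to conclude that $h^{-1}U$ is connected whenever $U\subset\Y$ is open and connected; nothing in your write-up replaces this step. Until that reduction is made precise, the proposal is incomplete.
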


The claim is not true without the assumption that the pre-images of points are connected; see Example \ref{ex:2}. The argument is inspired by a proof that  a Sobolev homeomorphism $f \in W^{1,2}(\D, \Y)$ with Lipschitz $\Y$ extends continuously to the boundary, see \cite[Theorem 1.3]{Iwaniec-Onninen_boundary-behavior}. 

We begin with a lemma on pre-images of connected open sets. Recall that if $X$ and $Y$ are compact metric spaces and $f \colon X \to Y$ is a continuous monotone surjection, then a classical result of Whyburn states that $f^{-1}C$ is connected for every connected $C \subset Y$. For a proof, see e.g.\ \cite[Corollary 6.1.19]{Engelking_topology}. The compactness assumption is crucial; consider for example the map $h \colon (0,2)\times(0,2\pi + 1) \to \D$ defined by $h(x + iy) = \max(0, \abs{x} - 1) e^{i\min(y, 2\pi)}$ and $C = B^2(1/2, 1/5)$. Here, we require a version of this result for maps with Sobolev boundary values.

\begin{lemma}\label{lem:connectedness_of_monotone_preimages}
	Let $\Y$ be a Jordan domain in $\C$, and let $h \colon \D \to \Y$ be a continuous surjection. Suppose that $h \in W^{1,2}(\D, \C)$, and that $h - f \in W^{1,2}_0(\D, \C)$ for some homeomorphic $f \colon \overline{\D} \to \overline{\Y}$. Suppose also that the set $h^{-1}\{y\}$ is connected for every $y \in \Y$. Then for every connected open set $U \subset \Y$, the set $h^{-1} U$ is connected.
\end{lemma}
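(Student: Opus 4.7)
The plan is to first reduce the general case to showing connectedness of $h^{-1}(B^2(y_0, r))$ for small open balls $B^2(y_0, r) \subset \Y$ with $r < \dist(y_0, \partial \Y)$, and then to handle this ball case via Sierpi\'nski's classical theorem on countable decompositions of continua into disjoint non-empty closed sets. The reduction is straightforward: given $y_1, y_2 \in U$, I would chain them by a finite sequence of overlapping balls $B^2(y_i, r_i) \subset U$ with each $r_i < \dist(y_i, \partial \Y)$, so that overlapping balls give overlapping connected preimages, forcing the fibers over $y_1$ and $y_2$ into the same component of $h^{-1}U$.

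For the ball case, write $B := B^2(y_0, r)$. The first step is to show every connected component $W$ of $h^{-1}B$ satisfies $\overline{W} \subset \D$; this would follow from Lemma \ref{lem:capacity_to_topology_lemma} applied to $C = W$, using that $\overline{h(W)} \subset \overline{B}$ sits well inside $\Y$ while $f(\overline{W} \cap \S^1) \subset \partial \Y$, contradicting the conclusion of the lemma unless $\overline{W} \cap \S^1 = \emptyset$. Since the fibers $h^{-1}\{y\}$ with $y \in B$ are connected, each fiber lies in exactly one component, and I define $V_W := \{y \in B : h^{-1}\{y\} \subset W\}$, obtaining a countable partition of $B$ into the sets $\{V_W\}_W$ indexed by the components of $h^{-1}B$.

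The key topological step is then to prove that each $V_W$ is closed in $B$. Given $y_n \in V_W$ with $y_n \to y_* \in B$, I would pick $z_n \in h^{-1}\{y_n\} \subset W$, and use compactness of $\overline{W} \subset \D$ to extract a subsequence converging to some $z \in \overline{W}$, so that continuity of $h$ inside $\D$ gives $h(z) = y_*$. The case $z \in \partial W \cap \D$ is ruled out by observing that $h(\partial W \cap \D) \subset \partial B$ (since $W$ is a component of $h^{-1}B$) while $y_* \in B$; therefore $z \in W$, which makes the fiber over $y_*$ meet $W$, and hence lie entirely in $W$ by connectedness, so $y_* \in V_W$.

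To finish, for any $r_1 \in (0, r)$ the continuum $\overline{B^2(y_0, r_1)}$ is decomposed into the countable family of pairwise disjoint closed sets $\{V_W \cap \overline{B^2(y_0, r_1)}\}_W$. Sierpi\'nski's theorem then forces at most one of these sets to be non-empty, and since $y_0$ lies in the set corresponding to the component $U_r$ containing $h^{-1}\{y_0\}$, that set is necessarily $V_{U_r}$; letting $r_1 \nearrow r$ yields $h^{-1}B = U_r$. The main obstacle I anticipate is that the naive approach of proving $V_W$ is open (which would immediately contradict connectedness of $B$) fails: sequences $z_n$ drawn from distinct components $W_n$ of $h^{-1}B$ can accumulate on $\S^1$ without producing a direct contradiction, as we lack boundary continuity of $h$. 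Invoking Sierpi\'nski's theorem on compact sub-balls, where closedness of $V_W$ alone is sufficient, is precisely what sidesteps this difficulty.
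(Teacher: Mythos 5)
Your proposal is correct and follows essentially the same route as the paper: both reduce to the case of a ball compactly contained in $\Y$, partition a slightly smaller closed ball into countably many disjoint compact pieces indexed by the components of $h^{-1}B$, and invoke Sierpi\'nski's theorem on continua to force a single non-empty piece, then exhaust $B$ by increasing the sub-ball. The only cosmetic differences are that you phrase the partition as the sets $V_W \subset B$ rather than the images $h(W_i)$ of the compact preimage slices (these are the same sets), and you reduce the general connected open $U$ to the ball case by chaining overlapping balls rather than by the paper's separation argument; both reductions are valid.
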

\begin{proof}[Proof of Lemma \ref{lem:connectedness_of_monotone_preimages}]
	We consider first the special case where $U = B^2(y, r_0)$ is a ball with $\overline{U} \subset \Y$. Let $V_i$ denote the connected components of $h^{-1} U$; the continuity of $h$ implies that the sets $V_i$ are open subsets of $\D$, which in turn implies that there are only countably many $V_i$. Moreover, by Corollary \ref{cor:capacity_to_topology_preimage} and the fact that $f(\partial\D) \subset \partial\Y$, we have $\overline{V_i} \subset \D$.
	
	Let $K = \overline{B^2}(y, r)$ for some $r < r_0$. We define $W_i = h^{-1} K \cap V_i$ for every index $i$. We claim that every $W_i$ is compact. Indeed, we have $\overline{W_i} \subset \overline{V_i} \subset \D$, and therefore the continuity of $h$ on $\D$ implies that $h(\overline{W_i}) \subset \overline{h(W_i)} \subset K$. On the other hand, since $V_i$ is a component of $h^{-1} U$ with $\overline{V_i} \subset \D$, we have $h(\partial V_i) \subset \partial U \subset \C \setminus K$. Hence, $\overline{W_i} \cap \partial V_i = \emptyset$, and therefore $\overline{W_i} \subset V_i$. Since $W_i$ is a closed subset of $V_i$, it follows that $W_i = \overline{W_i}$, implying the compactness of $W_i$.
	
	It follows that the sets $h(W_i)$ are compact subsets of $K$. Since $h$ is surjective, it follows that the sets $h(W_i)$ cover $K$. We also have that $h(W_i)$ are pairwise disjoint, since every fiber $h^{-1}\{w\}$ with $w \in K$ is connected and therefore contained in a single $V_i$. We hence have a cover of the compact continuum $K \subset \C$ with a countable collection of pairwise disjoint compact sets $h(W_i)$. A theorem of Sierpi\'nski now implies that no more than one $h(W_i)$ can be non-empty; see e.g.\ \cite[Theorem 6.1.27.]{Engelking_topology}. It follows that $h^{-1} K$ intersects only one component $V_i$ of $h^{-1} U$.
	
	Now, we select an increasing sequence of radii $0 < r_1 < r_2 < \dots$ with $\lim_{j \to \infty} r_j = r_0$, and denote $K_j = \overline{B^2}(y, r_j)$. Our proof so far shows that every $h^{-1} K_j$ is contained in a single component of $h^{-1} U$. Since the sequence of $h^{-1} K_j$ is increasing, this component is the same for all $j$. However, since $h^{-1} U = \bigcup_j h^{-1} K_j$, it then follows that $h^{-1} U$ must have exactly one connected component.
	
	It then remains to consider the case when $U$ is not a ball that is compactly contained in $\Y$. Let again $V_i$ be the connected components of $h^{-1} U$. Using the surjectivity and the fact that every pre-image $h^{-1}\{z\}$ for $z \in U$ is connected, it follows similarly to before that $h(V_i)$ are disjoint and cover $U$. Suppose towards contradiction that there exists more than one $h(V_i)$. Then we must have $\partial h(V_1) \cap \partial(U \setminus h(V_1)) \cap U \neq \emptyset$; otherwise $\overline{h(V_1)} \cap U$ and $\overline{U \setminus h(V_1)} \cap U$ would yield a separation of $U$.
	
	Let thus $z \in \partial h(V_1) \cap \partial(U \setminus h(V_1)) \cap U$, and let $B$ be a ball centered at $z$ with $\overline{B} \subset U$.  Then $B$ meets both $h(V_1)$ and $U \setminus h(V_1)$. Since every connected component of $h^{-1} B$ is contained in a connected component of $h^{-1} U$, it follows that $h^{-1} B$ must have at least 2 components; at least one contained in $V_1$, and at least one not contained in $V_1$. This contradicts the previous case, as $\overline{B} \subset U$.
\end{proof}

The basic idea of the proof of Proposition \ref{prop:general_boundary_extension} is to apply Lemma~\ref{lem:oscwm}. For that, we extend $h \colon \D \to \Y$ to a Sobolev map $H\in W_{\loc}^{1,2} (\C, \C)$ which takes $\C \setminus \D$ homeomorphically onto $\C \setminus \Y$. In order to show that the extended map $H$ satisfies the $K$-oscillation property also up to the boundary of $\D$, we use the following lemma.

\begin{lemma}\label{lem:weak_monotonicity}
	Let $\Y$ be a Lipschitz Jordan domain in $\C$, and let $h \colon \D \to \Y$ be a continuous surjection. Suppose that $h \in W^{1,2}(\D, \C)$, and that $h - f \in W^{1,2}_0(\D, \C)$ for some homeomorphic $f \colon \overline{\D} \to \overline{\Y}$. Suppose also that the set $h^{-1}\{y\} \cup (f\vert\S^1)^{-1} \{y\}$ is connected for every $y \in \overline{\Y}$.
	
	Extend $h$ to $\overline{\D}$ by $h\vert\S^1 = f\vert\S^1$ Then, for every $z_0 \in \S^1$ and for almost every $r \in (0, 1/2)$, $h$ is continuous on the boundary of $D_r := B^2(z_0, r) \cap \D$, and
	\begin{equation}\label{eq:osc_estimate}
		\sup_{\omega \in D_r} |h(\omega) - z| \leq \sup_{\omega \in \partial D_r} |h(\omega) - z| \qquad \forall z \in \C.
	\end{equation}
\end{lemma}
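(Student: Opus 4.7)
The plan is to separate the continuity claim on $\partial D_r$ from the oscillation inequality. Fix $z_0 \in \S^1$ and decompose $\partial D_r = C_r \cup S_r$ where $C_r = \partial B^2(z_0, r) \cap \overline{\D}$ is the circular cutting arc and $S_r = \overline{B^2}(z_0, r) \cap \S^1$ is the boundary arc. For a.e.\ $r \in (0, 1/2)$, standard Sobolev theory (Fubini in polar coordinates around $z_0$) gives that $h|_{C_r \cap \D}$ is absolutely continuous; a further trace argument shows that for a.e.\ such $r$ the endpoint limits of $h$ along $C_r$ match the continuous boundary trace $f|_{\S^1}$ at the two corner points in $C_r \cap S_r$. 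Combined with continuity of $f|_{\S^1}$ on $S_r$, this yields continuity of $h$ on all of $\partial D_r$.

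For the oscillation bound, suppose towards contradiction that some $z \in \C$ and $\omega_0 \in D_r$ satisfy $|h(\omega_0) - z| > M := \sup_{\partial D_r} |h - z|$. Fix $M'' \in (M, |h(\omega_0) - z|)$, set $U := \{w \in \C : |w - z| > M''\}$, and let $V$ denote the connected component of $h^{-1} U$ in $\D$ containing $\omega_0$. The key step is to show that $\overline V$ is compactly contained in $D_r$. Since $|h - z| \le M < M''$ on $C_r \cap \D$, both $V$ and its topological boundary $\partial V \cap \D$ (on which $h$ takes values in $\partial U$ by continuity of $h$ in $\D$ and the definition of $V$ as a component) avoid $C_r \cap \D$, placing $V$ entirely in $D_r$. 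To forbid $\overline V$ from reaching $\S^1$, apply Lemma~\ref{lem:capacity_to_topology_lemma} to the connected set $V$: an intersection would require $f(\overline V \cap \S^1) \cap \overline{h(V)} \neq \emptyset$, yet $\overline V \cap \S^1 \subset S_r$ forces $f(\overline V \cap \S^1) \subset \overline{B^2}(z, M)$, disjoint from $\overline{h(V)} \subset \{|w - z| \ge M''\}$. Hence $\overline V \subset D_r$ compactly.

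To extract the contradiction, I use the connectedness of preimages. Since $h(\D) = \Y$, we have $h^{-1} U = h^{-1}(U \cap \Y)$; writing the open set $U \cap \Y = \bigsqcup_i U_i$ as its (open-in-$\C$) connected components, Lemma~\ref{lem:connectedness_of_monotone_preimages} gives that every $h^{-1} U_i$ is connected, so these are precisely the components of $h^{-1} U$. Thus $V = h^{-1} U_1$ and $h(V) = U_1$, where $U_1$ is the component of $U \cap \Y$ containing $h(\omega_0)$. Since $h(\partial V) \subset \partial U = \{|w - z| = M''\}$ while $\sup_V |h - z| > M''$, the continuous function $|h - z|$ attains its maximum $M'''$ on the compactum $\overline V$ at some $\omega_* \in V$, with $w_* := h(\omega_*) \in U_1$. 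Openness of $U_1$ in $\C$ yields a point $w' \in U_1$ with $|w' - z| = M''' + \eta > M'''$ for small $\eta > 0$ (move outward along the ray from $z$); by $h(V) = U_1$ there is $\omega' \in V$ with $h(\omega') = w'$, contradicting the maximality of $M'''$.

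The main obstacle is the compact-containment step: preventing the ``bad component'' $V$ from leaking to $\S^1$. This is precisely where the Sobolev boundary hypothesis enters, via the capacity-based Lemma~\ref{lem:capacity_to_topology_lemma}; the continuity half of the statement reduces to classical trace theory, and once compact containment is secured the final contradiction through Lemma~\ref{lem:connectedness_of_monotone_preimages} and surjectivity $h(\D) = \Y$ is essentially direct.
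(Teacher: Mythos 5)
Your proof is correct, and for the oscillation estimate~\eqref{eq:osc_estimate} you take a genuinely different route than the paper, though both arguments rest on the same two lemmas (Lemma~\ref{lem:connectedness_of_monotone_preimages} and Lemma~\ref{lem:capacity_to_topology_lemma}). The paper sets $\Gamma = h(\partial D_r)$, decomposes $\Y \setminus \Gamma$ into components, and proves the key claim that a component $S$ meeting $h(D_r)$ has $\partial S \subset \Gamma$; the case $\tau \in \partial S \cap \partial\Y$ is ruled out in two sub-cases, one of which explicitly invokes the hypothesis that $h^{-1}\{\tau\} \cup (f\vert\S^1)^{-1}\{\tau\}$ is connected for boundary points $\tau$. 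Your argument instead works on the domain side: you take the far annulus $U = \{\smallabs{w - z} > M''\}$, pick the component $V$ of $h^{-1}U$ containing $\omega_0$, establish that $\overline V$ is compactly contained in $D_r$ (the circular-arc part via the pointwise bound on $C_r$, the $\S^1$ part via Lemma~\ref{lem:capacity_to_topology_lemma}), identify $V = h^{-1}U_1$ with $U_1$ an open component of $U \cap \Y$, and then push outward from the maximum of $\smallabs{h - z}$ on $\overline V$ to contradict maximality. This is arguably more direct: it avoids the two-case topological analysis of $\partial S$, and it only ever invokes the fiber-connectedness hypothesis through Lemma~\ref{lem:connectedness_of_monotone_preimages}, i.e.\ for $y \in \Y$, whereas the paper also uses the boundary case $y \in \partial\Y$. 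What the paper's formulation buys is a uniform ``each good component $S$ is completely walled in by $\Gamma$'' statement that gives the oscillation bound for any $\omega_0 \in D_r$ and all test points $z$ at once, and handles boundary-valued $h(\omega_0)$ symmetrically; your approach handles these concerns via the $M'' > M$ buffer and the compact containment of $V$, which is cleaner but perhaps less geometrically transparent. For the continuity of $h$ on $\partial D_r$ you indicate essentially the paper's argument (AC on a.e.\ circle after a suitable Sobolev extension, with a measure-zero cleanup to make the circular limits match the trace); this part is a bit terse but the standard argument does work.
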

\begin{proof}
	The set $D_r$ is a Jordan domain whose boundary is the union of a circular arc inside $\D$ and a circular arc on $\partial \D$. Let $a,b \in \partial \D$ denote the intersection points of $\partial B^2(z_0,r)$ and $\partial \D$. 

	We then show that the map $h$ satisfies
	\begin{equation}\label{eq:circular_limits}
		\lim_{\substack{z \to a \\ z \in \partial B^2(z_0,r) \cap \D}} h(z) = f(a) \quad \text{ and } \quad \lim_{\substack{z \to b \\ z \in \partial B^2(z_0,r) \cap \D}} h(z) = f(b)
	\end{equation}
	for almost every $r \in (0, 1/2)$. Since $h$ is continuous inside $\D$ and $h\vert\S^1 = f\vert\S^1$ is continuous on $\S^1$, this will imply that $h$ is continuous on $\partial D_r$ for such $r$. In order to show \eqref{eq:circular_limits}, we select a homeomorphic extension $F: \C \to \C$ of $f$ such that $F \in W^{1,2}_{\loc}(\C)$, which we may do as discussed in Remark \ref{rem:Sobolev_extension}. We now define a map $H: \C \to \C$ by
	\[
	H(z) = \begin{cases} 
		h(z) & z\in \D \\
		F(z) & z \in \C \setminus \D
	\end{cases}\quad .
	\]
	Since $h-f \in W^{1,2}_0(\D, \C)$, the map $H$ is in $W^{1,2}_\loc(\C, \C)$. It follows that, after redefining $H$ in a set $S$ of measure zero, $H$ is absolutely continuous on $\partial B^2(z_0,r)$ for a.e.\ $r \in (0, 1/2)$.
	
	We also have for a.e.\ $r \in (0, 1/2)$ that $S \cap \partial B^{2}(x, r)$ has zero 1-dimensional measure. In such a case, if $H$ is absolutely continuous on $\partial B^2(z_0,r)$, then we must in fact have $S \cap \partial B^{2}(x, r) = \emptyset$. Indeed, if $H$ was redefined on $w \in S \cap \partial B^{2}(x, r) \cap (\C \setminus \D)$, then since $F$ is continuous, $H$ can be continuous on $\partial B^{2}(x, r)$ only if $S$ also contains some neighborhood of $w$ in $\partial B^{2}(x, r) \cap (\C \setminus \D)$. The same holds for $w \in \D$ due to continuity of $f$. Consequently, our original $H$ is continuous on $\partial B^2(z_0,r)$ for almost every $r \in (0, 1/2)$, which implies that \eqref{eq:circular_limits} holds.
	
	It remains then to prove the oscillation estimate \eqref{eq:osc_estimate}. To this end, let $\Gamma = h(\partial D_r)$ denote the image curve of $\partial D_r$ so that $\Gamma \subset \overline{\Y}$. The curve $\Gamma$ is compact and divides $\Y$ into a (possibly infinite) number of open connected components whose collection we denote by $\mathcal{Y}$. We now prove the following.
	\\\\
	\textbf{Claim.} Let $S \in \mathcal{Y}$ be a connected component of $\Y \setminus \Gamma$ and $\omega_0 \in D_r$. Suppose that $h(\omega_0) \in S$. Then $\partial S \subset \Gamma$.\\\\
	\emph{Proof of claim.} Suppose to the contrary that there exists a point $\tau \in \partial S$ with $\tau \notin \Gamma$. Since $S$ is a connected component of $\Y \setminus \Gamma$ we have $\partial S \subset \Gamma \cup \partial \Y$ and thus we must have $\tau \in \partial \Y$. Note now that since $S$ is an open and connected subset of $\Y$, Lemma \ref{lem:connectedness_of_monotone_preimages} yields that $h^{-1}(S)$ is also open and connected. Furthermore, $h^{-1}(S)$ does not intersect $\partial D_r$ by the definition of $\mathcal{Y}$ and thus $h^{-1}(S)$ must entirely lie either in $D_r$ or $\D \setminus \overline{D_r}$. But since $h(\omega_0) \in S$ where $\omega_0 \in D_r$ we must have that $h^{-1}(S) \subset D_r$.
	
	Since $\tau \notin \Gamma$ and since $\Gamma$ is compact, we may choose $\epsilon > 0$ such that $B^2(\tau,2\epsilon) \cap \Gamma = \emptyset$. Let $S_\epsilon$ be a connected component of $\Y \cap B^2(\tau,\epsilon)$ for which $\tau \in \overline{S_\epsilon}$. The set $S_\epsilon$ is open as it is a connected component of an open set. Moreover, since $B^2(\tau,\epsilon) \cap \Gamma = \emptyset$ we must have that $S_\epsilon$ is contained in a single connected component of $\Y \setminus \Gamma$ and because $\tau \in \partial S$ we must also have that $S_\epsilon \subset S$. By Lemma \ref{lem:connectedness_of_monotone_preimages}, the preimage of $S_\epsilon$ under $h$ is now an open and connected subset of $D_r$. We now consider two cases.
	\\\\
	\emph{Case 1.} If $\overline{h^{-1}(S_\epsilon)} \subset \D$.\\\\
	In this case, pick any sequence $(y_n) \subset S_\epsilon$ for which $\lim_{n\to\infty} y_n = \tau$. By the surjectivity of $h$, we may select a sequence of points $(x_n)$ with $x_n \in h^{-1}(y_n)$. Then $x_n \in h^{-1}(S_\epsilon)$ for each $n$ and we may choose a subsequence of $(x_n)$ which converges to a point $x \in \overline{h^{-1}(S_\epsilon)}$. Moreover, due to the choice of sequence $(y_n)$ and the continuity of $h$ inside $\D$ we must have that $h(x) = \tau$. But $\tau$ must also be the image of some point $x' \in \partial \D$ under the boundary map $f \vert \S^1 = h \vert \S^1$. Since the preimage $h^{-1}(\tau)$ of $\tau$ under $h$ must be a connected set by our assumptions, we find that $h^{-1}(\tau)$ must intersect $\partial D_r$, as otherwise the sets $D_r$ and $\C \setminus D_r$ would yield a separation of it. But this is a contradiction as now $\tau \in h(\partial D_r) = \Gamma$.\\\\
	\emph{Case 2.} If $\overline{h^{-1}(S_\epsilon)} \cap \partial \D \neq \emptyset$.\\\\
	Now $h^{-1}(S_\epsilon)$ is a connected subset of $\D$ whose closure meets $\S^1$. It follows by Lemma \ref{lem:capacity_to_topology_lemma} that $f(\overline{h^{-1}(S_\epsilon)}\cap\S^1) \cap \overline{h(h^{-1}S_\epsilon))} \neq \emptyset$. However, since $S_\epsilon \subset D_r$, we have $f(\overline{h^{-1}(S_\epsilon)}\cap\S^1) \subset f(\partial D_r \cap \S^1) \subset \Gamma$, and since $S_\epsilon$ is a component of $B^2(\tau,\epsilon)$, we have $\overline{h(h^{-1}S_\epsilon))} \subset \overline{S_\epsilon} \subset B^2(\tau, 2\epsilon)$. Since we chose $\epsilon$ so that $B^2(\tau, 2\epsilon) \cap \Gamma = \emptyset$, we have reached a contradiction.	
	\\\\
	Returning to the proof of \eqref{eq:osc_estimate}, we have now shown that if any point $\omega_0 \in D_r$ is mapped inside $\Y$ then it is contained in an open and connected set whose boundary is a subset of $h(\partial D_r)$. This means that for such $\omega_0$ the quantity $|h(\omega_0) - z|$ for fixed $z \in \C$ may always be increased by replacing $\omega_0$ with some point on $\partial D_r$.
	
	It remains to consider the case when $h(\omega_0) \in \partial \Y$. But in this case we can repeat the final argument of Case 1 above: The preimage of $h(\omega_0)$ under $h$ must be a connected set, and since it contains both $\omega_0$ and a point on $\partial\D$, it must also intersect $\partial D_r$. This proves \eqref{eq:osc_estimate}.
\end{proof}

It then remains to complete the proof of Proposition \ref{prop:general_boundary_extension}.

\begin{proof}[Proof of Proposition \ref{prop:general_boundary_extension}]
	Again, as discussed in Remark \ref{rem:Sobolev_extension}, we can select a homeomorphic extension $F: \C \to \C$ of $f$ such that $F \in W^{1,2}_{\loc}(\C, \C)$, and then define a map $H \in W^{1,2}_\loc(\C, \C)$ by
	\[
	H(z) = \begin{cases} 
		h(z) & z\in \D \\
		F(z) & z \in \C \setminus \D
	\end{cases}\quad .
	\]
	Let $z \in \partial \D$ and $r \in (0, 1/2)$. We wish to show that for almost every such $r$ the estimate
	\[\underset{B^2(z,r)}{\osc} H \leq C\underset{\partial B^2(z,r)}{\osc} H\]
	is valid, where $C$ does not depend on $z$ or $r$. Let $D_r = B^2(z,r) \cap \D$ and $D_r' = B^2(z,r) \setminus \overline{\D}$. Now, the estimate \eqref{eq:osc_estimate} of Lemma \ref{lem:weak_monotonicity} is valid for almost all of our $r$, and implies for such $r$ that
	\[\sup_{\omega \in D_r} |H(\omega) - z| \leq \sup_{\omega \in \partial D_r} |H(\omega) - z| \qquad \forall z \in \C.\]
	Furthermore, since $H$ is a homeomorphism on $D_r'$ we find analogously that
	\[\sup_{\omega \in D_r'} |H(\omega) - z| \leq \sup_{\omega \in \partial D_r'} |H(\omega) - z| \qquad \forall z \in \C.\]
	Combining the above two estimates yields
	\begin{equation}\label{eq:combined_osc_estim}
		\underset{B^2(z,r)}{\osc} H \leq \underset{\partial D_r \cup \partial D_r'}{\osc} H.
	\end{equation}
	Let $\Psi : \C \to \C$ be a bilipschitz map which takes the boundary curve $H(\partial \D \cap B^2(z,r))$ onto a line segment of equal length. Such a map exists because $\Y$ is a Lipschitz domain, and the bilipschitz constant $C_\Psi$ of $\Psi$ is controlled by a constant only depending on $\Y$. 
	
	Let $\omega_-$ and $\omega_+$ denote the two endpoints of the circular arc $\partial \D \cap B^2(z,r)$. Note now that since $\Psi(H(\gamma))$ is a line segment we have that
	\[\sup_{\omega \in \partial \D \cap B^2(z,r)}|\Psi(H(\omega)) - z| \leq \max(|\Psi(H(\omega_-)) - z|, |\Psi(H(\omega_+)) - z|)\qquad \forall z \in \C.\]
	This implies that the maximal oscillation of $\Psi(H(\omega))$ on the set $\partial D_r \cup \partial D_r'$, which is a union of $\partial B^2(z,r)$ and the circular arc between $\omega_-$ and $\omega_+$, is found on $\partial B^2(z,r)$. In other words, we may combine the above estimate with \eqref{eq:combined_osc_estim} to find that
	\begin{align*}
		\underset{B^2(z,r)}{\osc} H &\leq \underset{\partial D_r \cup \partial D_r'}{\osc} H
		\leq C_\Psi \underset{\partial D_r \cup \partial D_r'}{\osc} \Psi \circ H
		\\&\leq C_\Psi \underset{\partial B^2(z,r)}{\osc} \Psi \circ H
		\leq C_\Psi^2 \underset{\partial B^2(z,r)}{\osc} H.
	\end{align*}
	This is exactly what we want as $C_\Psi$ only depends on $\Y$. Thus, Proposition~\ref{prop:general_boundary_extension} follows from Lemma~\ref{lem:oscwm}.
\end{proof}

With the proof of Proposition \ref{prop:general_boundary_extension} complete, we have also completed the proof of Theorem \ref{thm:hopf_monotonicity_upgraded}, and consequently also the proof of our main result, Theorem \ref{thm:main}.

\section{Counterexamples}\label{sect:counterexamples}

In this section we provide justification to Example \ref{ex:1} and \ref{ex:2} by constructing the required maps.

We start by defining types of auxiliary maps which will be used in both constructions. The first map, denoted by $G_{\omega,s,a}$ for $\omega \in \C$ and $s,a > 0$, is defined as a map of the disk $\overline{B(\omega,2s)}$ to itself by the following formula
 \begin{equation*}
 G_{\omega,s,a}(\omega + r e^{i\theta}) = \begin{cases} \omega + 2(r-s)e^{i\theta} & \qquad  \textnormal{for } s \leq r \leq 2s \\
 \omega + \frac{a (s - r)}{s}   & \qquad  \textnormal{for } r < s
 \end{cases}.
 \end{equation*}
Thus, $G_{\omega,s,a}$ is the identity mapping on the boundary, maps the set $\overline{B(\omega,2s)}\setminus \overline{B(\omega,s)}$ homeomorphically into the punctured disk $\overline{B(\omega,2s)}\setminus \{\omega\}$, and maps the set $\overline{B(\omega,s)}$ onto the closed line segment from $\omega$ to $\omega + a$. This map notably also has a nonnegative Jacobian and is Lipschitz continuous.

For a given $\epsilon > 0$ satisfying $\epsilon < s$, we also define another variant $G_{\omega,s,a,\epsilon}$ of the above map by
 \begin{equation*}
	G_{\omega,s,a,\epsilon}(\omega + r e^{i\theta}) = \begin{cases} \omega + 2(r-s)e^{i\theta} & \qquad  \textnormal{for } s \leq r \leq 2s \\
		\omega + \frac{\log(s/r)}{\log(s/\epsilon)} a   & \qquad  \textnormal{for } \epsilon < r < s\\
		\omega + a  & \qquad  \textnormal{for } r \leq \epsilon
	\end{cases}.
\end{equation*}
This mapping is topologically similar to the previous one but maps the circles $S(\omega,r)$ for $r < s$ to the line segment logarithmically instead of linearly. Due to this, we may calculate that

\begin{align*}
	\int_{B(\omega,2s)} |D G_{\omega,s,a,\epsilon}(z)|^2 \, dz &\leq C_1 s^2 + 2\pi \int_{\epsilon}^s \frac{\abs{a}^2}{r^2 \log^{2} (s/\epsilon)} r dr
	\\
	&= C_1 s^2 + \frac{2\pi\abs{a}^2}{\log^{2} (s/\epsilon)} \int_{\epsilon}^s \frac{1}{r} dr\\
	&= C_1 s^2 + \frac{2\pi\abs{a}^2}{\log (s/\epsilon)}.
\end{align*}

\begin{proof}[Proof of Example \ref{ex:2}] We choose $G(0,1/2,1)$ as our map. This is a mapping of $\D$ to itself, it belongs to $\mathcal A^2_{\id} (\overline{\D}, \C)$, and the preimage of any point on $(0,1)$ is the union of a single point in $\D \setminus \overline{B(0,1/2)}$ and a circle with center $0$ and radius in $(0,1/2)$. Our claim follows.
\end{proof}

\begin{proof}[Proof of Example \ref{ex:1}] We start with the identity mapping on $\overline{\D}$. We then choose a sequence of disjoint disks $B(\omega_k,2s_k) \subset \D$ for $k = 1,2,\ldots$ which get gradually smaller and $\lim_{k \to \infty} \omega_k = 1$. We select for example $\omega_k = 1 - 1/k$ and $s_k = 10^{-k}$. Then on each $B(\omega_k,2s_k)$, we replace our identity mapping with the map $G_{\omega_k,s_k,2,\epsilon_k}$, where for example $\epsilon_k = e^{-k^4}s_k$; i.e.\ we define a map $G : \overline{\D} \to \overline{\D}$ by
 \begin{equation*}
 G(z) = \begin{cases} G_{\omega_k,s_k,2,k^{-1}}(z) & \qquad  \textnormal{for } z \in B(\omega_k,2s_k),\, k = 1,2,\ldots \\
 z   & \qquad  \textnormal{otherwise}
 \end{cases}.
 \end{equation*}
This mapping is continuous in $\D$ but cannot be continuous up to the boundary because it maps each of the centers $\omega_k$ to the point $\omega_k + 2$ with real part larger than $2$. Furthermore,
\begin{align*}
	\norm{G}_{W^{1,2}(\D,\C)} &\leq C_0 + \sum_{k=1}^\infty \norm{DG_{\omega_k,s_k,2,\epsilon_k}(z)}_{L^2(B(\omega_k,2s_k))} \\
	&\leq C_0 + \sum_{k=1}^\infty \sqrt{C_1 10^{-2k} + \frac{8\pi}{\log {e^{k^4}}}}
	\\&\leq C_0 + C_2\sum_{k=1}^\infty\left( 10^{-k} + \frac{1}{k^2}\right) < \infty.
\end{align*}
Thus, $G$ belongs to the Sobolev class $W^{1,2}(\D,\C)$. The trace of $G$ on the boundary must also equal the identity mapping, since e.g.\ $\lim_{z\to \omega} G(z) = \omega$ for all $\omega \in \partial \D \setminus \{1\}$. Hence, $G$ satisfies all the claimed properties.
\end{proof}



\begin{thebibliography}{0}

\bibitem{AN}
G. Alessandrini and V. Nesi, \textit{Invertible harmonic mappings, beyond Kneser}, Ann. Sc. Norm. Super. Pisa Cl. Sci. (5) 8 (2009), no. 3, 451--468.


\bibitem{Anb}
S. S.  Antman, \textit{Nonlinear problems of elasticity. Applied Mathematical Sciences}, 107. Springer-Verlag, New York, 1995.



\bibitem{AIMb}
K. Astala, T. Iwaniec, and G. Martin, \textit{Elliptic partial differential equations and quasiconformal mappings in the plane}, Princeton University Press, 2009.


\bibitem{Bac}
J. M. Ball, \textit{Convexity conditions and existence theorems in nonlinear elasticity},  Arch. Rational Mech. Anal. {63} (1976/77), no. 4, 337--403.

\bibitem{Bainv}
J.M. Ball. \textit{Global invertibility of Sobolev functions and the interpenetration of matter},  
Proc. Roy. Soc. Edinburgh Sect. A 88 (1981), no. 3-4, 315--328. 

\bibitem{BP}
P. Bauman and D. Phillips, \textit{Univalent minimizers of polyconvex functionals in two dimensions},
Arch. Rational Mech. Anal. 126 (1994), no. 2, 161--181. 


\bibitem{Ch}
G. Choquet, {\em Sur un type de transformation analytique
g\'{e}n\'{e}ralisant la repr\'{e}sentation conforme et d\'{e}finie
au moyen de fonctions harmoniques,} Bull. Sci. Math., {\bf 69},
(1945), 156-165.

\bibitem{Cib}
P. G. Ciarlet, \textit{Mathematical elasticity Vol. I. Three-dimensional elasticity}, Studies in Mathematics and its Applications, 20. North-Holland Publishing Co., Amsterdam, 1988.

\bibitem{Cob}
R. Courant, \textit{Dirichlet's principle, conformal mapping, and minimal surfaces},
With an appendix by M. Schiffer.  Springer-Verlag, New York-Heidelberg, 1950.


\bibitem{CIKO}
J. Cristina, T. Iwaniec, L. V. Kovalev, and J. Onninen, \textit{The Hopf-Laplace equation: harmonicity and regularity}, Ann. Sc. Norm. Super. Pisa Cl. Sci. (5) {\bf 13} (2014), no. 4, 1145--1187.


\bibitem{Do}
J. Douglas,   {\em Solution of the problem of Plateau},  Trans. Amer. Math. Soc. {\bf 33} (1931) 231--321.

\bibitem{Engelking_topology}
R.~Engelking,  {\em General topology}, Heldermann Verlag Berlin, 1989.




\bibitem{Fonseca-Gangbo-book}
I. Fonseca and W. Gangbo, {\em Degree theory in analysis and applications},  Oxford Lecture Series in Mathematics and its Applications, 2. Oxford Science Publications. The Clarendon Press, Oxford University Press, New York, (1995). 


\bibitem{GS}
P. R. Garabedian and M.  Schiffer, {\em Convexity of domain functionals},
J. Analyse Math. 2 (1953), 281--368. 

\bibitem{GMS}
M. Giaquinta,  G. Modica, and J. Sou\v cek, \textit{A weak approach to finite elasticity}
Calc. Var. Partial Differential Equations 2 (1994), no. 1, 65--100. 



\bibitem{Heinonen-Kilpelainen-Martio_book}
J.~Heinonen, T.~Kilpel{\"a}inen, and O.~Martio, {\em Nonlinear potential theory of degenerate elliptic equations}, Dover, 2006.


\bibitem{HKb}
S. Hencl and P.  Koskela,  {\em Lectures on mappings of finite distortion},  Lecture Notes in Mathematics, 2096. Springer, Cham, (2014).

\bibitem{HPb}
A. Henrot and M. Pierre, {\em 
Variation et optimisation de formes},  Math\'ematiques \& Applications [Mathematics \& Applications], 48. Springer, Berlin, 2005. 


\bibitem{Heb}
D. Henry, {\em Perturbation of the boundary in boundary-value problems of partial differential equations},   London Mathematical Society Lecture Note Series, 318. Cambridge University Press, Cambridge, 2005.


\bibitem{Ho}
H. Hopf,  \textit{Differential geometry in the large}, Notes taken by Peter Lax and John Gray. With a preface by S. S. Chern. Lecture Notes in Mathematics, 1000. Springer-Verlag, Berlin, 1983.


\bibitem{IKOhopf}
T. Iwaniec, L. V. Kovalev, and J. Onninen, \textit{Hopf differentials and smoothing Sobolev homeomorphisms}, Int. Math. Res. Not. IMRN, {\bf 2012} (2012), no. 14, 3256--3277.

\bibitem{Iwaniec-Kovalev-Onninen_Duke}
T. Iwaniec, L. V. Kovalev, and J. Onninen, \textit{Lipschitz regularity for inner-variational equations}, Duke Math. J. 162 (2013), no. 4, 643--672.

\bibitem{IMb}
T. Iwaniec and G. Martin,  \textit{Geometric Function Theory and Non-linear Analysis}, Oxford Mathematical Monographs, Oxford University Press, (2001).


\bibitem{Iwaniec-Onninen_boundary-behavior}
T. Iwaniec  and J. Onninen,  \textit{Deformations of finite conformal energy: boundary behavior and limit theorems}, Trans. Amer. Math. Soc. 363 (2011), no. 11, 5605--5648.

\bibitem{IOinv}
T. Iwaniec  and J. Onninen,  \textit{Invertibility versus Lagrange equation for traction free energy-minimal deformations}, Calc. Var. Partial Differential Equations 52 (2015), no. 3-4, 489--496

\bibitem{IOmono}
T. Iwaniec  and J. Onninen,  \textit{Monotone Sobolev mappings of planar domains and surfaces}, Arch. Ration. Mech. Anal. {\bf 219} (2016), no. 1, 159--181.


\bibitem{Iwaniec-Onninen_Hopf-Harmonics}
T. Iwaniec  and J. Onninen,  \textit{ Monotone Hopf-Harmonics},  Arch. Ration. Mech. Anal. 237 (2020), no. 2, 743--777.


\bibitem{IVV}
T. Iwaniec, G. Verchota and A. Vogel, {\em The Failure of Rank-One Connections}, Arch. Rational Mech. Anal. 163 (2002), 125--169.

\bibitem{KSb}
A. M. Khludnev, and J. Sokolowski, {\em Modelling and control in solid mechanics},  
International Series of Numerical Mathematics, 122. Birkh\"auser Verlag, Basel, 1997.



\bibitem{Koski-Onninen_Sobolev-homeo}
A. Koski  and  J. Onninen, \textit{Sobolev homeomorphic extensions}, arXiv:1812.02811, J. Eur. Math. Soc., (JEMS), to appear. 

\bibitem{Ma}
J. J. Manfredi, \textit{Weakly monotone functions},  J. Geom. Anal. 4 (1994), no. 3, 393--402.


\bibitem{MO}
G. H. Meisters and C. Olech, \textit{Locally one-to-one mappings and a classical theorem on schlicht functions},  
Duke Math. J. 30 (1963), 63--80. 

\bibitem{Mor}
C. B. Morrey,  \textit{The Topology of (Path) Surfaces}, Amer. J. Math. {\bf 57} (1935), no. 1, 17--50.


\bibitem{MST}
S.  M\"uller,  S. Spector,  and Q.  Tang, {\em Invertibility and a topological property of Sobolev maps}, 
SIAM J. Math. Anal. 27 (1996), no. 4, 959--976.
 


\bibitem{Nirenberg-Brezis_BMO-Degree-boundary}
L.~Nirenberg and H.~Brezis,  Degree theory and {BMO}; part {II}: Compact manifolds with
  boundaries,  {\em Sel. Math. New Ser.}, 2(3):309--368, 1996.

\bibitem{Re}
 Yu. G. Reshetnyak,  {\em Space Mappings with Bounded Distortion},  (Russian). Sibirsk. Mat. Z. 8, (1967) 629--658.
 
 \bibitem{Reb}
Y. G. Reshetnyak, \textit{Space mappings with bounded distortion}, American Mathematical Society, Providence, RI, 1989.

\bibitem{Rib} S. Rickman,  \textit{Quasiregular mappings},  Springer-Verlag, Berlin,
1993.

\bibitem{SSe}
E. Sandier and S.  Serfaty,  \textit{Limiting vorticities for the Ginzburg-Landau equations}, Duke Math. J. {\bf 117} (2003), no.~3, 403--446.

\bibitem{Strebel_quadratic_differentials}
K.~Strebel, {\em Quadratic differentials}, Springer, 1984.


 \bibitem{Ta}
A. Taheri,  \textit{Quasiconvexity and uniqueness of stationary points in the multi-dimensional calculus of variations}, Proc. Amer. Math. Soc. {131} (2003), no.~10, 3101--3107.

\bibitem{Sv}
V. \v Sver\'ak,  \textit{Regularity properties of deformations with finite energy}, Arch. Rational Mech. Anal. {100} (1988), no. 2, 105--127.

\bibitem{Vuorinen_book}
M.~Vuorinen, {\em Conformal geometry and quasiconformal mappings}, Springer, 1988.
 
 \bibitem{We}
A. Weinstein, \textit{A global invertibility theorem for manifolds with boundary},
Proc. Roy. Soc. Edinburgh Sect. A 99 (1985), no. 3-4, 283--284. 
 
 \bibitem{Youngs}
J. W. T. Youngs, \textit{Homeomorphic approximations to monotone mappings}
Duke Math. J. 15, (1948). 87--94. 
 
\end{thebibliography}

\end{document}